\documentclass{amsart}
\usepackage[margin=1.25in,dvips]{geometry}

\usepackage{cite}
\usepackage[numbers]{natbib}
\usepackage{amssymb,mathrsfs,graphicx}
\usepackage{amsmath,amsthm}
\usepackage{bbm}
\usepackage{xcolor}
\usepackage{soul} 
\usepackage{comment}
\usepackage{cleveref}
\usepackage{enumitem}

\crefformat{equation}{(#2#1#3)}
\crefrangeformat{equation}{(#3#1#4)--(#5#2#6)}
\crefmultiformat{equation}{(#2#1#3)}%
{ and~(#2#1#3)}{, (#2#1#3)}{ and~(#2#1#3)}

\usepackage{tikz}
\usetikzlibrary{angles, quotes, decorations.pathreplacing, arrows.meta, calc}

\numberwithin{equation}{section}

\title[Long-range scattering]{Long-range scattering}
\dedicatory{Dedicated to I.M. Sigal on the event of his 80th birthday.}

 \author[Avy Soffer]{Avy Soffer}
 \address[Avy Soffer]{\newline
        Department of Mathematics, \newline
         Rutgers University, New Brunswick, NJ 08903 USA.}
  \email[]{soffer@math.rutgers.edu}
\author[Gavin Stewart]{Gavin Stewart}
\address[Gavin Stewart]{\newline
        Department of Mathematics, \newline
         Arizona State University, Tempe, AZ 85281 USA.}
  \email[]{gavin.s.stewart@asu.edu}
  
\author[Xiaoxu Wu]{Xiaoxu Wu}
\address[Xiaoxu Wu]{\newline
        Mathematical Sciences Institute, \newline
        Australia National University, Acton, ACT 2601, Australia}
 \email[]{Xiaoxu.Wu@anu.edu.au}

\newtheorem{theorem}{Theorem}[section]
\newtheorem{lemma}[theorem]{Lemma}

\newtheorem{corollary}[theorem]{Corollary}
\newtheorem{proposition}[theorem]{Proposition}
\newtheorem{remark}{Remark}[section]

\newcommand{\R}{\mathbb{R}}

\newcommand{\ext}{\textup{ext}}

\newcommand{\bbR}{\mathbb{R}}

\newcommand{\cS}{\mathcal{S}}

\newcommand{\cF}{\mathcal{F}}

\newcommand{\rmI}{\mathrm{I}}
\newcommand{\rmII}{\mathrm{II}}

\newcommand{\trmIst}{\tilde{\mathrm{I}}_\text{stat}}

\newcommand{\rmInst}{\mathrm{I}_\text{nonstat}}
\newcommand{\trmInst}{\tilde{\mathrm{I}}_\text{nonstat}}
\newcommand{\rmIInst}{\mathrm{II}_\text{nonstat}}

\newcommand{\urem}{u_\textup{rem}}
\newcommand{\uscat}{u_\textup{scat}}
\newcommand{\uwl}{u_\textup{wl}}
\newcommand{\Vint}{V_\textup{int}}
\newcommand{\Vext}{V_\textup{ext}}
\newcommand{\CVext}{\mathcal{V}_\textup{ext}}
\usepackage{xcolor}

\newcommand{\eq}{\begin{equation}}
\newcommand{\eeq}{\end{equation}}

\DeclareMathOperator*{\slim}{s-lim}
\DeclareMathOperator*{\wlim}{w-lim}
\DeclareMathOperator{\supp}{supp}

\DeclareMathOperator{\ad}{ad}

\newcommand{\jBra}[1]{\left\langle #1 \right\rangle}

\begin{document}
%%%%%%%%%%%%%%%%

\date{\today}

\subjclass{}
\keywords{}

%\thanks{\textbf{Acknowledgment.} }

\begin{abstract}
We study the scattering problem for a long range potential, which is time dependent. We prove the existence and completeness of the scattering wave operators, and find some properties of the weakly localized, non-scattering part of the solution.
The method we use follows recent methods introduced and applied to short range systems.
%The present work can be seen as an extension of the scattering results of~\cite{SW20221} for the Schr\"odinger equation. 
\end{abstract}
\maketitle
%\centerline{\date}
%{\bf Keywords}:The Boltzmann equation, nonlinear functional, $L^1$ stability.

%\tableofcontents
\section{Introduction}

In this paper, we will study the long-time behavior of solutions to the Schr\"odinger equation
\begin{equation}\label{eqn:main}
    i\partial_t u = (-\Delta + V(x,t)) u
\end{equation}
in dimension $3$.  Here, $V(x,t)$ is a (possibly time-dependent) potential satisfying the finite symbol condition
\begin{equation}\label{eqn:V-cond}
    \sup_{x,t} \jBra{x}^{a+\mu} |D_x^a V(x,t)| \lesssim 1
\end{equation}
for some $\mu \in (1/2, 1]$ and for all $a \in [0, 8]$.  Our goal is to understand all possible asymptotic behaviors for solutions to~\eqref{eqn:main}.  

In the time-independent case $V = V(x)$, the result is completely understood: The spectral theorem for $-\Delta + V$ induces a decomposition $L^2(\bbR^3) = \mathcal{H}_d + \mathcal{H}_c$, together with a corresponding decomposition $u(t) = P_d u(t) + P_c u(t)$.  Furthermore, the spectral decomposition is reflected in the physical-space behavior of the solution: The discrete part $P_d u(t)$ remains localized near the origin for all time, while the continuous component $P_c u(t)$ exhibits modified scattering asymptotics and exits any compact set $K$ as $t \to \infty$.

In the general time-dependent case, we no longer have access to the spectral theorem, and it does not appear possible to obtain such a precise decomposition (particularly the localization and precise behavior of $P_d u(t)$) using the tools available now.  However, we will show that even in the time dependent case it is possible to obtain a decomposition $u(t) =\uscat(t) + \uwl(t)$, where $\uscat$ exhibits modified scattering asymptotics and $\uwl(t)$ is a remainder term which spreads sub-ballistically (see~\Cref{thm:main-thm,thm:better-rate} below for a precise statement).  In particular, $\uscat$ and $\uwl$ concentrate on sets whose separation grows like $t$ as $t \to \infty$ (see~\Cref{rmk:phys-space-separation}).

\subsection{Background and prior work}

The condition $\mu \leq 1$ in~\eqref{eqn:V-cond} means that the potential $V$ is long-range and induces additional non-perturbative corrections to the scattering dynamics.  In particular, the (free) wave-operator
\begin{equation*}
    \Omega_\text{free} = \slim_{t \to \infty} U(0,t) e^{it\Delta}
\end{equation*}
fails to exist in $L^2$. Here, $U(t,0)$ is the propagator for the full dynamics:
\begin{equation}\label{eqn:full-dyn-def}\left\{\begin{array}{c}
    i \partial_t U(t,s) = (-\Delta + V(x,t)) U(t,s)\\
    i \partial_s U(t,s) = -U(t,s)(-\Delta + V(x,s))\\
    U(s,s) = I
\end{array}\right.\end{equation}
To understand the asymptotic behavior of solutions, we must choose an appropriate modified scattering dynamics to capture the additional long-range corrections introduced by the slowly decaying potential.  In this paper, we will work with the Dollard dynamics~\cite{Dollard}, whose evolution is given by
\begin{equation}\label{eqn:Dollard-dyn-def}
    \left\{\begin{array}{c}
        i \partial_t U_D(t,s) = (-\Delta + V(2pt, t)) U_D(t,s)\\
        i \partial_s U_D(t,s) = -U_D(t,s) (-\Delta + V(2ps, s))\\
        U_D(s,s) = I
        \end{array}\right.
\end{equation}
where $p = -i\nabla_x$ is the momentum operator.  For potentials $V$ with $\mu \in (1/2, 1]$, the Dollard dynamics captures the additional corrections to the scattering dynamics, allowing us to construct the modified wave operator
\begin{equation}
    \Omega_D = \slim_{t \to \infty} U(0,t) U_D(t,0)
\end{equation}
as a strong $L^2$ limit (see~\Cref{cor:full-WO}).  

We remark that there are a number of other methods to construct modified wave operators. The Buslaev-Matveev dynamics~\cite{buslaev1970wave} incorporates higher-order corrections to the Dollard dynamics, which allows the construction of modified wave operators for potentials $V$ satisfying versions of~\eqref{eqn:V-cond} with $\mu \in (0, 1/2]$, although the corrections become increasingly difficult to compute as $\mu \to 0$.  Another approach, which was first introduced by~\cite{Hormander}, is to define dynamics using exact trajectories for the corresponding classical Hamiltonian~\cite{sigal1990long}.  This approach has the advantage of applying to potentials with very slow decay rates, although the resulting dynamics are not as explicit as~\eqref{eqn:Dollard-dyn-def}.  At least in the time-independent case, it is also possible to construct the modified wave operator directly, either by using the resolvent formalism~\cite{ikebe1982stationary,saito2006spectral}, by using solutions to the classical eikonal equation to define the wave operator as a Fourier integral operator~\cite{isozaki1985modified}.  There are also methods to obtain the scattering matrix algebraically using asymptotic observables~\cite{Thirring}. Thirring's argument works for Coulomb Scattering, due to the extra symmetries of this potential. We also remark that these techniques can be generalized to treat $N$-body quantum systems: See~\cite{Merkuriev,skibsted2023stationary}.  For a more detailed treatment of these alternative modified dynamics, we refer the reader to~\cite{derezinskiScatteringTheoryClassical1997}.  Among all of these dynamics, a key advantage of the Dollard dynamics is their simplicity:
$U_D(t,s)$ is an explicit Fourier integral operator:
\begin{equation}\label{eqn:Dollard-Fourier-rep}
    \cF U_D(t,s) g = \exp\left(-i(t-s) |\xi|^2 -i \int_s^t V(2\xi \tau, \tau)\;d\tau\right) \hat{g}(\xi).
\end{equation}
This allows us to derive dispersive estimates for $U_D$ from stationary phase methods.

The existence of $\Omega_D$ shows that there are solutions to~\eqref{eqn:main} which exhibit Dollard dynamics; that is, given a particular modified scattering trajectory $U_D(t,0) u_+$, the solution $u(t)$ to~\eqref{eqn:main} with initial data $u(0) = \Omega_D \uscat(0)$ is well approximated by $\uscat(t)$ in $L^2$ as $t \to \infty$:
\begin{equation*}
    \lim_{t \to \infty} \lVert u(t) - U_D(t,0) u_+ \rVert_{L^2} = 0
\end{equation*}
Based on physical considerations, we would prefer to have an \textit{asymptotic completeness} result enumerating all the possible asymptotic behaviors of solutions to~\eqref{eqn:main}.  In the case where $V$ is time-independent, we know that any solution $u(t)$ to~\eqref{eqn:main} decomposes as $t \to \infty$ as
\begin{equation*}
    u(t) = U_D(t,0) u_+ + u_b(t) + o_{t \to \infty}(1)
\end{equation*}
where $u_b(t)$ is a time-quasiperiodic sum of eigenfunctions of $-\Delta + V(x)$, and $o_{t \to \infty}(1)$ represents a term with vanishing $L^2$ norm as $t \to \infty$.  In the case where there are no bound states, asymptotic completeness is equivalent to showing that $\Omega_D$ is surjective onto $L^2$, or equivalently that
\begin{equation}\label{eqn:Omega-D-adj-slim}
    \Omega_D^* = \slim_{t \to \infty} U_D(0,t) U(t,0)
\end{equation}
exists as an $L^2$-strong limit.\footnote{The existence of $\Omega_D$ guarantees that $\Omega_D^*$ can be written as an $L^2$-\textit{weak} limit: $\Omega_D^* = \wlim_{t \to \infty} U_D(0,t) U(t,0)$.}  If there are bound states, the spectral theorem allows us to obtain asymptotic completeness provided that $\Omega_D$ is surjective onto the continuous spectral subspace, which again follows from~\eqref{eqn:Omega-D-adj-slim}.

In the case where $V$ is time-dependent, we can no longer appeal to spectral theory (except in the special case where $V(x,t)$ is periodic or quasiperiodic in time, where we can appeal to Floquet theory~\cite{yajimaScatteringTheorySchrodinger1977,howlandScatteringTheoryHamiltonians1979}; see also~\cite{howlandStationaryScatteringTheory1974,JL1991,soffer2025local}), so showing~\eqref{eqn:Omega-D-adj-slim} is no longer sufficient to prove asymptotic completeness.  Indeed, this problem is already present in the $N$-body scattering problem with time-independent potentials, and attempts to address it led to the development of propagation estimates and Mourre theory by Mourre \cite{Mourre}, Enss \cite{Enss}, and Sigal-Soffer \cite{SigalSoffer}, and ultimately to the resolution of the problem by using microlocalization with asymptotic observables in~\cite{SigalSoffer,derezinskiScatteringTheoryClassical1997,hunziker2000time,skibsted2023stationary}.

In the general time-dependent case, asymptotic completeness appears to be beyond the reach of modern tools.  To prove asymptotic completeness for the time-independent $2$-body problem, one first projects onto to the continuous spectrum, and then argues using local decay and dispersive estimates that the effect of the potential is perturbative on the continuous spectrum.  At a very high level, the argument for the time-independent $N$-body problem is similar: One begins by considering how particles arrange themselves into clusters, and then argues that intracluster dynamics are described by the discrete spectrum of the corresponding intercluster Schr\"odinger operator, while inter-cluster interactions can be treated perturbatively due to the spatial decay of the potential.  In the time-dependent setting modeled by~\eqref{eqn:main}, the potential can still be treated perturbatively far from the origin, but we can no longer appeal to the spectral theorem to get asymptotics in the region where the potential dominates the dynamics.

To get around this problem, we will follow the approach first introduced by Tao in~\cite{taoAsymptoticBehaviorLarge2004b} and refined in later works~\cite{sofferScatteringLocalizedStates2024a,Soffer-Wu,taoConcentrationCompactAttractor2007,taoGlobalCompactAttractor2008b,stewartWeaklyLocalizedStates2025,soffer2025local,liu2025large,soffer2025Lp}. The approach begins by proving the preliminary decomposition
\begin{equation}\label{eqn:prelim-decomp-intro}
    u(t) = U_D(t,0) u_+ + \urem(t)
\end{equation}
where the remainder term $\urem(t)$ is \textit{weakly bound} in the sense that
\begin{equation}\label{eqn:weak-loc-intro}
    \wlim_{t \to \infty} U_D(0,t) \urem(t) = 0
\end{equation}
in $L^2$.  In particular, this is equivalent to showing the existence of the $L^2$ weak limit
\begin{equation*}
    \wlim_{t \to \infty} U_D(0,t) u(t) = u_+
\end{equation*}
Although precise asymptotics for $\urem(t)$ appear to be out of reach, one can still prove that $\urem(t)$ has localization properties.  In high dimensions, it is possible to prove that $\urem(t)$ is localized uniformly in time~\cite{taoGlobalCompactAttractor2008b,taoConcentrationCompactAttractor2007,soffer2025local}.  Due to the weaker dispersion in dimension $3$, it does not appear possible to prove precise localization; however, by adapting the arguments introduced in~\cite{liu2025large,sofferScatteringLocalizedStates2024a}, we are able to prove that $\urem(t)$ spreads sublinearly in time:
\begin{equation*}
    \lim_{\epsilon \to 0} \lim_{t \to \infty} \lVert \urem(t) \rVert_{L^2(|x| > \epsilon t)} = 0
\end{equation*}
Since maximal velocity estimates imply that the radiation component $U_D(t,0) u_+$ spreads linearly in time, this implies that the two components $U_D(t,0) u_+$ and $\urem(t)$ in the decomposition~\eqref{eqn:prelim-decomp-intro} become separated in physical space.

\subsection{Main Result}

We will prove the following result:
\begin{theorem}\label{thm:main-thm}
    If $\mu \in (1/2,1]$, then for any $u_0 \in L^2(\bbR^3)$, the solution $u(t)$ to~\eqref{eqn:main} decomposes as
    \begin{equation}\label{eqn:main-decomp}
        u(t) = U_D(t,0) u_+ + \urem(t)
    \end{equation}
    for some $u_+ \in L^2$ and $\urem \in L^\infty_t L^2_x$ satisfying the condition
    \begin{equation}\label{eqn:remainder-weak-lim}
        \wlim_{t \to \infty} U_D(0,t) \urem(t) = 0
    \end{equation}
    in $L^2$.  Moreover, if the solution $u(t)$ has bounded energy in the sense that
    \begin{equation}\label{eqn:energy-bound}
        \lVert u \rVert_{L^\infty_t H^1_x} < \infty
    \end{equation}
    then we further have that $u_+ \in H^1$, $\urem \in L^\infty_t H^1_x$, and for any $\epsilon > 0$,
    \begin{equation}\label{eqn:urem-slow-spreading}
        \limsup_{t \to \infty} \left\lVert F\left(\frac{|x|}{\epsilon t} \geq 1 \right) \urem(t) \right\rVert_{L^2_x}  = 0
    \end{equation}
\end{theorem}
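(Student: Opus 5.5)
The plan has two parts: first the weak limit $u_+$ and the decomposition \eqref{eqn:main-decomp}, then the sub-ballistic spreading estimate \eqref{eqn:urem-slow-spreading}.

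\textbf{Step 1: the weak limit.} We define $u_+ := \wlim_{t\to\infty} U_D(0,t)u(t)$ and set $\urem(t) := u(t) - U_D(t,0)u_+$. Then \eqref{eqn:remainder-weak-lim} holds by construction, and $\urem \in L^\infty_t L^2_x$ by unitarity. To show the weak limit exists, I would test against a dense class, say $g \in \cS$ with $\hat g$ supported away from $\xi=0$, and use Duhamel:
\[
\langle U_D(0,t)u(t), g\rangle = \langle u_0, g\rangle + i\int_0^t \bigl\langle u(s), \bigl(V(x,s)-V(2ps,s)\bigr)U_D(s,0)g\bigr\rangle\,ds .
\]
It then suffices to prove $\lVert (V(x,s)-V(2ps,s))U_D(s,0)g\rVert_{L^2} \in L^1_s$. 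I would get this from stationary phase applied to \eqref{eqn:Dollard-Fourier-rep}. The solution $U_D(s,0)g$ concentrates where $x \approx 2\xi s + O(s^{1-\mu})$, the correction coming from $\nabla_\xi\int_0^s V(2\xi\tau,\tau)\,d\tau$. Writing $V(x,s)-V(2ps,s)$ via a commutator/Taylor expansion, the error is of order $|\nabla V|\cdot|x-2ps| \lesssim s^{-1-\mu}\cdot s^{1-\mu}$ together with a term $s^{-2}\cdot O(1)$ from the free part. This gives $s^{-2\mu}$, which is integrable precisely when $\mu>1/2$.

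Density then extends the weak limit to all of $L^2$, since $\lVert U_D(0,t)u(t)\rVert = \lVert u_0\rVert$ is bounded. The same computation in $H^1$ gives $u_+\in H^1$ under \eqref{eqn:energy-bound}; here I use that $U_D$ commutes with $p$, so $\lVert u_+\rVert_{H^1} \le \liminf\lVert u(t)\rVert_{H^1}$ by weak lower semicontinuity. It follows that $\urem \in L^\infty_t H^1_x$.

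\textbf{Step 2: sub-ballistic spreading.} Fix $\epsilon$ and a smooth cutoff $F_\epsilon(t) = F(|x|/(\epsilon t)\ge1)$. Following \cite{liu2025large,sofferScatteringLocalizedStates2024a}, I split $F_\epsilon\urem(t)$ further in phase space. I would write $\urem(t) = u(t) - U_D(t,0)u_+$ and represent $u(t)$ via the incoming/outgoing decomposition. Using Duhamel backward from time $t$, $\urem(t) = \lim_{T\to\infty}$ of the integral $-i\int_t^T U(t,s)\bigl(V(x,s)-\dots\bigr)\dots$, where the weak-limit property \eqref{eqn:remainder-weak-lim} kills the boundary term at $T=\infty$ in the weak sense. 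More precisely, I would write
\[
F_\epsilon\urem(t)=F_\epsilon P^{+}\urem(t)+F_\epsilon P^{-}\urem(t)+\text{(low frequency)},
\]
with $P^\pm$ microlocal projections onto outgoing and incoming regions $\pm x\cdot p>0$, together with a frequency cutoff $|p|\ge\delta$. The $H^1$ bound controls the high-frequency piece.

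For the outgoing piece, I represent it by the forward Duhamel formula for the Dollard flow, $\int_t^\infty U_D(t,s)(\dots)u(s)\,ds$. There the potential at $|x|\gtrsim\epsilon s$ is $O((\epsilon s)^{-\mu})$, and combining this with the $s^{-2\mu}$ comparison from Step 1 and outgoing propagation estimates gives $o(1)$. The incoming piece is treated by the backward Duhamel formula from time $0$ together with incoming propagation estimates: incoming waves at distance $\epsilon t$ must have come from infinity, and their contribution is small by the minimal velocity $\delta$. The low-frequency region $|p|<\delta$ at $|x|>\epsilon t$ would be handled by a minimal-velocity/Heisenberg argument showing that its mass tends to zero as $\delta\to0$, uniformly in $t$.

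\textbf{Main obstacle.} I expect Step 2 to be the hardest. The decay $\jBra{x}^{-\mu}$ with $\mu\le1$ is not integrable along rays, so the outgoing and incoming Duhamel integrals do not converge absolutely. One must instead use the Dollard phase and subtract $V(2ps,s)$, leaving the $s^{-2\mu}$ error, and make this compatible with the $F_\epsilon$ cutoff. The commutators $[F_\epsilon, V(2ps,s)]$ and the propagation estimates for $U_D$ at the sublinear scale $s^{1-\mu}$ are the technical core.
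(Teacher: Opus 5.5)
Your Step 1 is sound and follows the paper. Integrability of $\lVert (V(x,s)-V(2ps,s))U_D(s,0)g\rVert_{L^2}$ is exactly the Cook's-method estimate behind $\Omega_D$. Your Taylor heuristic does need the paper's $\Vint/\Vext$ splitting and non-stationary phase analysis, because the segment from $x$ to $2\xi s$ may pass near the origin. Your weak lower semicontinuity argument for $u_+\in H^1$ is correct and shorter than the paper's.

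The gap is in Step 2. For the outgoing piece you need $\int_t^\infty \lVert (V(x,s)-V(2ps,s))\,U_D(s,t)(F_\epsilon P^+)^*\rVert\,ds\to0$, which is a bound uniform over all outgoing states in $\{|x|\geq\epsilon t\}$ at time $t$. The $s^{-2\mu}$ gain of Step 1 does not transfer. It comes from $U_D(s,0)g$ living on $|x-2ps|\lesssim s^{1-\mu}$, which is a property of states leaving the origin at time $0$. Take a packet at $(x_0,p_0)$ at time $t$ with $x_0=p_0t$: it is outgoing and in the exterior region, but was emitted from the origin at time $t/2$. Under $U_D(s,t)$ it has $x-2ps\approx -p_0t$, so $|V(x,s)-V(2ps,s)|\sim t\,s^{-1-\mu}$ and the integral is of order $t^{1-\mu}$. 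This is not an artifact of the estimate. For $V=\jBra{x}^{-\mu}$ the true and Dollard phases differ on such packets by an amount of order $t^{1-\mu}$, nonzero even when $\mu=1$. So no uniform-in-$t$ comparison of $U$ with $U_D$ on outgoing exterior states can hold.

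The incoming piece is worse. Backward in time $x\cdot p<0<2ps\cdot p$, so nothing cancels, and neither $\int_0^t\lVert V(2ps,s)F(|p|\geq\delta)\rVert\,ds$ nor $\int_0^t(\epsilon t+\delta(t-s))^{-\mu}\,ds$ tends to zero for $\mu\leq1$. You correctly note that Duhamel formulas see $V$ itself, which is not integrable along trajectories. But subtracting $V(2ps,s)$ only helps on the Dollard propagation set, so Step 2 as written does not yield \eqref{eqn:urem-slow-spreading}.

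The paper's route avoids both uniform operator bounds and exterior Duhamel formulas. It replaces $\urem$ by the weakly bound \emph{solution} $\uwl=u-U(t,0)\Omega_D\Omega_D^*u_0$, which is asymptotically equal to $\urem$. This lets Heisenberg-derivative estimates apply, and these involve only commutators such as $[V,\gamma]=O(\jBra{x}^{-1-\mu})$, which are integrable in time on the region $|x|\gtrsim t^\alpha$.

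The weak-bound hypothesis enters only through $\langle F_\alpha\gamma F_\alpha\rangle_t=\langle U_D(0,t)\uwl(t),\,U_D(0,t)F_\alpha\gamma F_\alpha U(t,0)\uwl(0)\rangle\to0$. Here the second factor converges \emph{strongly} for the fixed vector $\uwl(0)$, and strong convergence for one fixed vector needs no uniformity over phase-space packets, which is exactly what fails above. That strong limit is assembled from $\Omega_{\ext}^{F_\alpha\gamma F_\alpha}$ and $\Omega_{D,\ext}$. The existence of $\Omega_{D,\ext}$ rests on the slow-growth bound $\lVert |x-2pt|\psi(t)\rVert\lesssim t^{\rho}$, with $\rho<\tfrac12<\mu$, for the exterior flow on a dense set (\Cref{prop:ESG}). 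This is the ingredient that controls states off the Dollard set.

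The vanishing exterior radial momentum is then converted into the spreading bound by the truncated-weight ($G_R$) propagation estimate and Markov's inequality. Your incoming and low-frequency pieces could plausibly be rescued by commutator estimates for $u$ alone. The outgoing piece, however, is where the weak-bound property must be used, and there you need something like this wave-operator argument.
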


\begin{remark}\label{rmk:phys-space-separation}
    A direct stationary phase argument shows that $U_D(t,0) u_+$ obeys the minimal velocity estimate
    \begin{equation*}
        \lim_{\epsilon \to 0} \limsup_{t \to \infty}\left\lVert F\left(\frac{|x|}{\epsilon t} \leq 1\right) U_D(t,0) u_+\right\rVert_{L^2_x}  = 0.
    \end{equation*}
    (see also~\cite{HSS} for discussions of minimal velocity estimates in more general settings). Combined with~\eqref{eqn:urem-slow-spreading}, this shows that the components $U_D(t,0) u_+$ and $\urem(t)$ in~\eqref{eqn:main-decomp} are separated in physical space.
\end{remark}

In the decomposition~\eqref{eqn:main-decomp}, neither $U_D(t,0) u_+$ nor $\urem(t)$ is a solution to~\eqref{eqn:main}.  However, it is possible to find solutions $\uscat(t)$ and $\uwl(t)$ such that
\begin{equation}\label{eqn:U_Du_+-uscat-equiv-intro}
    \lim_{t \to \infty} \lVert U_D(t,0) u_+ - \uscat(t) \rVert_{L^2_x} = 0
\end{equation}
and
\begin{equation}\label{eqn:urem-uwl-equiv-intro}
    \lim_{t \to \infty} \lVert \urem(t) - \uwl(t) \rVert_{L^2_x} = 0
\end{equation}
see~\Cref{cor:equiv-decomps}.  In particular, when proving~\eqref{eqn:urem-slow-spreading} we will in fact work with $\uwl$, since the fact that $\uwl$ satisfies~\eqref{eqn:main} allows us to use propagation estimates.  

More generally, we follow the definition introduced by Tao in~\cite{taoAsymptoticBehaviorLarge2004b} and say that a solution $\uwl(t)$ to~\eqref{eqn:main} is \textit{weakly bound} if
\begin{equation}\label{eqn:weakly-bound}
    \wlim_{t \to \infty} U_D(0,t) \uwl(t) = 0
\end{equation}
in the $L^2$ sense.\footnote{To understand the terminology, observe that for the $2$-body problem with a time-independent potential, the bound state solutions are precisely those belonging to the discrete spectral subspace of $-\Delta + V$, and the RAGE theorem~\cite{teschlMathematicalMethodsQuantum2009} guarantees that these are exactly the states satisfying~\eqref{eqn:weakly-bound}.}  In particular, the result~\eqref{eqn:urem-slow-spreading} follows immediately from the corresponding result
\begin{equation}\label{eqn:uwl-slow-spreading-intro}
        \limsup_{t \to \infty} \left\lVert F\left(\frac{|x|}{\epsilon t} \geq 1 \right) \uwl(t) \right\rVert_{L^2_x}  = 0
    \end{equation}
for weakly bound states.

The sublinear spreading bound~\eqref{eqn:uwl-slow-spreading-intro} only uses the fact that $\uwl(0)$ decays as $|x| \to \infty$ in an $L^2$ sense.  If we assume that a weakly bound state has stronger localization properties at $t = 0$, then it is possible to improve the sublinear spreading estimate~\eqref{eqn:urem-slow-spreading} to a polynomial rate:

\begin{theorem}\label{thm:better-rate}
    If $\uwl(x,t)$ is a solution to~\eqref{eqn:main} such that
    \begin{equation*}
        \wlim_{t \to \infty} U_D(0,t) \uwl(t) = 0
    \end{equation*}
    and $|x|^{1/2} \uwl(x,0) \in L^2$, then
    \begin{equation}\label{eqn:uwl-better-rate}
        \lVert |x|^{1/2} \uwl(x,t) \rVert_{L^2_x}^2 \leq \lVert |x|^{1/2} \uwl(x,0) \rVert_{L^2_x}^2 + O_\alpha\left(t^\alpha\right)
    \end{equation}
    for any $\alpha > \frac{3}{2 + 2\mu}$.
\end{theorem}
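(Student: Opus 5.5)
Since $\uwl$ solves~\eqref{eqn:main}, I will run a virial-type propagation argument with the observable $\jBra{x}$ (equivalently $|x|^{1/2}$ squared; I use $\jBra{x}$ and absorb the difference into the $O(1)$ error). The Heisenberg derivative is
\begin{equation*}
    \frac{d}{dt}\langle \uwl(t), \jBra{x} \uwl(t)\rangle = \langle \uwl(t), i[-\Delta + V, \jBra{x}] \uwl(t)\rangle = \langle \uwl(t), A\, \uwl(t)\rangle,
\end{equation*}
where $A = \frac{x}{\jBra{x}}\cdot p + p\cdot \frac{x}{\jBra{x}}$ is a bounded-symbol first-order operator. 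The potential commutes with $\jBra{x}$, so it contributes nothing directly. The task is therefore to show
\begin{equation*}
    \int_0^T \langle \uwl, A \uwl\rangle\,dt = O(T^\alpha).
\end{equation*}
A naive bound gives only $O(T)$, which merely reflects ballistic spreading. The gain must come from the weak-binding hypothesis.

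\textbf{Step 1: exterior region.} I split $\uwl = F(|x| \le R(t))\uwl + F(|x| > R(t))\uwl$ with a growing radius $R(t) = t^{\beta}$, using smooth cutoffs. On the interior piece, $A$ is bounded by $\|p\,\uwl\|$ localized to $|x|\lesssim R$. I would rather handle it through a localized commutator estimate, giving a contribution of order $R(t)\cdot$(small) or absorbable. On the exterior piece the potential is small, $|V| \lesssim R^{-\mu}$, and the solution should behave like the Dollard flow.

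\textbf{Step 2: use weak binding on the exterior.} I would write $\uwl(t) = U(t,0)\uwl(0)$ and, on $|x|>R$, compare $U(t,s)$ with the free or Dollard evolution via Duhamel. The weak-limit condition says that the component of $\uwl$ following outgoing Dollard trajectories vanishes asymptotically. Concretely, I would write the exterior part as an incoming-wave piece plus a Duhamel integral
\begin{equation*}
    \int_t^\infty U_D(t,s)\, \big(V(x,s)-V(2ps,s)\big) F(\cdot)\uwl(s)\,ds,
\end{equation*}
obtained by letting the "free" part at $s=\infty$ vanish by~\eqref{eqn:weakly-bound}. The difference $V(x,s) - V(2ps,s)$ is small, of size $\jBra{s}^{-\mu}$ times a gradient factor $(x-2ps)/s$. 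The dispersive decay of $U_D$ from~\eqref{eqn:Dollard-Fourier-rep} (stationary phase, $t^{-3/2}$ in 3D) then gives a bound on $\langle \uwl, A\uwl\rangle$ restricted to the exterior of order $t^{-\gamma}$ plus localized terms. Meanwhile the incoming part has $A$ essentially nonpositive (outgoing/incoming microlocalization with $F(x\cdot p<0)$), so it only decreases the moment.

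\textbf{Step 3: optimize.} Balancing the interior contribution (growth in $R=t^\beta$) against the exterior Duhamel error (decay through $R^{-\mu}$, the $3/2$ dispersion, and the time integration) yields a total of $O(T^\alpha)$ for any $\alpha > 3/(2+2\mu)$. The endpoint arises as the crossover of $t^{\beta}$ with a term like $t^{1-\beta(1+\mu)+\cdots}$ paired with the $3/2$ dispersion exponent. Taking $\alpha$ strictly larger absorbs $\epsilon$-losses from the symbol estimates~\eqref{eqn:V-cond}.

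\textbf{Main obstacle.} I expect the main obstacle to be Step 2, namely making the Duhamel representation and outgoing/incoming split rigorous with only a weak limit. First I would prove convergence of the Duhamel integral in a weighted $L^2$ by using the $\jBra{x}^{1/2}$ weight together with the $t^{-3/2}$ Dollard dispersive estimate. I would then check that the exponent bookkeeping actually produces $3/(2+2\mu)$ rather than a worse number.
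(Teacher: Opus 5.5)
There is a genuine gap, and it is in Step~2, where the weak-binding hypothesis has to do the work. The problem is not only one of rigor: the mechanism you propose cannot produce a rate.

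\begin{itemize}
\item The condition $U_D(0,t)\uwl(t)\rightharpoonup 0$ carries no rate, and it gives no norm-convergent Duhamel formula from $s=\infty$. The integrand $\frac{d}{ds}U_D(0,s)\uwl(s)=iU_D(0,s)\bigl(V(2ps,s)-V(x,s)\bigr)\uwl(s)$ need not decay in norm at all. For a static eigenfunction $\psi$, which is weakly bound, it tends to $\lVert V\psi\rVert_{L^2}$. So $\int_t^\infty$ exists only as a weak improper limit and yields no estimate.
\item The expansion $V(x,s)-V(2ps,s)\approx\nabla V\cdot(x-2ps)$ is small only on states for which $|x-2ps|$ is controlled, i.e.\ near the outgoing set $x\approx 2ps$. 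A weakly bound state is precisely one with no such component.
\item Inserting the cut-off $F$ does not help. $F\uwl$ does not solve the equation, and its Duhamel formula carries the boundary source $[\Delta,F]\uwl$ (the flux of the interior part through $|x|\sim R(t)$), which you have no way to make small.
\item An $L^1\to L^\infty$ dispersive bound for $U_D$ cannot control $\langle\uwl,A\uwl\rangle$ without decay information on $\uwl(s)$ that you do not have.
\item The incoming/outgoing split is not preserved by the flow, so ``the incoming part only decreases the moment'' is unjustified.
\end{itemize}

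Consequently Step~3 is an assertion, not a computation. In the paper the threshold $\frac{3}{2+2\mu}$ does not come from balancing interior growth against a Duhamel error. Step~1 has a related flaw. Bounding the interior part of $\langle A\rangle$ by $\lVert p\uwl\rVert$ uses $H^1$ control that is not assumed, and an $O(1)$ integrand still gives $O(T)$ after integration.

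The missing ideas are to use weak binding only qualitatively, and to get the rate from a monotonicity argument.

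\emph{Qualitative vanishing.} For $\alpha>\beta$ the paper shows that $\slim_{t\to\infty}U_D(0,t)F_\alpha\gamma F_\alpha U(t,0)$ exists. It does this by factoring through the exterior flow $U_{\ext}$, which is legitimate since $F_\alpha V=F_\alpha\Vext$. The exterior-to-Dollard wave operator is built by Cook's method from the slow growth $\lVert\, |x-2pt|\psi\rVert\lesssim t^{\rho}$ with $2-(1+\mu)\beta<\rho<\frac12$. This is what forces $\beta>\frac{3}{2+2\mu}$. Then $\langle F_\alpha\gamma F_\alpha\rangle_t$ pairs the weakly null sequence $U_D(0,t)\uwl(t)$ with a strongly convergent one, so it tends to $0$.

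\emph{Rate from monotonicity.} Two positivity facts are used: $F_\alpha[-i\Delta,\gamma]F_\alpha\ge 0$, and the boundary term $4t^{-\alpha}\langle\sqrt{F_\alpha F_\alpha'}\gamma^2\sqrt{F_\alpha F_\alpha'}\rangle_t$ is positive and absorbs the cut-off's time derivative. Together they give $\frac{d}{dt}\langle F_\alpha\gamma F_\alpha\rangle_t\ge -Ct^{\alpha-2}$ when $\alpha\ge\frac{2}{2+\mu}$. Integrating from $t=\infty$ gives $\langle F_\alpha\gamma F_\alpha\rangle_t\le Ct^{\alpha-1}$. Then $\frac{d}{dt}\langle F_\alpha|x|F_\alpha\rangle_t\le 2\langle F_\alpha\gamma F_\alpha\rangle_t$ integrates to $O(t^\alpha)$.

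The interior is never differentiated: $\langle(1-F_\alpha^2)|x|\rangle_t\lesssim t^\alpha$ holds trivially.
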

\begin{remark}
    In the case $\mu = 2$, the results of~\cite{Soffer-Wu} imply that it is possible to construct slowly spreading states (at least in high dimension).
\end{remark}

\begin{remark}\label{rmk:rate-improvements}
    We expect that localized weakly bound states in fact spread at a maximum rate of $t^{\alpha_0}$ where $\alpha_0 = \frac{2}{2+\mu}$ is the rate of spreading for an unbounded solution of the Hamiltonian ODE
    \begin{equation*}
        \ddot{x} = -\nabla_x |x|^{-\mu} 
    \end{equation*}
    Indeed,~\Cref{thm:optimal-spread-theorem} would imply~\eqref{eqn:uwl-better-rate} with $\alpha = \alpha_0$, provided that we could prove that
    \begin{equation*}
        \lim_{t \to \infty} \langle F_{\alpha_0} \gamma F_{\alpha_0} \rangle_t = 0.
    \end{equation*}
    However, actually proving such a bound appears quite delicate, and would likely require us to choose a more accurate scattering trajectory than the Dollard one.  Indeed, the requirement $\alpha > \frac{3}{2+2\mu}$ in~\Cref{thm:better-rate} is forced on us by the requirement in~\Cref{prop:ESG} that the trajectories under the exterior potential $\Vext(x,t)$ behave like those of the free problem and the control of the leading order error term~\eqref{eqn:HO-Dollard-lin-term} from the Dollard dynamics.  

    Replacing the Dollard dynamics with the $n$th order Buslaev-Matveev dynamics should replace the error term $t^{\rho - (1+\mu)\beta}$ from~\eqref{eqn:HO-Dollard-lin-term} with the error $t^{n\rho - (n+\mu)\beta}$.  Since~\Cref{prop:ESG} only involves the flow under the exterior potential $\Vext$, the restriction $\rho > 2 - (1+\mu)\beta$ does not improve under the Buslaev-Matveev dynamics, so under the $n$th order Buslaev-Matveev dynamics we expect that~\Cref{thm:better-rate} should hold for all $\alpha > \alpha_n = \frac{2n+1}{2n+(n+1)\mu}$ (in particular, the Buslaev-Matveev dynamics with $n = 2$ exactly corrects the error term~\eqref{eqn:HO-Dollard-lin-term} appearing in the Dollard dynamics, leaving only the next order error~\eqref{eqn:HO-Dollard-quad-term}).  As $n \to \infty$, $\alpha_n \downarrow \alpha_0$, so even Buslaev-Matveev corrections to the asymptotic dynamics of arbitrarily high order may not be sufficient to obtain the optimal rate of spreading.
\end{remark}

\subsection{Sketch of the proof}

Our proof of~\Cref{thm:main-thm,thm:better-rate} consists of three steps: 
\subsubsection{Step 1. Obtain the preliminary decomposition} 

First, in~\Cref{sec:Dollard-WO}, we prove the existence of the decomposition~\eqref{eqn:main-decomp}, and show that it is equivalent to a decomposition
\begin{equation}\label{eqn:sketch-decomp}
    u(t) = \uscat(t) + \uwl(t)
\end{equation}
where $\uscat(t)$ and $\uwl(t)$ are solutions to~\eqref{eqn:main}, where $\uscat(t) - U_D(t,0) u_+ \to 0$ in $L^2$ and $\uwl(t)$ is weakly bound (i.e., it satisfies~\eqref{eqn:U_Du_+-uscat-equiv-intro} and~\eqref{eqn:weak-loc-intro}).  In particular, the decomposition~\eqref{eqn:sketch-decomp} shows that the result~\eqref{eqn:uwl-sublin-spread} about $\urem$ will follow if we can show that
\begin{equation}\label{eqn:uwl-sublin-sketch}
    \lim_{\epsilon \to 0} \limsup_{t \to \infty} \left\lVert F\left(\frac{|x|}{\epsilon t} \geq 1 \right) \uwl(t) \right\rVert_{L^2_x}  = 0
\end{equation}
for any weakly bound state $\uwl(t)$.
\subsubsection{Step 2. Prove exterior radial momentum vanishes for weakly bound states}
Next, we turn our attention to~\Cref{thm:better-rate} in~\Cref{sec:slow-spread}.  In particular, at this stage we will assume that $\uwl(t)$ is initially strongly localized at $t = 1$, in the sense that $\jBra{x}^N \uwl(t) \in L^2_x$ with $N$ large.  To prove~\Cref{thm:better-rate}, we observe that the Heisenberg derivative of $|x|$ under the Schr\"odinger flow~\eqref{eqn:main} is
\begin{equation*}
    D_H |x| = 2\gamma
\end{equation*}
where
\begin{equation*}
    \gamma = \frac{1}{2}\left(p \cdot \frac{x}{|x|} + \frac{x}{|x|} \cdot p \right)
\end{equation*}
is the Morawetz vector field, which measures the momentum of the solution in the radial direction.  If we assume that the radial momentum has a limit as $t \to \infty$, then sublinear spreading of weakly bound states would be equivalent to the statement that
\begin{equation}\label{eqn:gamma-strong-vanishing}
    \langle \uwl(t), \gamma \uwl(t) \rangle =: \langle \gamma \rangle_t \to 0
\end{equation}
Indeed, for Schr\"odinger equations with time-independent potentials, $\langle \gamma \rangle_t$ vanishes for bound states.  However, in the case when $V$ is time-dependent, it is not clear whether $\lim_{t \to \infty} \langle \gamma \rangle_t$ should exist: For example, one might imagine that a carefully chosen time-dependent potential could produce a breather-like solution, with a time-periodic $\langle \gamma \rangle_t$.  However, this problem is localized near the origin: If we insert a cutoff $F_\alpha$ to $|x| \gtrsim t^\alpha$, we can prove that
\begin{equation}\label{eqn:gamma-vanishing-exterior-sketch}
    \lim_{t \to \infty} \langle F_\alpha \gamma F_\alpha \rangle_t = 0
\end{equation}
For radial solutions $\uwl$ and potentials $V$ with sufficiently rapid decay,~\eqref{eqn:gamma-vanishing-exterior-sketch} was proved in~\cite{liu2025large}.  However, the method of proof does not generalize to our setting, so we must give a new proof based on the existence of certain wave operators.  Once we know~\eqref{eqn:gamma-vanishing-exterior-sketch}, it is possible to use propagation estimates to obtain a rate of vanishing for the exterior radial momentum:
\begin{equation*}
    \langle F_\alpha \gamma F_\alpha \rangle_t \lesssim t^{\alpha-1}
\end{equation*}
which then implies that
\begin{equation*}
    \langle F_\alpha |x| F_\alpha \rangle_t \lesssim \jBra{t}^\alpha
\end{equation*}
Since $\lVert (1-F_\alpha) |x| \rVert_{L^2 \to L^2} \lesssim t^\alpha$, this proves~\Cref{thm:better-rate}.

\subsubsection{Step 3. Remove the moment condition}

It only remains to prove that~\eqref{eqn:uwl-sublin-sketch} holds under the weaker condition $\uwl(t) \in L^2$ (with no moments).  Under this condition, a density argument still gives us that
\begin{equation*}
    \langle F_\alpha \gamma F_\alpha \rangle_t \to 0
\end{equation*}
However, $\langle F_\alpha |x| F_\alpha \rangle_t$ may be infinite, so we must proceed more carefully.

To get around the unboundedness of the space weight $|x|$, we introduce the weight $G_R$, which behaves like a smoothed version of $\max(|x|, R)$.  We then prove that
\begin{equation*}
    \langle F_\alpha G_R F_\alpha \rangle_t = o(R + t)
\end{equation*}
in the joint limit $R \to \infty$, $t \to \infty$.  In particular, taking $R = \epsilon t$, we conclude that
\begin{equation*}
    \langle F_\alpha G_{\epsilon t} F_\alpha \rangle_t = o(t)
\end{equation*}
By Markov's inequality, this implies that
\begin{equation*}
    \lim_{t \to \infty} \left\lVert F\left( \frac{|x|}{\epsilon t} \geq 1\right) F_\alpha \uwl \right\rVert_{L^2_x} = 0
\end{equation*}
Since 
$$F\left( \frac{|x|}{\epsilon t} \geq 1\right) F_\alpha = F\left( \frac{|x|}{\epsilon t} \geq 1\right)$$
for $t$ sufficiently large (depending on $\epsilon$), this proves~\eqref{eqn:uwl-sublin-sketch} and completes the proof of~\Cref{thm:main-thm}.

\section{Notation and preliminary results}

\subsection{Notation}

Throughout the paper, we will work in $\bbR^3$ unless stated otherwise.  We write $\jBra{x} := (1 + |x|^2)^{1/2}$ for the Japanese bracket.

We will use tensor notation.  Given two vectors $u$ and $v$, we denote by $u \otimes v$ the matrix
\begin{equation*}
    u \otimes v = u v^T
\end{equation*}
which has the property that
\begin{equation*}
    (u \otimes v)w = (v \cdot w) u
\end{equation*}
We also use $D_x^a f$ to denote the tensor of all partial derivatives of $f$ of order $a$, and use $v^{\otimes a} = \underbrace{v \otimes v \otimes \cdots \otimes v}_{a\textup{ times}}$.  The contraction of two tensors $A$ and $B$ of the same order is denoted by $A \cdot B$; so, for example, we write the Taylor expansion of a smooth function $f$ as
\begin{equation*}
    f(x) = f(a) + D_x f(a) \cdot (x-a) + \frac{1}{2} D_x^2 f(a) \cdot (x-a)^{\otimes 2} + \cdots
\end{equation*}

\subsubsection{Inequalities and (implicit) constants}
The constants in inequalities of the form $A \lesssim B$ may change from line to line and depend only on the value of $\mu$, etc. specified by the hypothesis~\eqref{eqn:V-cond} and on parameters the~$\beta$,  $\rho$, and $\alpha$ we will introduce later (c.f.~\Cref{sec:const-and-params} and~\Cref{thm:optimal-spread-theorem}).  To avoid cumbersome notation, when bounding expressions of the form $D_x^a f$ we will also not track how the implicit constants depend on $a$.  (In practice, we only need a finite number of derivatives in all of our arguments, so we can always choose an implicit constant independent of $a$.)   When the implicit constant depends on additional quantities, we record them as a subscript: e.g.~$A \lesssim_\phi B$ indicates that the constant may depend on $\phi$.

\subsubsection{Function spaces}
For $1 \leq p \leq \infty$ and $s \in \bbR$, we let $L^p = L^p(\bbR^3)$ and $H^s = H^s(\bbR^3)$ denote the usual Lebesgue and (inhomogeneous) Sobolev spaces, with norms
\begin{equation*}
    \lVert f \rVert_{L^p}^p = \int_{\bbR^3} |f(x)|^p\;dx, \qquad \lVert f \rVert_{H^s}^2 = \int_{\bbR^3} \jBra{\xi}^{2s} |\hat{f}(\xi)|^2\;d\xi.
\end{equation*}
For mixed-norm spaces we write $L^q_t L^p_x$, $L^\infty_t H^s_x$, etc., with the convention that the time variable is taken first.  The Schwartz space is denoted $\cS = \cS(\bbR^3)$.

\subsubsection{Fourier transform}  We use the convention
\begin{equation*}
    \cF f(\xi) = \hat{f}(\xi) = \frac{1}{(2\pi)^{3/2}} \int_{\bbR^3} e^{-i x \cdot \xi} f(x)\;dx, \qquad \cF^{-1} g(x) = \frac{1}{(2\pi)^{3/2}} \int_{\bbR^3} e^{i x \cdot \xi} g(\xi)\;d\xi,
\end{equation*}
so that $\cF$ is unitary on $L^2$.

\subsubsection{Operators}  
We denote the momentum operator by $p = -i \nabla_x$.  Functions of $p$ are defined as Fourier multipliers: 
$$m(p) f = \cF^{-1}(m(\xi) \hat{f}(\xi))$$
In particular, the Dollard propagator $U_D(t,s)$ defined in~\eqref{eqn:Dollard-dyn-def} is a Fourier multiplier with the explicit symbol given in~\eqref{eqn:Dollard-Fourier-rep}.  The full propagator associated with~\eqref{eqn:main} is denoted $U(t,s)$, so that $u(t) = U(t,0) u_0$ solves~\eqref{eqn:main} with $u(0) = u_0$.  For an operator $A$, we write $A^*$ for its adjoint and $\ad_A(B) = [A,B] = AB - BA$ for the commutator.

%\subsubsection{Limits in operator topology}  Following Reed and Simon (Functional Analysis, Section VI.1), we denote by $\slim$ and $\wlim$ the limits in the strong and weak operator topology, respectively.  Concretely, for a family $\{A(t)\}_{t \in \bbR}$ of bounded operators on $L^2$,
%\begin{equation*}
%    A = \slim_{t \to \infty} A(t) \iff \lim_{t \to \infty} \lVert A(t) f - A f \rVert_{L^2} = 0 \text{ for every } f \in L^2,
%\end{equation*}
%\begin{equation*}
%    A = \wlim_{t \to \infty} A(t) \iff \lim_{t \to \infty} \langle g, A(t) f \rangle_{L^2} = \langle g, A f \rangle_{L^2} \text{ for all } f, g \in L^2.
%\end{equation*}

\subsubsection{Cutoff functions}  
We fix a smooth cutoff $F \in C^\infty(\bbR;[0,1])$ such that
\begin{equation*}
    F(s \leq 1) = \begin{cases} 1, & s \leq 1, \\ 0, & s \geq 2, \end{cases} \qquad F(s \geq 1) = 1 - F(s \leq 1),
\end{equation*}
We will always assume that $F$ is defined in such a way that $\sqrt{F(s\geq 1)}$ and $\sqrt{F'(s \geq 1)}$ are smooth functions (we refer the reader to~\cite[Section 2.1]{sofferScatteringLocalizedStates2024a} for an explicit construction having these properties).

%\subsubsection{Decomposition of the potential}  Given $\beta \in (0,1)$ to be chosen later, the exterior and interior parts of $V$ are
%\begin{equation*}
%    \Vext(x,t) = F\!\left(\frac{|x|}{t^\beta} \geq 1\right) V(x,t), \qquad \Vint(x,t) = V(x,t) - \Vext(x,t),
%\end{equation*}
%and the corresponding integrated phase correction is $\nu(\xi,t) = \int_0^t V(2 \xi \tau, \tau)\;d\tau$.

\subsubsection{Constants and parameters} \label{sec:const-and-params}

Throughout the paper, we will use the constants and parameters below:
\begin{itemize}
    \item The constant $\mu$ is the decay rate of the potential $V$.
    \item The parameter $\beta$, which we introduce in~\Cref{sec:Dollard-WO}, is used to separate the potential $V$ into an interior potential $\Vint$ and an exterior potential $\Vext$.  
    \item The parameter $\rho$ controls the rate of growth of $|x - 2pt|$.
\end{itemize}
For ease of reference, we have given the bounds we will assume on these parameters in~\Cref{tab:constants}.  For each $\mu \in \left(\frac{1}{2}, 1\right]$, it is possible to find $\beta$ and $\rho$ satisfying the bounds in~\Cref{tab:constants}; however, as $\mu \to \frac{1}{2}$, $\beta \to 1$ and $\rho \to \frac{1}{2}$, which reflects the fact that the Dollard modifier is insufficient for $\mu \leq \frac{1}{2}$.
\begin{table}[b]
    \begin{tabular}{|c|c|c|}
    \hline
    \textbf{Constant} & \textbf{Value or range} & \textbf{Introduced in. . .}\\\hline\hline
    $\mu$ & $\frac{1}{2} < \mu \leq 1$ & Equation~\eqref{eqn:V-cond}\\\hline
    $\alpha$ & $\alpha > \beta$ ($\alpha \geq \frac{2}{2+\mu}$ in~\Cref{thm:optimal-spread-theorem}) & \Cref{prop:rad-mom-vanishes} (\Cref{thm:optimal-spread-theorem} with $\alpha \geq \frac{2}{2+\mu}$) \\\hline
    $\beta$ & $\frac{3}{2+2\mu} < \beta < 1$ & Equations~\cref{eqn:V-ext-def,eqn:V-int-def} in~\Cref{sec:Dollard-WO}\\\hline
    $\rho$ & $2 - (1+\mu)\beta < \rho < \frac{1}{2}$ & \Cref{prop:ESG} \\\hline
    \end{tabular}
    \caption{\label{tab:constants} The constants and parameters used in the paper.}
\end{table}

\subsection{Preliminary results}

We record here a few standard facts that will be used repeatedly.  %  The first is a consequence of the assumption~\eqref{eqn:V-cond}.
%
%\begin{lemma}[Boundedness of $V(\cdot,t)$ on Sobolev spaces]\label[lemma]{lem:V-Sobolev-bd}
%    Under the hypothesis~\eqref{eqn:V-cond}, multiplication by $V(\cdot,t)$ is bounded on $L^2$ and on $H^1$ uniformly in $t$:
%    \begin{equation*}
%        \sup_{t \in \bbR} \left( \lVert V(\cdot,t) f \rVert_{L^2} + \lVert V(\cdot, t) f \rVert_{H^1} \right) \lesssim \lVert f \rVert_{H^1}.
%    \end{equation*}
%\end{lemma}
%\begin{proof}[Sketch]
%    The bound on $L^2$ is immediate from the $L^\infty$ bound $\sup_{x,t} |V(x,t)| \lesssim 1$.  For the $H^1$ bound, we use $|\nabla_x V| \lesssim \jBra{x}^{-1-\mu} \lesssim 1$ together with the product rule.
%\end{proof}
We begin with a result on the propagator $U(t,s)$ for~\eqref{eqn:main}:

\begin{lemma}[Conservation of $L^2$ and propagation of $H^1$]\label[lemma]{lem:L2-H1-bd}
    The propagator $U(t,s)$ is unitary on $L^2$.  Moreover, if $u_0 \in H^1$, then $u(t) = U(t,0) u_0 \in H^1$ for all $t \in \bbR$ and
    \begin{equation*}
        \lVert u(t) \rVert_{H^1} \lesssim e^{C |t|} \lVert u_0 \rVert_{H^1}
    \end{equation*}
    for some constant $C > 0$ depending only on $V$.
\end{lemma}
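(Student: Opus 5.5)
The plan is to construct $U(t,s)$ by treating $V(\cdot,t)$ as a bounded time-dependent perturbation of the free Laplacian, and then to read off both claims from energy-type identities.

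\textbf{Construction and unitarity.} By~\eqref{eqn:V-cond} with $a=0,1$, $V$ is bounded, and it is at least measurable (indeed continuous enough) in $t$ for the argument below. Set $e^{-is\Delta}$ for the free propagator and write the Duhamel formulation
\begin{equation*}
    u(t) = e^{i(t-s)\Delta} u(s) - i \int_s^t e^{i(t-\tau)\Delta} V(\cdot,\tau) u(\tau)\;d\tau .
\end{equation*}
Since $\sup_t \lVert V(\cdot,t) \rVert_{L^\infty} \lesssim 1$, the right-hand side is a contraction on $C([s,s+T];L^2)$ for $T$ small uniformly in $s$. Iterating gives a global solution and a two-parameter family $U(t,s)$ with the group property $U(t,s)U(s,r)=U(t,r)$. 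For unitarity I would first work with data $u(s)\in H^2$ (or Schwartz), where the solution is regular enough to differentiate. Since $-\Delta+V(\cdot,t)$ is symmetric and $V$ is real,
\begin{equation*}
    \frac{d}{dt}\lVert u(t)\rVert_{L^2}^2 = 2\,\mathrm{Re}\,\langle u, -i(-\Delta+V)u\rangle = 0 .
\end{equation*}
Density then shows that $U(t,s)$ is an isometry. It is invertible with inverse $U(s,t)$, hence unitary.

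\textbf{Propagation of $H^1$.} The same contraction argument runs in $C([s,s+T];H^1)$. It only needs $\lVert V(\cdot,\tau) f\rVert_{H^1}\lesssim \lVert f\rVert_{H^1}$, which follows from $|V|+|\nabla_x V|\lesssim 1$ (again~\eqref{eqn:V-cond} with $a=0,1$) and the product rule. This gives $u\in H^1$ for all times. For the bound, I compute for smooth data
\begin{equation*}
    \frac{d}{dt}\lVert \nabla u\rVert_{L^2}^2 = 2\,\mathrm{Re}\,\langle \nabla u, -i\nabla(V u)\rangle = 2\,\mathrm{Re}\,\langle \nabla u, -i(\nabla V) u\rangle .
\end{equation*}
The term $\langle\nabla u,-iV\nabla u\rangle$ is purely imaginary because $V$ is real, and the Laplacian contribution vanishes by skew-adjointness. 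Hence
\begin{equation*}
    \frac{d}{dt}\lVert\nabla u\rVert^2 \lesssim \lVert\nabla u\rVert\,\lVert u\rVert \le \lVert u\rVert_{H^1}^2 .
\end{equation*}
Combining this with $L^2$ conservation, Gronwall gives $\lVert u(t)\rVert_{H^1}^2\le e^{2C|t|}\lVert u_0\rVert_{H^1}^2$. Density in $H^1$ together with continuity of $U(t,0)$ on $H^1$ (from the fixed-point construction) extends the bound to all $u_0\in H^1$.

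\textbf{Main obstacle.} The main technical point is justifying the differentiation of $\lVert u\rVert^2$ and $\lVert\nabla u\rVert^2$. I would handle it by regularization: either propagate $H^2$ regularity using two derivatives of $V$ from~\eqref{eqn:V-cond}, or mollify with $e^{\delta\Delta}$ and use the Duhamel identity directly to avoid time derivatives. Time measurability of $V$ is implicitly assumed, as the paper does throughout.
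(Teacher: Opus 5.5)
Your proposal is correct and follows the paper's own argument: $L^2$ conservation comes from the symmetry of $-\Delta + V(\cdot,t)$ with $V$ real, and the $H^1$ bound comes from differentiating $\lVert \nabla u \rVert_{L^2}^2$, using only $V \in L^\infty_t W^{1,\infty}_x$, and applying Gronwall. The Duhamel construction of $U(t,s)$ and the regularization and density steps you add supply details that the paper's two-line proof leaves implicit.
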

\begin{proof}
    Conservation of $L^2$ follows from the self-adjointness of $-\Delta + V(\cdot,t)$ for each fixed $t$.  The $H^1$ bound follows from differentiating $\lVert \nabla u \rVert_{L^2}^2$ in time and using the fact that $V \in L^\infty_t W^{1,\infty}_x$ together with Gronwall's inequality.
\end{proof}

For later use, we also record an abstract symmetrization formula for products of operators:
\begin{lemma}\label[lemma]{lem:three-term-sym}
    Suppose $[A,C] = 0$.  Then,
    \begin{equation*}
        A^2BC^2 + C^2 B A^2 = 2(AC)B(AC) + R(A,B,C)
    \end{equation*}
    where
    \begin{equation*}R(A,B,C) = A[[A,B],C]C + C[[C,B],A]{A} + A[C,[C,B]]A + C[A,[A,B]]C\end{equation*}
    involves double commutators of $A$ and $C$ with $B$.
\end{lemma}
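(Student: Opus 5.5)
The identity is purely algebraic, so I will prove it by direct rearrangement. The only input is the commutation relation $AC = CA$, which lets me move factors of $A$ and $C$ past each other freely.

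\textbf{Step 1: rewrite the left-hand side minus the main term.} Since $AC=CA$, I have $ACBAC = CABCA$. Set
\begin{equation*}
    X := A^2BC^2 + C^2BA^2 - 2(AC)B(AC).
\end{equation*}
I will write the two copies of $ACBAC$ differently, as $A(CBA)C$ and $C(ABC)A$. This gives
\begin{equation*}
    X = A\,D\,C - C\,D\,A, \qquad D := ABC - CBA.
\end{equation*}
To check this, note that $A(ABC)C = A^2BC^2$ and $C(CBA)A = C^2BA^2$.

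\textbf{Step 2: express $D$ through single commutators.} Write $AB = [A,B] + BA$ and use $BAC = BCA$. Then
\begin{equation*}
    D = [A,B]C + BCA - CBA = [A,B]C - [C,B]A.
\end{equation*}
Substituting this into Step 1 gives
\begin{equation*}
    X = \bigl(A[A,B]C^2 - C[A,B]CA\bigr) + \bigl(C[C,B]A^2 - A[C,B]AC\bigr).
\end{equation*}

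\textbf{Step 3: match with $R(A,B,C)$.} Expanding the double commutators, and using $AC = CA$ to cancel the middle terms, I get
\begin{equation*}
    A[[A,B],C]C + C[A,[A,B]]C = A[A,B]C^2 - AC[A,B]C + CA[A,B]C - C[A,B]AC = A[A,B]C^2 - C[A,B]AC.
\end{equation*}
This equals the first bracket in Step 2, because $CA = AC$ in the trailing factor. By the symmetric computation with the roles of $A$ and $C$ exchanged, $C[[C,B],A]A + A[C,[C,B]]A$ equals the second bracket. Summing the two pairs gives $X = R(A,B,C)$, which is the claim.

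No step here is a real obstacle. The one point that needs care is choosing, in Step 1, the two different but equal forms of $ACBAC$, so that $X$ collapses to $ADC - CDA$. After that, everything is bookkeeping with $AC = CA$.
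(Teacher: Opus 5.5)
Your proof is correct: the factorization $X = ADC - CDA$ with $D = [A,B]C - [C,B]A$ checks out, and pairing $A[[A,B],C]C$ with $C[A,[A,B]]C$ (and symmetrically $C[[C,B],A]A$ with $A[C,[C,B]]A$) recovers each bracket exactly once $AC=CA$ is used. The paper states this lemma without proof, treating it as a routine expansion, so your argument supplies the omitted verification.
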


\begin{remark}
    In practice, we will apply~\Cref{lem:three-term-sym} with $B = \partial_x^2$ and multiplication operators $A = f(\lambda^{-1} X), C = g(\lambda^{-1} X)$ for $f,g \in W^{2,\infty}$.  In this case, the remainder term $R(A,B,C)$ satisfies
    \begin{equation*}
        \lVert R(A,B,C) \rVert_{L^2 \to L^2} \lesssim \lambda^{-2}
    \end{equation*}
\end{remark}

\section{Existence of the Dollard wave operator} \label{sec:Dollard-WO}

In this section, we will prove the existence of the (weak) adjoint wave operator
\begin{equation}\label{eqn:full-adj-WO-def}
    \Omega^*_D = \wlim_{t \to \infty} U_D(0,t) U(t,0)
\end{equation}
\begin{proposition}\label[proposition]{prop:weak-WO}
    The limit in~\eqref{eqn:full-adj-WO-def} defining $\Omega^*_D$ exists in $L^2$.  Moreover, if $u(t) = U(t,0) u_0$ satisfies the energy bound~\eqref{eqn:energy-bound}, then $\Omega^*_D u_0 \in H^1$.
\end{proposition}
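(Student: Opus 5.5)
The plan is to get the weak limit~\eqref{eqn:full-adj-WO-def} by duality from the strong limit $\Omega_D = \slim_{t\to\infty} U(0,t)U_D(t,0)$. For $f,g \in L^2$ we have
\begin{equation*}
    \langle U_D(0,t)U(t,0) f, g\rangle = \langle f, U(0,t)U_D(t,0) g\rangle .
\end{equation*}
So if $U(0,t)U_D(t,0)g$ converges in $L^2$ for every $g$, then the left side converges for all $f,g$. Since the operators $U_D(0,t)U(t,0)$ are unitary, hence uniformly bounded, this defines a bounded weak limit, namely $\Omega_D^* = (\slim U(0,t)U_D(t,0))^*$. By uniform boundedness, it also suffices to prove strong convergence of $U(0,t)U_D(t,0)g$ for $g$ in a dense class. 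I take $g$ Schwartz with $\hat g \in C_c^\infty(\bbR^3\setminus\{0\})$, so that $c\le|\xi|\le C$ on $\supp \hat g$.

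For such $g$ I use Cook's method. Differentiating with~\eqref{eqn:full-dyn-def} and~\eqref{eqn:Dollard-dyn-def} gives
\begin{equation*}
    \tfrac{d}{dt}U(0,t)U_D(t,0)g = iU(0,t)\big(V(x,t)-V(2pt,t)\big)U_D(t,0)g .
\end{equation*}
It therefore suffices to show $\lVert (V(x,t)-V(2pt,t))U_D(t,0)g\rVert_{L^2} \in L^1_t([1,\infty))$. Following the parameters in~\Cref{tab:constants}, I split $V = \Vint + \Vext$ with cutoff at $|x|\sim t^\beta$.

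\emph{Interior part.} By~\eqref{eqn:Dollard-Fourier-rep}, $U_D(t,0)g$ is an oscillatory integral with phase $x\cdot\xi - t|\xi|^2 - \nu(\xi,t)$, where $\nu(\xi,t)=\int_0^t V(2\xi\tau,\tau)\,d\tau$. On $\supp\hat g$ the condition~\eqref{eqn:V-cond} gives $|\nabla_\xi \nu| \lesssim t^{1-\mu}\ll t$, and the higher $\xi$-derivatives of $\nu$ obey the same bound. Hence in the region $|x|\le 2t^\beta \ll ct$ the phase has gradient $\gtrsim t$. Repeated integration by parts (nonstationary phase) then gives $\lVert F(|x|\lesssim t^\beta) U_D(t,0)g\rVert_{L^2} = O(t^{-N})$. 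Both $\Vint(x,t)U_D g$ and the corresponding piece of $V(2pt,t)U_D g$ are harmless. Indeed, $|V(2\xi t,t)|\lesssim t^{-\mu}$ on $\supp\hat g$, and I will compare it with the exterior quantity directly.

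\emph{Exterior part.} Here I Taylor expand $\Vext(x,t)$ around $x=2pt$. Morally, the conjugated position operator is
\begin{equation*}
    U_D(0,t)\,x\,U_D(t,0) = x + 2pt + \nabla_\xi\nu(p,t),
\end{equation*}
so on $U_D(t,0)g$ we have $x-2pt = U_D(t,0)\big(x+\nabla_\xi\nu(p,t)\big)U_D(0,t)$. This has size $O(t^{1-\mu})$ in $L^2$ on our $g$, since $xg\in L^2$. The first-order Taylor term $\nabla \Vext \cdot (x-2pt)$, with $|\nabla\Vext|\lesssim t^{-1-\mu}$ in the region $|x|\sim t$, is then $O(t^{-1-\mu}\cdot t^{1-\mu}) = O(t^{-2\mu})$. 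This is integrable exactly because $\mu>1/2$. The second-order term is smaller still, giving $t^{-2-\mu}\cdot t^{2-2\mu}$.

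The step I expect to be the main obstacle is making this Taylor expansion rigorous, because $x$ and $p$ do not commute. I would do it on the Fourier side: write $\big(\Vext(x,t)-V(2pt,t)\big)U_D g$ as an oscillatory integral and use stationary phase localized to $|x-2\xi t|\lesssim t^{\rho}$. Beyond that scale the phase is nonstationary and gives rapid decay, with $\rho<1/2$ chosen as in~\Cref{tab:constants}. Within that scale, pointwise Taylor bounds for $V$ from~\eqref{eqn:V-cond} (where the many derivatives assumed are used) give the $O(t^{-2\mu})$-type decay. The commutator corrections coming from the $\hbar$-like scale $t^{-1}$ are lower order.

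Finally, for the $H^1$ claim: under~\eqref{eqn:energy-bound}, $U_D(0,t)$ is a Fourier multiplier of modulus one and so is an isometry on $H^1$. Thus $U_D(0,t)u(t)$ is bounded in $H^1$, and some sequence $t_n\to\infty$ has a weak-$H^1$ limit $w$. By uniqueness of weak $L^2$ limits, $w=\Omega_D^*u_0$, and hence $\Omega_D^*u_0\in H^1$.
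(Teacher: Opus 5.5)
Your reduction (duality from the strong limit $\Omega_D$, the dense class $\hat g\in C_c^\infty(\mathbb{R}^3\setminus\{0\})$, Cook's method, nonstationary phase on $|x|\lesssim t^\beta$) is the paper's. Your $H^1$ argument is correct and simpler than the paper's frequency-truncation argument: $U_D(0,t)u(t)$ is bounded in $H^1$, and any weak-$H^1$ subsequential limit is also a weak-$L^2$ limit, hence equals $\Omega_D^*u_0$.

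\textbf{The gap.} It is in the exterior estimate, which is the real content of \Cref{cor:full-WO}. You localize the $\xi$-integral to $|x-2\xi t|\lesssim t^\rho$ with $\rho<1/2$ and assert that beyond this scale nonstationary phase gives rapid decay. This is false. With $\Phi=x\cdot\xi-t|\xi|^2-\nu(\xi,t)$, each integration by parts produces the factor $|D_\xi^2\Phi|/|\nabla_\xi\Phi|^2\sim t/|x-2\xi t|^2$. Differentiating a cutoff of width $t^{\rho-1}$ in $\xi$ costs $t^{1-\rho}/|x-2\xi t|$. In the band $t^\rho\lesssim|x-2\xi t|\lesssim t^{1/2}$ both factors are $\gtrsim 1$ (up to $t^{1-2\rho}$), so you lose rather than gain. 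Nonstationary phase only helps once $|x-2\xi t|\gg t^{1/2}$, the uncertainty scale of the quadratic phase. That is why the paper cuts at $|\xi-\xi_{\mathrm{stat}}|\sim t^{-1/2}$ and decomposes dyadically beyond it. The $\rho<1/2$ of \Cref{tab:constants} is the growth exponent of $|x-2pt|$ under $U_{\mathrm{ext}}$ in \Cref{prop:ESG}, used only to construct $\Omega_{D,\mathrm{ext}}$; it plays no role here. Relatedly, a pointwise bound on the integrand controls $|x-2\xi t|$ by the localization scale, not by the $L^2$ size $t^{1-\mu}$ of $(x-2pt)U_D(t,0)g$. So the $O(t^{-2\mu})$ heuristic is not what such an argument delivers.

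\textbf{A repair.} The idea can be fixed, and the fixed version is simpler than the paper's proof. Cut at $|x-2\xi t|=t^{1/2+\delta}$ with $0<\delta<(\mu-\frac12)/4$.

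In the far region, integrate by parts many times directly against the amplitude $F\bigl(t^{-1/2-\delta}|x-2\xi t|\geq 1\bigr)\bigl(V_{\mathrm{ext}}(x,t)-V(2\xi t,t)\bigr)\hat g(\xi)$. Do not first write the difference as $(x-2\xi t)\cdot(\cdots)$. The integration by parts works because:
$V_{\mathrm{ext}}(x,t)$ is independent of $\xi$;
$D_\xi^a[V(2\xi t,t)]=O(t^{-\mu})$ on $\supp\hat g$ by \eqref{eqn:V-cond};
$|\nabla_\xi\nu|\lesssim t^{1-\mu}\ll t^{1/2}$ keeps $|\nabla_\xi\Phi|\sim|x-2\xi t|$.
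Each step then gains $t^{-2\delta}$, and $|x|^{-1}$ once $|x|\gg t$, giving arbitrary polynomial decay in $L^2_x$.

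In the near region, every point of the segment joining $x$ and $2\xi t$ has modulus $\gtrsim t$. The mean value theorem then gives $|V_{\mathrm{ext}}(x,t)-V(2\xi t,t)|\lesssim t^{1/2+\delta}t^{-1-\mu}$. Multiplying by the $\xi$-volume $t^{-3/2+3\delta}$ and taking $L^2$ over $|x|\lesssim t$ gives $O(t^{-1/2-\mu+4\delta})$, which is integrable precisely because $\mu>1/2$.

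This bypasses the paper's conversion of $x-2\xi t$ into $\nabla_\xi\Phi+\nabla_\xi\nu$. That conversion forces the paper to control $\mathcal{V}_{\mathrm{ext}}$ along segments that may pass near the origin (the regime $j\ge J$, where $\beta>3/(2+2\mu)$ enters). With your stated scale, however, the exterior step does not go through.
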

In fact, we will obtain this result as a consequence of the following corollary:
\begin{corollary}\label[corollary]{cor:full-WO}
    The wave operator
    \begin{equation}\label{eqn:full-WO-def}
        \Omega_D = \slim_{t \to \infty} U(0,t) U_D(t,0)
    \end{equation}
    exists in $L^2$.
\end{corollary}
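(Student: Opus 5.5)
We prove existence of the strong limit by Cook's method on a dense set. Take $g$ with $\hat g\in C_c^\infty(\bbR^3\setminus\{0\})$, so that $|\xi|\in[c,C]$ on $\supp\hat g$. Such $g$ are dense in $L^2$, and $U(0,t)U_D(t,0)$ is uniformly bounded (both factors are unitary), so it suffices to treat such $g$. Differentiating with \eqref{eqn:full-dyn-def} and \eqref{eqn:Dollard-dyn-def} gives
\begin{equation*}
    \frac{d}{dt} U(0,t)U_D(t,0)g = i\,U(0,t)\bigl(V(x,t) - V(2pt,t)\bigr)U_D(t,0)g .
\end{equation*}
Hence it is enough to show $\lVert (V(x,t)-V(2pt,t))U_D(t,0)g\rVert_{L^2}\in L^1_t([1,\infty))$. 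The difference $V(x,t)-V(2pt,t)$ vanishes at leading order along the classical trajectories $x\approx 2\xi t$, and we must show the remainder is $O(t^{-1-\delta})$.

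\textbf{Step 1: localization along rays.} From \eqref{eqn:Dollard-Fourier-rep}, $U_D(t,0)g$ is an oscillatory integral with phase $x\cdot\xi - t|\xi|^2-\nu(\xi,t)$, where $\nu(\xi,t)=\int_0^tV(2\xi\tau,\tau)d\tau$. On the support of $\hat g$, \eqref{eqn:V-cond} gives $|D_\xi^a\nu|\lesssim \int_1^t \tau^{a}\jBra{\tau}^{-a-\mu}d\tau\lesssim t^{1-\mu}$ (with a logarithm when $\mu=1$), and this is $o(t)$. So the critical point satisfies $x=2\xi t+O(t^{1-\mu})$. Integration by parts in $\xi$ (nonstationary phase, using finitely many derivatives of $V$) then gives the following. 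With $\rho$ as in \Cref{tab:constants}, take a cutoff $\chi_t = F(|x-2pt|\ge t^{\rho'})$ for some $\rho'\in(1-\mu,\tfrac12)$, which is possible since $\mu>1/2$. Then
\begin{equation*}
    \lVert F(|x-2pt|\ge t^{\rho'})\,U_D(t,0)g\rVert_{L^2}\lesssim_N t^{-N}.
\end{equation*}
Equivalently, I would conjugate: $U_D(0,t)\,x\,U_D(t,0) = x+2pt+\nabla_\xi\nu(p,t)$, and $\nabla_\xi\nu = O(t^{1-\mu})$ on the support. Moments of $x-2pt$ applied to $U_D(t,0)g$ therefore grow at most like $t^{1-\mu}$, and Chebyshev-type bounds with higher moments give the claim.

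\textbf{Step 2: the commutator estimate on the localized region.} On $|x-2\xi t|\lesssim t^{\rho'}$ with $|\xi|\sim1$, we have $|x|\sim t$. I would write $V(x,t)-V(2pt,t)$ by a first-order Taylor/symbol expansion, interpreting $V(2pt,t)$ as a Fourier multiplier and comparing it to the multiplication operator via the pseudodifferential calculus. The leading error is $\nabla V(\cdot,t)\cdot(x-2pt)$ plus commutator terms. Since $|\nabla V|\lesssim \jBra{x}^{-1-\mu}\sim t^{-1-\mu}$ there, we get
\begin{equation*}
    \lVert (V(x,t)-V(2pt,t))U_D(t,0)g\rVert\lesssim t^{-1-\mu}t^{\rho'}+t^{-N}.
\end{equation*}
This is integrable because $\rho'<\mu$. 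Commutator corrections between $x$ and $p$ carry extra factors $t^{-1}$ or $t^{-1}\cdot t^{-\mu}$ and are harmless. An alternative decomposition is $V=\Vint+\Vext$ with cutoff at $|x|\sim t^\beta$. On the support of $U_D(t,0)g$, $\Vint$ contributes only rapidly decaying tails, and $\Vext$ satisfies improved symbol bounds $t^{-\beta(a+\mu)}$ that make the Taylor expansion cleaner.

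\textbf{Main obstacle.} The delicate point is Step 2: making rigorous the comparison between the multiplication operator $V(x,t)$ and the Fourier multiplier $V(2pt,t)$ acting on a state concentrated near $x=2pt$. I would handle it by writing $V(2pt,t)U_D(t,0)g$ explicitly in Fourier space. I would then expand $V(x,t)$ around $x=2\xi t+\nabla_\xi\nu$ inside the oscillatory integral, integrating by parts to convert each factor $(x-2\xi t-\nabla_\xi\nu)$ into $\xi$-derivatives of $\hat g e^{-i\nu}$. Each such derivative costs at most $t^{1-\mu}$. The extra shift $\nabla_\xi\nu$ contributes $t^{-1-\mu}\cdot t^{1-\mu}=t^{-2\mu}$, which is integrable precisely because $\mu>1/2$; this is where the hypothesis is used. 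Cook's method then gives existence of the strong limit on the dense set, and hence on all of $L^2$.
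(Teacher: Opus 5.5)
Your Step 1 is correct. The identity $U_D(0,t)(x-2pt)U_D(t,0)=x+\nabla_\xi\nu(p,t)$ shows that $\lVert |x-2pt|^n U_D(t,0)g\rVert_{L^2}$ grows like $t^{n(1-\mu)}$, up to logarithms when $\mu=1$ and for as many $n$ as $V$ has derivatives. This gives the phase-space localization you claim.

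\textbf{The gap is in Step 2}, which is where essentially all of the work in this corollary lies. The bound $|\nabla V|\lesssim t^{-1-\mu}$ holds only at pairs $(x,\xi)$ with $|x-2\xi t|\ll t$. The quantity you must estimate, $\int e^{i\Phi}\bigl(V(x,t)-V(2\xi t,t)\bigr)\hat g(\xi)\,d\xi$, integrates over all of $\supp\hat g$ for every $x$, and the $L^2$ localization of Step 1 does not localize this symbol. In the scheme of your last paragraph, the Taylor coefficient $\int_0^1\nabla V\bigl(\lambda x+(1-\lambda)(2\xi t+\nabla_\xi\nu)\bigr)\,d\lambda$ is also evaluated where the segment passes within $O(1)$ of the origin (e.g.\ $|x|\lesssim 1$, or $x$ antiparallel to $\xi$). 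There the coefficient is only $O(t^{-1})$. Moreover, each $\xi$-derivative produced by integrating by parts costs a factor comparable to $t$, not $t^{1-\mu}$, because it brings down $t\,D^{a+1}V$ evaluated near the origin. Since $|\nabla_\xi\Phi|\sim t$ there, the further integrations by parts gain nothing. But you need them to reach the $t^{-3/2}$ dispersive decay, without which pointwise bounds cannot be summed in $L^2_x$ over $|x|\lesssim t$. The same problem undercuts your claim that commutator corrections are $O(t^{-1})$: that claim presupposes $V$ varies on scale $t$, which fails near the origin.

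This far-from-stationary region is exactly what the paper's proof is built around. $\Vint$ is removed by non-stationary phase. For $\Vext$, the ball $|\xi-x/(2t)|\le t^{-1/2}$ and the dyadic shells around it are treated separately, and \Cref{lem:CVext-nonstat-lemma} controls $\CVext$ when the segment may enter $|z|\lesssim t^\beta$. Your passing mention of $\Vext$ at scale $t^\beta$ does not remove the difficulty: with $\beta<1$ that dyadic analysis is still needed.

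A cheaper repair is to cut at scale $t$ instead:
\begin{itemize}
\item Let $c_0=\min_{\supp\hat g}|\xi|$ and $\tilde V=F\bigl(4|x|/(c_0t)\ge 1\bigr)V$. Then $V(2pt,t)U_D(t,0)g=\tilde V(2pt,t)U_D(t,0)g$ exactly.
\item Also $\lVert (V-\tilde V)(x,t)U_D(t,0)g\rVert_{L^2}=O(t^{-N})$, because $|\nabla_\xi\Phi|\gtrsim t$ on $|x|\le c_0t/2$.
\item Since $|D_x^a\tilde V|\lesssim t^{-a-\mu}$ everywhere, the first-order Taylor coefficient $\tilde{\mathcal V}$ (defined like $\CVext$ with $\tilde V$ in place of $\Vext$) satisfies $|D^a_\xi\tilde{\mathcal V}|\lesssim t^{-1-\mu}$ uniformly in $(x,\xi)$.
\item Write $x-2\xi t=\nabla_\xi\Phi+\nabla_\xi\nu$ and integrate by parts once. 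This leaves amplitudes of size $t^{-1-\mu}$ and $t^{-2\mu}$ in $C^k_\xi$.
\item A uniform stationary-phase bound converts these into $L^2_x$ bounds $O(t^{-1-\mu}+t^{-2\mu})$, which are integrable since $\mu>1/2$. For example, the paper's ball-plus-dyadic-shell argument gives $t^{-3/2}$ pointwise and decay for $|x|\gg t$, now with no loss from $\tilde{\mathcal V}$.
\end{itemize}
With this repair your Step 1 is no longer needed.
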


Assuming for the moment that~\Cref{cor:full-WO} holds, the proof of~\Cref{prop:weak-WO} follows from functional analytic arguments:
\begin{proof}[Proof of~\Cref{prop:weak-WO} (assuming~\Cref{cor:full-WO})]
    Suppose $u_0 \in L^2$.  To prove the existence of $\Omega^*_D$, it suffices to prove that the weak limit of $U_D(0,t) U(t,0) u_0$ exists as $t$ goes to infinity. Now, since both $U$ and $U_D$ are unitary, $U_D(0,t) U(t,0) u_0$ is bounded in $L^2$, so by Banach-Alaoglu it suffices to prove that all weak subsequential limits agree.  In particular, the result will follow if we can show that
    \begin{equation*}
        \lim_{t \to \infty} \left\langle \phi, U_D(0,t) U(t,0) u_0\right\rangle_{L^2}
    \end{equation*}
    exists for all $\phi \in L^2$.  But~\Cref{cor:full-WO} implies that 
    \begin{equation*}
        \lim_{t \to \infty} \left\langle \phi, U_D(0,t) U(t,0) u_0\right\rangle_{L^2} = \langle \Omega_D \phi, u_0 \rangle_{L^2}
    \end{equation*}
    To prove that $\Omega^*_D u_0 \in H^1$ if $U(t,0)u_0$ satisfies the energy bound~\eqref{eqn:energy-bound}, note that for any $\phi \in H^1$,
    \begin{equation*}\begin{split}
        \langle \phi, \Omega^*_D(t) u_0 \rangle_{H^1} =& \langle P_{\leq j} U_D(t,0) \phi, U(t,0) u_0 \rangle_{H^1} + \langle P_{> j} U_D(t,0) \phi, U(t,0) u_0 \rangle_{H^1}\\
        =& \langle  U_D(t,0) \jBra{-\Delta} P_{\leq j} \phi, U(t,0) u_0 \rangle_{L^2} + o_{j \to \infty} \left( \lVert \phi \rVert_{H^1} \lVert U(t,0) u_0 \rVert_{L^\infty_t H^1_x} \right)\\
        =& \langle \Omega_D(t) \jBra{-\Delta} P_{\leq j} \phi, u_0 \rangle_{L^2} + o_{j \to \infty} \left( \lVert \phi \rVert_{H^1} \lVert U(t,0) u_0 \rVert_{L^\infty_t H^1_x} \right)
    \end{split}\end{equation*}
    uniformly in $t$, where
    \begin{equation*}
        \Omega^*_D(t) = U_D(0,t) U(t,0),\qquad \Omega_D(t) = U(0,t) U_D(t,0)
    \end{equation*}
    Thus, for any two times $t_1$ and $t_2$, we see that
    \begin{equation*}\begin{split}
        \left|\langle \phi, \left(\Omega^*_D(t_2) - \Omega^*_D(t_1)\right) u_0 \rangle_{H^1}\right| =&  \left\langle \left(\Omega_D(t_2)  - \Omega_D(t_1)\right)\jBra{-\Delta} P_{\leq j} \phi, u_0 \right\rangle_{L^2}\\
        &+ o_{j \to \infty} \left( \lVert \phi \rVert_{H^1} \rVert_{L^2} \lVert U(t,0) u_0 \rVert_{L^\infty_t H^1_x} \right)
    \end{split}\end{equation*}
    %{\color{red}XW: Maybe we can directly use the relation $\|U_D(0,t)U(t,0)u_0\|_{H^1}=\|U(t,0)u_0\|_{H^1}$ to conclude $\Omega_D^*u_0\in H^1$ since $U_D(0,t)$ is a pure differential operator.  GS: I don't think this is enough -- that gives that $\Omega^*_D(t) u_0$ is uniformly bounded in $H^1$, but not the required convergence.} 
    In particular, by first choosing $j$ large, and then choosing $t_1$, $t_2$ large enough (depending only on $j$ and $\phi$), we can make this term as small as we please, so $\langle \phi, \Omega^*_D(t) u_0 \rangle_{H^1}$ is Cauchy, and $\Omega^*_D u_0 = \wlim_{t \to \infty} \Omega^*_D(t) u_0$ is in $H^1$.
\end{proof}
Now, we will prove~\Cref{cor:full-WO}.  Since the propagators $U(t,s)$ and $U_D(t,s)$ are unitary, it suffices to prove that
\begin{equation}\label{eqn:WO-limit-2}
    \slim_{t \to \infty} U(0,t) U_D(t,0) \phi
\end{equation}
exists in $L^2$ for a dense set of $\phi$.  In particular, we can assume that $\phi$ is Schwartz class and that
\begin{equation}\label{eqn:phi-supp-hypo}
    \supp \hat{\phi} \subset \bbR^3 \setminus B_\epsilon(0)
\end{equation}
for some $\epsilon > 0$.

To prove that~\eqref{eqn:WO-limit-2} exists, we will use Cook's method.  Evidently,
\begin{equation*}\begin{split}
    U(0,t) U_D(t,0) \phi    =& \phi + \int_0^t \frac{d}{ds} \left(U(0,s) U_D(s,0) \phi\right) \;ds\\
                            =& \phi + i\int_0^t U(0,s) (V(x,s) - V(2ps, s)) U_D(s,0) \phi\;ds
\end{split}\end{equation*}
so it suffices to prove that $\lVert (V(x,t) - V(2pt,t)) U_D(t,0) \phi \rVert_{L^2_x}$ is integrable in $t$.  Since $V$ is bounded, we immediately have the bound
\begin{equation*}
    \lVert (V(x,t) - V(2pt,t)) U_D(t,0) \phi \rVert_{L^2_x} \lesssim \lVert \phi \rVert_{L^2_x}
\end{equation*}
so we will be finished once we show that this expression also decays sufficiently rapidly as $t \to \infty$.

Here and later in the paper, it will be helpful to divide the potential $V(x,t)$ into an exterior potential
\begin{equation}\label{eqn:V-ext-def}
    \Vext(x,t) = F\left(\frac{|x|}{t^\beta} \geq 1\right) V(x,t)
\end{equation}
and an interior potential
\begin{equation}\label{eqn:V-int-def}
    \Vint(x,t) = V(x,t) - \Vext(x,t)
\end{equation}
Throughout the paper, we will assume that $\beta \in \left(\frac{3}{2+2\mu}, 1\right)$ (c.f.~\Cref{tab:constants}).

Based on the support assumptions on $\phi$, for $t$ sufficiently large we have that $\Vint(2pt,t) \phi = 0$, so we can rewrite~\eqref{eqn:Dollard-Fourier-rep} as
\begin{equation*}\begin{split}
    (V(x,t) - V(2pt,t)) U_D(t,0) \phi =  \Vint(x,t) U_D(t,0) \phi + (\Vext(x,t) - \Vext(2pt,t))U_D(t,0) \phi
\end{split}\end{equation*}
We now show that each term on the right decays in $L^2$:

\subsection{Interior bounds}

We first obtain bounds for $\Vint(x,t) U_D(t,0)\phi$.  Using~\eqref{eqn:Dollard-Fourier-rep}, we see that this term can be rewritten as
\begin{equation}\label{eqn:int-bound-osc-int}
    \Vint(x,t) U_D(t,0)\phi = \frac{\Vint(x,t)}{(2\pi)^{3/2}} \int_{\bbR^3}e^{i\Phi}\hat{\phi}(\xi)\;d\xi
\end{equation}
where the phase $\Phi$ is given by
\begin{equation*}
    \Phi = \Phi(\xi;x,t) = x \cdot \xi-t |\xi|^2 - \int_0^t V(2\xi \tau, \tau)\;d\tau
\end{equation*}
Differentiating in $\xi$, we see that
\begin{equation*}
    \nabla_\xi \Phi = x - 2t\xi - \nabla_\xi \nu(\xi,t)
\end{equation*}
where
\begin{equation*}
    \nu(\xi,t) = \int_0^t V(2\xi \tau, \tau)\;d\tau
\end{equation*}
The term $\nu$ provides the Dollard correction in the phase.  For our proof, it will be important that $\nu$ is lower-order from the perspective of stationary phase estimates.  More precisely, based on the decay hypothesis on $V$ and $\nabla V$, we have that for $t \geq 1$, $\xi \in \supp \phi$,
\begin{equation*}
    |\nabla_\xi \nu(\xi,t)| = \left| \int_0^t 2\tau \nabla_x V(2\xi \tau, \tau)\;d\tau\right| \lesssim_\phi |\xi|^{-1-\mu} \begin{cases}
        t^{1-\mu} & \mu < 1\\
        \log(2+t) & \mu =1
    \end{cases}
\end{equation*}
In particular, for $t$ sufficiently large, 
\begin{equation}\label{eqn:nu-deriv-bound}
    \sup_{\xi \in \supp \phi} |\nabla_\xi \nu(\xi,t)| < \frac{1}{2} t^{1/2}
\end{equation}
In particular, for $x \in \supp \Vint(x,t)$, $\xi \in \supp \hat{\phi}$, we see that
\begin{equation*}
    |\nabla_\xi \Phi| \gtrsim t|\xi|
\end{equation*}
Defining $\mathcal{L} = -i\frac{\nabla_\xi \Phi}{|\nabla_\xi \Phi|^2} \cdot \nabla_\xi$, we see that $\mathcal{L} e^{i\Phi} =e^{i\Phi}$, so repeated integration by parts yields the estimate
\begin{equation*}\begin{split}
    \left| \Vint(x,t) U_D(t,0) \phi(x) \right| \lesssim& |\Vint(x,t)| \int_{\bbR^3} |(\mathcal{L}^*)^n \hat{\phi}(\xi)|\;d\xi\\
                                            \lesssim_{n,\phi}& |\Vint(x,t)| t^{-n}
\end{split}\end{equation*}
From here, a simple volume bound gives that
\begin{equation*}
    \lVert \Vint(x,t) U_D(t,0) \phi \rVert_{L^2} \lesssim_{n,\phi} t^{\left(\frac{3}{2}- \mu\right)\beta - n}
\end{equation*}
so this term decays rapidly.
\subsection{Exterior bounds}\label{sec:exterior-bounds}

\subsubsection{A preliminary decomposition}

As before, we rewrite the quantity we are considering as an oscillatory integral: 
\begin{equation*}
    \left(\Vext(x,t) - \Vext(2pt,t)\right) U_D(t,0) \phi = \frac{1}{(2\pi)^{3/2}} \int_{\bbR^3}e^{i\Phi}\left(\Vext(x,t) - \Vext(2\xi t,t)\right) \hat{\phi}(\xi)\;d\xi
\end{equation*}
By the Fundamental Theorem of Calculus, we have that
\begin{equation}\label{eqn:Dollard-cancellation}\begin{split}
\Vext(x,t) - \Vext(2\xi t,t) =& (x - 2 \xi t) \cdot \int_0^1 \nabla_x \Vext\left(\lambda x + 2(1-\lambda)\xi t\right)\; d\lambda\\
=& (\nabla_\xi \Phi + \nabla_\xi \nu) \cdot \int_0^1 \nabla_x \Vext\left(\lambda x + 2(1-\lambda)\xi t\right)\; d\lambda\\
=:& (\nabla_\xi \Phi + \nabla_\xi \nu) \cdot \CVext(x,\xi,t)
\end{split}\end{equation}
which allows us to re-express the previous integral as
\begin{equation}\label{eqn:V-ext-expr-1}
    \left(\Vext(x,t) - \Vext(2pt,t)\right) U_D(t,0) \phi = \frac{1}{(2\pi)^{3/2}} \int_{\bbR^3}e^{i\Phi}\left(\nabla_\xi \Phi + \nabla_\xi \nu\right) \cdot \CVext(x,\xi,t) \hat{\phi}(\xi)\;d\xi
\end{equation}
In this integral, we can think of $\nabla_\xi \nu$ as a lower-order error term.  More precisely, we have the following theorem:
\begin{lemma}\label[lemma]{lem:nu-deriv-slow-growth}
    For each $\epsilon > 0$, there exist constants $C_\epsilon$ and $T_\epsilon$ such that for all $t \geq T_\epsilon$, we have the bound
    \begin{equation*}
        \left|D_\xi^a \nu(\xi,t) \right| \leq C_\epsilon t^{1-\mu}
    \end{equation*}
    uniformly for $|\xi| > \epsilon$ and $a = 1,2,3$.
\end{lemma}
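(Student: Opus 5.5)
Differentiate under the integral sign in $\nu(\xi,t) = \int_0^t V(2\xi\tau,\tau)\,d\tau$. For $a \geq 1$, the chain rule gives
\begin{equation*}
    D_\xi^a \nu(\xi,t) = \int_0^t (2\tau)^a (D_x^a V)(2\xi\tau,\tau)\;d\tau .
\end{equation*}
By the symbol condition~\eqref{eqn:V-cond}, $|D_x^a V(y,\tau)| \lesssim \jBra{y}^{-a-\mu}$. With $y = 2\xi\tau$ and $|\xi| > \epsilon$ this gives $\jBra{2\xi\tau} \geq \max(1, 2\epsilon\tau)$, so the integrand is bounded by
\begin{equation*}
    (2\tau)^a \jBra{2\xi\tau}^{-a-\mu} \lesssim \min\left( (2\tau)^a,\; \epsilon^{-a-\mu} (2\tau)^{-\mu} \right).
\end{equation*}
The bound is uniform in $|\xi| > \epsilon$, since a larger $|\xi|$ only improves the decay.

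Split the $\tau$-integral at $\tau = 1$. On $[0,1]$ the integrand is bounded by a constant, contributing $O(1)$. On $[1,t]$ it is bounded by $C\epsilon^{-a-\mu}\tau^{-\mu}$, whose integral is $\lesssim \epsilon^{-a-\mu} t^{1-\mu}/(1-\mu)$ when $\mu < 1$.

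The only delicate point is the endpoint case $\mu = 1$. There the same computation gives $\log t$, not $t^{1-\mu} = 1$, so the lemma as literally stated would fail unless the potential has extra structure (for example Coulomb $|x|^{-1}$ gives exactly $\log t$ growth of $\nabla_\xi\nu$). I would handle this by reading the claimed bound as $t^{1-\mu}$ for $\mu<1$ and $\log(2+t)$ for $\mu=1$, matching the earlier display for $\nabla_\xi\nu$. In all later uses, $t^{1-\mu}$ only needs to be $o(t^{1/2})$ or similar, and $\log t$ suffices. Alternatively, I would absorb the discrepancy by replacing $\mu$ with a slightly smaller $\mu' \in (1/2,\mu)$, which the symbol condition trivially permits and which preserves the parameter ranges in~\Cref{tab:constants} after a small adjustment.

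With either reading, choose $T_\epsilon \geq 2$ so that the $O(1)$ contribution from $[0,1]$ is dominated by the main term: $1 \leq t^{1-\mu}$ (or $\leq \log t$) for $t \geq T_\epsilon$. Then take $C_\epsilon \sim \epsilon^{-3-\mu}$ times the constants from~\eqref{eqn:V-cond}, maximized over $a = 1,2,3$. This yields the stated bound uniformly in $|\xi| > \epsilon$. Only derivatives up to order $3 \leq 8$ of $V$ are used, so the hypothesis suffices.
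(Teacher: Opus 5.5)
Your proof is correct and is essentially the paper's argument: you differentiate under the integral, apply the symbol bound, and split the $\tau$-integral into a bounded initial piece and a tail of size $\epsilon^{-a-\mu}\int^t \tau^{-\mu}\,d\tau$, where the paper splits at $T_\epsilon = 100\epsilon^{-1}$ rather than at $\tau = 1$. Your caveat at $\mu = 1$ is justified, since the paper's step $\int_{T_\epsilon}^t \tau^{-\mu}\,d\tau \lesssim t^{1-\mu}$ also produces $\log t$ there (take $V = \jBra{x}^{-1}$, the smooth version of your Coulomb example, which actually satisfies the symbol condition), so the bound should be read as $\log(2+t)$ when $\mu = 1$, matching the paper's earlier display and harmless in all later uses.
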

\begin{proof}
    We will prove the result for $\nabla_\xi \nu$: The corresponding result for higher derivatives is similar.  By direct calculation, we have that
    \begin{equation*}\begin{split}
        |\nabla_\xi \nu(\xi,t)| =& \left| \int_0^t 2\tau \nabla_x V(2\tau\xi, \tau)\;d\tau\right|\\
                                \lesssim& \int_0^t 2\tau \jBra{2\tau \xi}^{-1-\mu} \;d\tau
    \end{split}\end{equation*}
    Now, for $t > 100 \epsilon^{-1} = T_\epsilon$ and $|\xi| > \epsilon$, we see that the integrand is $O(\tau)$ over the interval $[0,T_\epsilon]$ and $O(\tau^{-\mu} |\xi|^{-1-\mu})$ for $\tau > T_\epsilon$.  Thus, 
    \begin{equation*}\begin{split}
        |\nabla_\xi \nu(\xi,t)| \lesssim& T_\epsilon^2 + |\xi|^{-1-\mu} t^{1-\mu}\\
                                \lesssim& \left(T_\epsilon^{\mu} + \epsilon^{-1-\mu}\right) t^{1-\mu}\\
                                \leq& C_\epsilon t^{1-\mu}
    \end{split}\end{equation*}
    as required.  Examining the argument, we see that higher derivatives can be handled similarly by redefining the constant $C_\epsilon$.
\end{proof}
Since $\mu > 1/2$, we have that $|\nabla_\xi \nu| \ll_\epsilon t^{1/2}$ for $t$ sufficiently large.  Thus, in the region where
\begin{equation*}
    |\nabla_\xi \Phi| \gtrsim t^{1/2}
\end{equation*}
we have that
\begin{equation*}
    \nabla_\xi \Phi = \nabla_\xi \Phi_0 + O_\epsilon(t^{1-\mu})
\end{equation*}
where $\Phi_0 = \Phi + \nu$ is the phase for the free Schr\"odinger equation:
\begin{equation}\label{eqn:Phi-0-def}
    \Phi_0(\xi;x,t) = x\cdot \xi - t|\xi|^2
\end{equation}
In particular, if we are far from the stationary points in the sense that
\begin{equation*}
    |\nabla_\xi \Phi| \gtrsim t^{1/2}
\end{equation*}
then the Dollard correction is lower order.  Thus, we will rewrite~\eqref{eqn:V-ext-expr-1} by separating out the lower order terms involving $\nabla \nu$ from the leading order expression containing $\nabla \Phi$, and then integrate by parts in the leading order terms using the identity
\begin{equation*}
    \nabla_\xi e^{i\Phi} = i \nabla_\xi \Phi e^{i\Phi}
\end{equation*}
to obtain:
\begin{equation*}\begin{split}
    \left(\Vext(x,t) - \Vext(2pt,t)\right) U_D(t,0) \phi =& -\frac{i}{(2\pi)^{3/2}} \int_{\bbR^3}e^{i\Phi} \nabla_\xi \cdot \left(\CVext(x,\xi,t) \hat{\phi}(\xi)\right)\;d\xi\\
    &+ \frac{1}{(2\pi)^{3/2}} \int_{\bbR^3}e^{i\Phi}\nabla_\xi \nu \cdot \CVext(x,\xi,t) \hat{\phi}(\xi)\;d\xi\\
    =:& \rmI + \rmII
\end{split}\end{equation*}
We then further divide these terms into their near stationary components (where $|\nabla \Phi_0(\xi)| \lesssim t^{1/2}$):
\begin{equation}\label{eqn:near-stat}\begin{split}
    \rmI_\text{stat}(x,t) = \frac{i}{(2\pi)^{3/2}} \int_{\bbR^3}e^{i\Phi} F(t^{1/2}|\xi - \xi_\text{stat}| \leq 1) \nabla_\xi \cdot \left(\CVext(x,\xi,t) \hat{\phi}(\xi)\right)\;d\xi\\
    \rmII_\text{stat}(x,t) = \frac{1}{(2\pi)^{3/2}} \int_{\bbR^3}e^{i\Phi}F(t^{1/2}|\xi - \xi_\text{stat}| \leq 1) \nabla_\xi \nu \cdot \CVext(x,\xi,t) \hat{\phi}(\xi)\;d\xi
\end{split}\end{equation}
and their nonstationary components:
\begin{equation}\label{eqn:non-stat}\begin{split}
    \rmI_\text{nonstat}(x,t) = \frac{i}{(2\pi)^{3/2}} \int_{\bbR^3}e^{i\Phi} F(t^{1/2}|\xi - \xi_\text{stat}| \geq 1) \nabla_\xi \cdot \left(\CVext(x,\xi,t) \hat{\phi}(\xi)\right)\;d\xi\\
    \rmII_\text{nonstat}(x,t) = \frac{1}{(2\pi)^{3/2}} \int_{\bbR^3}e^{i\Phi}F(t^{1/2}|\xi - \xi_\text{stat}| \geq 1) \nabla_\xi \nu \cdot \CVext(x,\xi,t) \hat{\phi}(\xi)\;d\xi
\end{split}\end{equation}
Here, $\xi_\text{stat}(x,t) = \frac{x}{2t}$ is the stationary point for the \textit{free} phase $\Phi_0$.  We will obtain decay in time for $\rmI_\text{stat}, \rmII_\text{stat}$ in $L^2_x$ in~\Cref{sec:near-stat-decay} and then prove time decay for the terms $\rmI_\text{nonstat}$ and $\rmII_\text{nonstat}$ in~\Cref{sec:non-stat-decay}.

\subsubsection{Near stationary points}\label{sec:near-stat-decay}

Near the stationary point $\xi_\text{stat}$, the strategy here is to exploit the fact that $2\xi t \approx x$ in order to prove decay for $\CVext$ as $x \to \infty$:
\begin{lemma}\label[lemma]{lem:CVext-decay-near-stat}
    Let $|\xi| > \epsilon$.  If $|\xi - \xi_\text{stat}| \lesssim t^{-1/2}$, then for $t$ sufficiently large (depending on $\epsilon$), the bounds
    \begin{equation}\label{eqn:CVext-decay-near-stationary}
        |\CVext(x,\xi,t)| \lesssim_\epsilon \jBra{x}^{-1-\mu}
    \end{equation}
    and
    \begin{equation}\label{eqn:CVext-deriv-decay-near-stationary}
        |D_\xi^a\CVext(x,\xi,t)| \lesssim_\epsilon \jBra{x}^{-1-\mu},\qquad 1 \leq a \leq 3
    \end{equation}
    hold uniformly in $\xi$ and $t$.
\end{lemma}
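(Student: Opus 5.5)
The plan is to bound $\CVext$ pointwise along the segment joining $x$ to $2\xi t$, and show that every point of that segment has size comparable to $|x|$.

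Recall
\begin{equation*}
\CVext(x,\xi,t) = \int_0^1 \nabla_x \Vext\left(\lambda x + 2(1-\lambda)\xi t, t\right)\,d\lambda .
\end{equation*}
Since $\xi_\text{stat} = x/(2t)$, the hypothesis $|\xi - \xi_\text{stat}| \lesssim t^{-1/2}$ means $|2\xi t - x| \lesssim t^{1/2}$. Together with $|\xi| > \epsilon$, this gives
\begin{equation*}
|x| \geq 2\epsilon t - Ct^{1/2} \geq \epsilon t
\end{equation*}
for $t$ large depending on $\epsilon$. Set $y_\lambda = \lambda x + 2(1-\lambda)\xi t$. Then $|y_\lambda - x| = (1-\lambda)|2\xi t - x| \lesssim t^{1/2} \leq \epsilon^{-1/2}|x|^{1/2}$, and for $t$ large this is at most $|x|/2$ uniformly in $\lambda \in [0,1]$. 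Hence $\jBra{y_\lambda} \sim \jBra{x}$ on the whole segment.

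Next I bound the symbol of $\Vext$ itself. By the product rule, $D_x^a \Vext$ consists of $D_x^a V$ times the cutoff, plus terms $D^j_x F(|x|/t^\beta \ge 1)\, D_x^{a-j}V$ with $j \ge 1$. The cutoff derivatives are $O(t^{-j\beta})$ and are supported where $|x| \sim t^\beta$, so there $t^{-j\beta} \lesssim \jBra{x}^{-j}$. Using~\eqref{eqn:V-cond}, this gives $|D_x^a \Vext(y,t)| \lesssim \jBra{y}^{-a-\mu}$ uniformly in $t \geq 1$. Applying this with $a = 1$ at $y = y_\lambda$ and integrating in $\lambda$ yields~\eqref{eqn:CVext-decay-near-stationary}.

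For the $\xi$-derivatives I differentiate under the integral: $D_\xi^a$ applied to $\nabla_x \Vext(y_\lambda,t)$ produces $(2(1-\lambda)t)^a D_x^{a+1}\Vext(y_\lambda,t)$. Its size is
\begin{equation*}
\lesssim t^a \jBra{x}^{-a-1-\mu} \lesssim \epsilon^{-a}\jBra{x}^{-1-\mu},
\end{equation*}
using $\jBra{y_\lambda} \sim \jBra{x}$ and $|x| \geq \epsilon t$. This is the key point: each factor of $t$ from the chain rule is absorbed by the extra decay $\jBra{x}^{-1}$ from one more $x$-derivative, which works only because near stationarity $|x| \gtrsim t$. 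For $a \le 3$ this needs at most four derivatives of $V$, well within the hypothesis $a \le 8$.

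The only delicate step is the uniform comparability $\jBra{y_\lambda} \sim \jBra{x}$ across the segment, which needs $t^{1/2} \ll |x|$. This is where the assumption $|\xi| > \epsilon$ and the choice of $t$ large depending on $\epsilon$ enter; everything else is routine.
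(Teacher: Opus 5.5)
Your proposal is correct and follows essentially the same route as the paper: you write the segment point as $x + (\lambda-1)(x-2\xi t)$, use $|x|\ge \epsilon t \gg t^{1/2}$ to get comparability with $|x|$ along the whole segment, and absorb each chain-rule factor of $t$ from $D_\xi$ using the extra $\jBra{x}^{-1}$ of decay that comes with one more $x$-derivative. You also check explicitly that the cutoff in $\Vext$ preserves the symbol bounds $|D_x^a \Vext(y,t)| \lesssim \jBra{y}^{-a-\mu}$; the paper glosses over this step by simply writing $\nabla_x V$, and your treatment of it is correct.
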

\begin{proof}
    We will first prove~\eqref{eqn:CVext-decay-near-stationary}, then explain how to accommodate the derivatives in~\eqref{eqn:CVext-deriv-decay-near-stationary}.  Observe that
    \begin{equation*}
        \lambda x + 2(1-\lambda)\xi t = (\lambda - 1) (x - 2\xi t) + x
    \end{equation*}
    Since $x - 2\xi t = 2t(\xi_\text{stat} - \xi)$, the first term is $O(t^{1/2})$.  On the other hand, since 
    $$x = 2t\xi_\text{stat} = 2t\xi + O(t^{1/2})$$
    for all sufficiently large times $t$ we have that
    \begin{equation*}
        |x| \geq \epsilon t \gg t^{1/2}
    \end{equation*}
    In particular, for $t$ large enough
    \begin{equation*}
        |\lambda x + 2(1-\lambda)\xi t| \sim |x|
    \end{equation*}
    and thus
    \begin{equation*}
        |\CVext(x,\xi,t)| \leq \int_0^1 |\nabla_x V(\lambda x + 2(1-\lambda)\xi t)|\;d\lambda \lesssim \jBra{x}^{-1-\mu}
    \end{equation*}
    which completes the proof of~\eqref{eqn:CVext-decay-near-stationary}.  Turning to~\eqref{eqn:CVext-deriv-decay-near-stationary}, we observe that

    \begin{equation*}
        D_\xi^a \CVext(x,\xi,t) = \int_0^1 (2(1-\lambda) t)^{a} D^a \nabla_x V(\lambda x + 2(1-\lambda)\xi t)\;d\lambda 
    \end{equation*}
    so
    \begin{equation*}
        |D_\xi^a \CVext(x,\xi,t)| \lesssim |t|^{a} \jBra{x}^{-1-a -\mu}  \lesssim_\epsilon \jBra{x}^{-1-\mu}
    \end{equation*}
    where the last inequality follows from the fact that $|x| \geq \epsilon t$.
\end{proof}
Combining the first inequality in~\Cref{lem:CVext-decay-near-stat} with~\Cref{lem:nu-deriv-slow-growth}, we obtain the pointwise bound
\begin{equation*}\begin{split}
    |\rmII_\text{stat}(x,t)| \lesssim& \lVert F(t^{1/2}|\xi - \xi_\text{stat}| \leq 1) \rVert_{L^1_\xi} \lVert \hat{\phi} \nabla_\xi \nu \rVert_{L^\infty_\xi} \lVert \CVext(x,\xi,t) \rVert_{L^\infty_\xi}\\
        \lesssim_\phi& t^{-3/2} t^{1-\mu} \jBra{x}^{-1-\mu}\\
        \lesssim_\phi& t^{-1/2-\mu} \jBra{x}^{-1-\mu}
\end{split}\end{equation*}
for $t$ sufficiently large depending on $\epsilon$ (and hence on $\phi$).  In particular, since $\mu > 1/2$, we see that
\begin{equation*}
    \lVert\rmII_\text{stat}(x,t) \rVert_{L^2} \lesssim_\phi t^{-1/2-\mu}
\end{equation*}
is integrable in time over $[1,\infty)$.  By also using the derivative bound in~\Cref{lem:CVext-decay-near-stat}, we find that
\begin{equation*}\begin{split}
   |\rmI_\text{stat}(x,t)| \lesssim& \lVert F(t^{1/2}|\xi - \xi_\text{stat}| \leq 1) \rVert_{L^1_\xi} \lVert \nabla_\xi \cdot(\hat{\phi} \CVext(x,\xi,t)) \rVert_{L^\infty_\xi}\\
   \lesssim_\phi& t^{-3/2} \jBra{x}^{-1-\mu}
\end{split}\end{equation*}
so
\begin{equation*}
    \lVert \rmI_\text{stat}(x,t) \rVert_{L^2_x(\bbR^3)} \lesssim t^{-3/2}
\end{equation*}
is also integrable in time.

\subsubsection{Far from stationary points}\label{sec:non-stat-decay}
To handle the contribution coming from a distance greater than $t^{-1/2}$ from the stationary point, we will use the nonstationary phase principle.  Let us define the functions $\chi_j$ by
\begin{equation*}
    \chi_j(z) = F(2^{-j-1} |z| \leq 1) - F(2^{-j} |z| \leq 1)
\end{equation*}
In particular, $\chi_j(z)$ vanishes unless $|z| \in [2^j, 2^{j+2}]$.  Observe that
\begin{equation*}
    F(|z| \geq 1) = \sum_{j = 0} ^\infty \chi_j(z)
\end{equation*}
and
\begin{equation}\label{eqn:chi-j-supp}
    \supp \chi_j \subset [2^j, 2^{j+2}]
\end{equation}
so we can rewrite $\rmInst$ and $\rmIInst$ as
\begin{equation*}\begin{split}
    \rmInst =& \sum_{j=0}^\infty \frac{i}{(2\pi)^{3/2}} \int_{\bbR^3}e^{i\Phi} \chi_j(t^{1/2}|\xi - \xi_\text{stat}|) \nabla_\xi \cdot \left(\CVext(x,\xi,t) \hat{\phi}(\xi)\right)\;d\xi\\
            =:& \sum_{j=0}^\infty \rmInst^j\\
    \rmIInst =& \sum_{j=0}^\infty \frac{1}{(2\pi)^{3/2}} \int_{\bbR^3}e^{i\Phi}\chi_j(t^{1/2}|\xi - \xi_\text{stat}|) \nabla_\xi \nu \cdot \CVext(x,\xi,t) \hat{\phi}(\xi)\;d\xi\\
            =:& \sum_{j=0}^\infty \rmIInst^j
\end{split}\end{equation*}
Thus, it suffices to obtain bounds for $\rmInst^j$ and $\rmIInst^j$ that are summable in $j$.  When doing this, it is helpful to distinguish the cases $j < J$ and $j \geq J$, where $J$ is defined by
\begin{equation}\label{eqn:J-def}
    \frac{\epsilon t^{1/2}}{2^{10}} \leq 2^J < \frac{\epsilon t^{1/2}}{2^9}
\end{equation}
The parameter $J$ represents the threshold above which we no longer get useful cancellations for $\CVext$:
\begin{lemma}\label[lemma]{lem:CVext-nonstat-lemma}
    Suppose $\xi \in \supp \chi_j(t^{1/2} |\bullet - \xi_\text{stat}|) \cap \{|\xi| > \epsilon\}$.  If $j < J$ or $j \geq J$ and $|x| > 2^{j+10} t^{1/2}$, then for $t$ sufficiently large we have the bound
    \begin{equation}\label{eqn:CVext-decay-non-stat-below}
        |D_\xi^a\CVext(x,\xi,t)| \lesssim \jBra{x}^{-1-\mu},\qquad 1 \leq a \leq 3
    \end{equation}
    Otherwise, for all sufficiently large $t$, we have that
    \begin{equation}\label{eqn:CVext-decay-non-stat-above}
        |D_\xi^a\CVext(x,\xi,t)| \lesssim t^{-(1+\mu)\beta},\qquad  1\leq a \leq 3
    \end{equation}
\end{lemma}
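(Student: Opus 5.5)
The plan is to argue exactly as in \Cref{lem:CVext-decay-near-stat}. We differentiate under the integral,
\begin{equation*}
    D_\xi^a \CVext(x,\xi,t) = \int_0^1 (2(1-\lambda)t)^a\, D_x^a\nabla_x \Vext\bigl(\lambda x + 2(1-\lambda)\xi t, t\bigr)\;d\lambda ,
\end{equation*}
and reduce everything to a lower bound on the size of the argument $y_\lambda := \lambda x + 2(1-\lambda)\xi t = x + (\lambda-1)(x-2\xi t)$. The symbol bound~\eqref{eqn:V-cond} then controls the integrand. The cutoff $F(|x|/t^\beta\geq 1)$ has derivatives bounded by $t^{-k\beta}\lesssim \jBra{y}^{-k}$ on its support. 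Hence $|D_x^{a}\nabla_x\Vext(y,t)|\lesssim \jBra{y}^{-1-a-\mu}\mathbbm{1}_{|y|\gtrsim t^\beta}$, and we obtain
\begin{equation*}
    |D_\xi^a\CVext| \lesssim t^a \sup_{\lambda\in[0,1]} \jBra{y_\lambda}^{-1-a-\mu}\mathbbm{1}_{|y_\lambda|\gtrsim t^\beta}.
\end{equation*}

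\emph{Case 1: $j<J$.} On the support of $\chi_j(t^{1/2}|\xi-\xi_\text{stat}|)$ we have $|x-2\xi t| = 2t|\xi-\xi_\text{stat}|\le 2^{j+3}t^{1/2} < 2^{J+3}t^{1/2} \le \epsilon t/2^6$ by~\eqref{eqn:J-def}. Since $|2\xi t|\ge 2\epsilon t$, this gives $|x|\ge \epsilon t$, and so $|y_\lambda|\ge |x| - |x-2\xi t| \gtrsim |x|$. Together with $|x|\gtrsim_\epsilon t$, this yields $t^a\jBra{x}^{-1-a-\mu}\lesssim_\epsilon \jBra{x}^{-1-\mu}$, which is~\eqref{eqn:CVext-decay-non-stat-below}.

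\emph{Case 2: $j\ge J$ and $|x|>2^{j+10}t^{1/2}$.} Here $|x-2\xi t|\le 2^{j+3}t^{1/2}\le 2^{-7}|x|$, so again $|y_\lambda|\ge |x|/2$. It remains to compare $t^a$ with $|x|^a$. We have $|2\xi t|\le |x|+|x-2\xi t|\lesssim |x|$, and $|2\xi t|\ge 2\epsilon t$, so $|x|\gtrsim \epsilon t$. The same computation as in Case 1 then gives~\eqref{eqn:CVext-decay-non-stat-below}.

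\emph{Case 3: $j\ge J$ and $|x|\le 2^{j+10}t^{1/2}$.} Here there is no useful lower bound relating $|y_\lambda|$ to $|x|$; the segment from $2\xi t$ to $x$ may pass near the origin. The only information available is that $\Vext$ vanishes unless $|y_\lambda|\gtrsim t^\beta$, so the integrand is bounded by $t^a\jBra{y_\lambda}^{-1-a-\mu}\le t^{a} t^{-(1+a+\mu)\beta}$. This falls short of~\eqref{eqn:CVext-decay-non-stat-above} by a factor $t^{a(1-\beta)}$, and closing this gap is the step I expect to be the main obstacle.

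To recover that factor, I would exploit the $\lambda$-integration rather than a pointwise bound. Write $w=2\xi t - x$, so that $y_\lambda = x+(1-\lambda)w$ and $\partial_\lambda y_\lambda = -w$. Here $|w|\ge 2^{j}t^{1/2}\gtrsim \epsilon t$, since $j\ge J$. We then integrate by parts in $\lambda$, using $\partial_\lambda = -w\cdot\nabla_y$. One $\lambda$-integration trades a factor $(2(1-\lambda)t)\, w\cdot D^{a}\nabla V$ for an expression involving $D^{a-1}\nabla V$ times $t/|w| \lesssim_\epsilon 1$. Concretely, I would decompose $D^a_y$ into derivatives along $\hat w$ and transverse ones. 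Along $\hat w$, the bound $\int |\partial_{\hat w}^k g(x+sw)|\,ds \lesssim |w|^{-1}\sup|\partial^{k-1}g|$ reduces each power of $t$ to $O_\epsilon(1)$. Boundary terms at $\lambda=0,1$ are evaluated at $y=2\xi t$, where $|y|\sim t$, and at $\lambda = 1$, where the factor $(1-\lambda)$ vanishes; both are harmless.

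If the transverse derivatives resist this argument, I would fall back on the observation that the later application only needs the $L^2_x$ bound on the region $|x|\lesssim 2^{j}t^{1/2}$. I would then aim for the weaker bound $t^{a}t^{-(1+a+\mu)\beta}$ for the transverse parts and check whether it suffices there. I expect, however, that the directional integration by parts gives the stated $t^{-(1+\mu)\beta}$.
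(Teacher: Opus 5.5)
Your Cases 1 and 2 are correct and coincide with the paper's argument. The paper reduces to $|x|>2|x-2\xi t|$ and reruns \Cref{lem:CVext-decay-near-stat}; you usefully make explicit that $|x|\gtrsim\epsilon t$ also holds in Case 2, which is what absorbs the factor $t^a$.

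\textbf{The gap is Case 3, and it cannot be closed}, because \eqref{eqn:CVext-decay-non-stat-above} is false for $a=2,3$. The paper's proof of this case is exactly the pointwise bound you rejected. It estimates $t^a|D_x^{a}\nabla_x\Vext|\lesssim t^{a}t^{-(a+1+\mu)\beta}$ and then records $t^{-(1+\mu)\beta}$, dropping the factor $t^{a(1-\beta)}$ you identified. So it supplies no missing idea.

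Your integration by parts in $\lambda$ only removes derivatives along $\hat w$, since $\partial_\lambda$ sees only $w\cdot\nabla_y$. For transverse components, the best available estimate is to integrate the pointwise bound $|D_y^{a+1}\Vext(y)|\lesssim \max(|y|,t^\beta)^{-(a+1+\mu)}$ along the segment, using $|y_\lambda|\ge |w||\lambda-\lambda_0|$ as in \Cref{lem:CVext-3-exterior-bound-refined}. This gives
\begin{equation*}
|D_\xi^a\CVext(x,\xi,t)|\lesssim t^a|w|^{-1}t^{-(a+\mu)\beta}\lesssim_\epsilon t^{(a-1)(1-\beta)}\,t^{-(1+\mu)\beta}.
\end{equation*}
This proves the claim for $a=1$, though not through the pointwise bound. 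For $a=2,3$ it is all that is true.

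To see sharpness, take $V(y)=\jBra{y}^{-\mu}$, $\xi=e_1$, and $x=-2te_1+2de_2$ with $d=3t^\beta$.
\begin{itemize}
\item Then $t^{1/2}|\xi-\xi_\text{stat}|\approx 2t^{1/2}$, so the relevant $j$ has $j>J$ and $|x|\approx 2t<2^{j+10}t^{1/2}$. This is your Case 3.
\item The segment from $2\xi t$ to $x$ stays in $\{|y|>2t^\beta\}$, where $\Vext=V$.
\item It comes closest to the origin, at distance $\approx d$, near $\lambda_0=1/2$, where $(1-\lambda)^2\approx 1/4$.
\end{itemize}
Let $n$ be the unit normal to the segment in the $e_1e_2$-plane, and note $|w|\approx 4t$. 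Up to relative errors $O(t^{\beta-1})$, the $n^{\otimes 3}$ component of $D_\xi^2\CVext$ is
\begin{equation*}
4t^2\int_0^1(1-\lambda)^2\,\partial_n^3V(y_\lambda)\,d\lambda\approx t^2|w|^{-1}h(d),\qquad h(r):=\int_{\bbR}\partial_r^3(s^2+r^2)^{-\mu/2}\,ds=-\mu^2(1+\mu)c_\mu\, r^{-2-\mu},
\end{equation*}
with $c_\mu=\int_{\bbR}(1+u^2)^{-1-\mu/2}\,du$. This has size $t^{1-\beta}\cdot t^{-(1+\mu)\beta}$. The analogous $n^{\otimes 4}$ component of $D_\xi^3\CVext$ has size $t^{2(1-\beta)}\,t^{-(1+\mu)\beta}$.

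So your fallback is the right move. State Case 3 as $|D_\xi^a\CVext|\lesssim 2^{-j}t^{a-1/2-(a+\mu)\beta}$, using $|w|\sim 2^jt^{1/2}$. Then redo the interior-region estimate for $\rmInst^j$ with this bound, since the paper's version of that estimate relies on the false one.
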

\begin{proof}
    To prove the first part of the result involving~\eqref{eqn:CVext-decay-non-stat-below}, it suffices to show that the condition
    \begin{equation} \label{eqn:x-vs-drift-cond}
        |x| > 2|x - 2\xi t|
    \end{equation}
    holds whenever $j < J$ or $j \geq J$, $|x| > \epsilon 2^{j+10} t^{1/2}$, since in this case we can repeat the argument from the proof of~\Cref{lem:CVext-decay-near-stat}. The condition $\xi \in \supp \chi_j(t^{1/2}|\bullet - \xi_\text{stat}|)$ implies that
    \begin{equation*}
        |x - 2\xi t| = 2t|\xi - \xi_\text{stat}| \leq 2^{j+3} t^{1/2}
    \end{equation*}
    so if $|x| > 2^{j+10} t^{1/2}$, we immediately see that~\eqref{eqn:x-vs-drift-cond} holds.  On the other hand, if $j < J$, then since $|\xi| > \epsilon$, we have that
    \begin{equation*}
        |x - 2\xi t| \leq 2^{j+3} t^{1/2} < 2^{-6}\epsilon t
    \end{equation*}
    so,
    \begin{equation*}
        |x| \geq 2t|\xi| - |x - 2\xi t| > \epsilon t > 2|x - 2\xi t|
    \end{equation*}
    In either case, we can run the argument from~\Cref{lem:CVext-decay-near-stat} to conclude that~\eqref{eqn:x-vs-drift-cond} is satisfied.  

    We now turn to~\eqref{eqn:CVext-decay-non-stat-above}.  Here, the bound is in fact unconditional: Based on the decay of $V$ and the definition of $\Vext$, we have that
    \begin{equation*}
        |D_x^a \Vext(x,t)| \lesssim t^{-(a + \mu)\beta }
    \end{equation*}
    so
    \begin{equation*}
        |D_\xi^a \CVext(x,\xi,t)| \lesssim_a \int_0^1 t^{a} |D_x^a \nabla_x \Vext(\lambda x + 2(1-\lambda)\xi t,t)|\;d\lambda\\
        \lesssim_a t^{-(1+\mu)\beta }\qedhere
    \end{equation*}
\end{proof}
With this bound in hand, we are ready to derive bounds for the $\rmInst^j$ and $\rmIInst^j$.  Define the differential operator
\begin{equation*}
    \mathcal{L} := -i \frac{\nabla_\xi \Phi}{|\nabla_\xi \Phi|^2} \cdot \nabla_\xi
\end{equation*}
and observe that
\begin{equation}\label{eqn:L-IBP-ident}
    \mathcal{L} e^{i\Phi} = e^{i\Phi}
\end{equation}
The adjoint is given by
\begin{equation}\label{eqn:L-adjoint}\begin{split}
    \mathcal{L}^* g =& -i \nabla_\xi \cdot  \left(\frac{\nabla_\xi \Phi}{|\nabla_\xi \Phi|^2} g\right)\\
    =& -i (\nabla_\xi \cdot F) g -i F \cdot \nabla_\xi g
\end{split}\end{equation}
where $F = \frac{\nabla_\xi \Phi}{|\nabla_\xi \Phi|^2}$.  Applying~\eqref{eqn:L-IBP-ident} twice and integrating by parts, we find that
\begin{equation*}
    \rmInst^j = \frac{i}{(2\pi)^{3/2}} \int_{\bbR^3}e^{i\Phi} (\mathcal{L}^*)^2\left(\chi_j\left(t^{1/2}|\xi - \xi_\text{stat}|\right) \nabla_\xi \cdot \left(\CVext(x,\xi,t) \hat{\phi}(\xi)\right)\right)\;d\xi
\end{equation*}
Now, on the support of $\chi_j\left(t^{1/2}|\bullet - \xi_\text{stat}|\right)$, Lemma~\ref{lem:nu-deriv-slow-growth} implies that for all sufficiently large $t$,
\begin{equation*}\begin{split}
    |\nabla_\xi \Phi| =& |\nabla_\xi \Phi_0 - \nabla_\xi \nu| \sim 2^j t^{1/2}\\
    |D^2_\xi \Phi| =& |D^2_\xi \Phi_0 - D^2_\xi \nu| \sim t\\
    |D^3_\xi \Phi| =& |D^3_\xi \nu| = O(t^{1-\mu})
\end{split}\end{equation*}
Combining these estimates with~\eqref{eqn:L-adjoint}, a straightforward calculation shows that
\begin{equation*}\begin{split}
    (\mathcal{L}^*)^2 g =& -(\nabla_\xi \cdot F)^2 g - (\nabla_\xi \cdot F) F\cdot \nabla_\xi g - F\cdot \nabla_\xi (\nabla_\xi \cdot F g)\\
                &- F \cdot \nabla_\xi F (F \cdot \nabla_\xi g)\\
                =& O(2^{-4j} g) + O(t^{-1/2-\mu} 2^{-3j} g) + O(t^{-1/2} 2^{-3j} |\nabla_\xi g|) + O(t^{-1} 2^{-2j} | D^2_\xi g|)
\end{split}\end{equation*}
on the support of $\chi_j(t^{1/2}(|\bullet - \xi_\text{stat}|)$.  Here, we are interested in the case 
$$g = \chi_j(t^{1/2} |\xi - \xi_\text{stat}|) \nabla_\xi \cdot (\CVext \hat{\phi})$$
To estimate this term, we note that we have the bounds
\begin{equation}\label{eqn:nabla-CVext-hphi-bd}
    \lVert D_\xi^a (\CVext \hat{\phi}) \rVert_{L^1_\xi \cap L^\infty_\xi} \lesssim_\phi \begin{cases}
        \jBra{x}^{-1-\mu} & j < J \text{ or } j \geq J \text{ and } |x| > 2^{j+10} t^{1/2}\\
        t^{-(1+\mu)\beta} & \text{else}
    \end{cases}
\end{equation}
and
\begin{equation}\label{eqn:cut-off-deriv-Lp}
    \lVert D_\xi^a \chi_j(t^{1/2}(|\bullet - \xi_\text{stat}|) \rVert_{L^p_\xi} \lesssim \left(t^{1/2} 2^{-j}\right)^{a} \left( t^{-1/2} 2^j\right)^{3/p}
\end{equation}
Taking $p = 1$ in~\eqref{eqn:cut-off-deriv-Lp} gives
\begin{equation*}
    \lVert 2^{-4j} g \rVert_{L^1_\xi} \lesssim_\phi t^{-3/2} 2^{-j}
\end{equation*}
Combining this with the estimates for $p = 3/2$ and $p = 3$ also gives
\begin{equation*}\begin{split}
    \lVert t^{-1/2} 2^{-3j} \nabla_\xi g\rVert_{L^1_\xi} + \lVert t^{-1} 2^{-2j} D^2_\xi g \rVert_{L^1_\xi} \lesssim_\phi& t^{-3/2} 2^{-j}\\
    \lVert t^{-1/2-\mu} 2^{-3j} g \rVert_{L^1_\xi} \lesssim& t^{-3/2-\mu} 2^{-j} \ll t^{-3/2} 2^{-j}
\end{split}\end{equation*}
Combining these bounds with~\eqref{eqn:nabla-CVext-hphi-bd}, we find that
\begin{equation}\label{eqn:rmInst-j-bd-1}\begin{split}
    |\rmInst^j(x,t)| \lesssim& \lVert (\mathcal{L}^*)^2 g \rVert_{L^1_\xi}\\
    \lesssim& t^{-3/2} 2^{-j} \begin{cases}
        \jBra{x}^{-1-\mu} & j < J \text{ or } j \geq J \text{ and } |x| > 2^{j+10} t^{1/2}\\
        t^{-1-\mu} & \text{else}
    \end{cases}
\end{split}\end{equation}
In particular, for $j < J$, we have that
\begin{equation*}
    \lVert \rmInst^j(x,t) \rVert_{L^2_x(\bbR^3)} \lesssim t^{-3/2} 2^{-j}
\end{equation*}
which is summable in $j$.  If $j \geq J$, then in the region $|x| > 2^{j+10} t^{1/2}$, the same reasoning as above shows that
\begin{equation*}
   \lVert \rmInst^j(x,t) \rVert_{L^2_x(|x| \geq 2^{j+10} t^{1/2})} \lesssim t^{-3/2} 2^{-j}
\end{equation*}
To handle the interior region $|x| < 2^{j+10} t^{1/2}$, we must modify the calculation above slightly.  Taking $p = 6/5+$ in~\eqref{eqn:cut-off-deriv-Lp} gives
\begin{equation*}
    \lVert 2^{-4j} g \rVert_{L^1_\xi} \lesssim_\phi t^{-5/4 +} 2^{(-3/2-) j }
\end{equation*}
and further taking $p = 2+, p = 6+$ yields
\begin{equation*}\begin{split}
    \lVert t^{-1/2} 2^{-3j} \nabla_\xi g\rVert_{L^1_\xi} + \lVert t^{-1} 2^{-2j} D^2_\xi g \rVert_{L^1_\xi} \lesssim_\phi& t^{-5/4+} 2^{(-3/2-)j}\\
    \lVert t^{-1/2-\mu} 2^{-3j} g \rVert_{L^1_\xi} \lesssim& t^{-5/4-\mu+} 2^{(-3/2-)j} \ll t^{-5/4+} 2^{(-3/2-)j}
\end{split}\end{equation*}
Applying~\eqref{eqn:nabla-CVext-hphi-bd} and arguing as above, we see that
\begin{equation}\label{eqn:interior-region-bound}\begin{split}
    \lVert \rmInst^j(x,t) \rVert_{L^2_x\left(|x| \leq 2^{j+10} t^{1/2}\right)} \lesssim& t^{3/4} 2^{3/2j} \lVert \rmInst^j(x,t) \rVert_{L^\infty_x\left(|x| \leq 2^{j+10} t^{1/2}\right)}\\
    \lesssim_\delta& t^{-\frac{1}{2} +\delta} t^{-(1+\mu)\beta} 2^{-\delta j}
\end{split}\end{equation}
for some $\delta > 0$.  Since $\beta > \frac{3}{2(1+\mu)}$, we see that
\begin{equation*}
    -\frac{1}{2} - (1+\mu)\beta < -1
\end{equation*}
Choosing $\delta$ sufficiently small depending on $\beta$ and summing over $j \geq 0$, we find that
\begin{equation*}
    \lVert \rmInst(x,t) \rVert_{L^2_x\left(|x| \leq 2^{j+10} t^{1/2}\right)} \in L^1_t(1,\infty)
\end{equation*}

Turning to the bounds for $\rmIInst^j$, we note that the integrand for $\rmIInst^j$ may be obtained from the integrand for $\rmInst^j$ by replacing the $\nabla_\xi (\CVext \hat{\phi})$ with $\nabla_\xi \nu \cdot (\CVext \hat{\phi})$.  In particular, by repeating the argument we used to bound $\rmInst^j$ but replacing~\eqref{eqn:nabla-CVext-hphi-bd} with the bound
\begin{equation}\label{eqn:nabla-nu-CVext-hphi-bd}
    |D_\xi^a(\nabla \nu \cdot \CVext \hat{\phi})| \lesssim_\alpha t^{1-\mu} \begin{cases}
        \jBra{x}^{-1-\mu} & j < J \text{ or } j \geq J \text{ and } |x| > 2^{j+10} t^{1/2}\\
        t^{-(1+\mu)\beta} & \text{else}
    \end{cases}
\end{equation}
(which follows from~\Cref{lem:nu-deriv-slow-growth,lem:CVext-nonstat-lemma}), we see that
\begin{equation}\label{eqn:rmIInst-bd}
    \lVert \rmIInst(x,t) \rVert_{L^2_x\left(|x| \leq 2^{j+10} t^{1/2}\right)} \lesssim t^{1/2 - \mu - (1+\mu)\beta +}
\end{equation}
which is also acceptable.  
\begin{remark}
    According to the bound above, $\rmIInst$ may decay more slowly than $\rmInst$, contradicting the heuristic that the Dollard phase terms are lower order.  The reason for the apparent discrepancy is that we have only integrated by parts twice in $\rmIInst$, while in $\rmInst$ we integrated three times.  By performing a third integration by parts in the expression for $\rmIInst^j$ and estimating the resulting terms carefully, it is possible to show that this term decays faster than $t^{-3/2}$, agreeing with the heuristic that it is subleading.  In the interest of exposition, we have elected to give only the weaker bound~\eqref{eqn:rmIInst-bd}.
\end{remark}

\subsection{Completing the argument}
Combining the results of the previous sections, we now conclude that
\begin{equation*}
    \lVert (V(x,t) - V(2pt,t)) U_D(t,0) \phi \rVert_{L^1(0,\infty; L^2(\bbR^3))} < \infty 
\end{equation*}
so Cook's method now gives the existence of the wave operator in~\Cref{cor:full-WO}.

\subsection{A decomposition using the wave operators}

Based on~\Cref{cor:full-WO}, we see that any solution $u(x,t)$ to~\eqref{eqn:main} decomposes as
\begin{equation*}
    u(x,t) = U_D(t,0) u_+(x) + \urem(x,t)
\end{equation*}
with $u_+ = \Omega^*_D u_0$.  In particular, this gives the decomposition~\eqref{eqn:main-decomp}.  Moreover, as required by~\eqref{eqn:remainder-weak-lim}, $\urem$ is asymptotically weakly orthogonal to the Dollard flow, since by the definition of $\Omega_D^*$
\begin{equation}\label{eqn:remainder-weak-limit-2}
    \wlim_{t \to \infty} U_D(0,t) \urem(x,t) = \wlim_{t \to \infty} U_D(0,t) u(x,t) - u_+ = 0
\end{equation}
If we also assume that the finite energy condition~\eqref{eqn:energy-bound} is satisfied, the second part of~\Cref{prop:weak-WO} guarantees that $u_+ \in H^1(\bbR^3)$, and since $U_D$ is an $H^1$ isometry, ${\urem(x,t) = u(x,t) - U_D(t,0) u_+}$ is in $L^\infty_t H^1_x$.  In particular, the decomposition ${u(x,t) = U_D(t,0) u_+(x) + \urem(x,t)}$ satisfies all the conditions required in~\Cref{thm:main-thm}.  

In order to prove the sublinear spreading estimate~\eqref{eqn:urem-slow-spreading} and the more accurate estimate in~\Cref{thm:better-rate}, it will be helpful to employ a slightly different decomposition of $u$ where each piece solves~\eqref{eqn:main}:
\begin{proposition}\label[proposition]{prop:u-redecomp}
    Suppose that $u(x,t) = U(t,0) u_0(x)$ is a solution to~\eqref{eqn:main} satisfying the finite energy condition~\eqref{eqn:energy-bound}.  Then, we can find two functions $\uscat, \uwl \in L^\infty_t H^1_x$ solving~\eqref{eqn:main} such that:
    \begin{enumerate}
        \item We have the decomposition
        \begin{equation}\label{eqn:u-redecomp}
            u(x,t) = \uscat(x,t) + \uwl(x,t)
        \end{equation}
        \item $\uscat$ is \emph{scattering} in the sense that
        \begin{equation}\label{eqn:uscat-scats}
            \slim_{t \to \infty} U_D(0,t) \uscat(x,t)
        \end{equation}
        exists in $H^1$.
        \item $\uwl$ is \emph{weakly bound} in the sense that
        \begin{equation}\label{eqn:uwl-weak-bound}
            \wlim_{t \to \infty} U_D(0,t) \uwl(t) = 0
        \end{equation}
    \end{enumerate}
\end{proposition}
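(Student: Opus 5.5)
The decomposition will be built directly from the wave operator $\Omega_D$ of \Cref{cor:full-WO}. Set $u_+ = \Omega_D^* u_0$, which lies in $H^1$ by \Cref{prop:weak-WO} because of the energy bound~\eqref{eqn:energy-bound}. Then define
\begin{equation*}
    \uscat(t) = U(t,0)\,\Omega_D u_+, \qquad \uwl(t) = u(t) - \uscat(t) = U(t,0)\bigl(u_0 - \Omega_D\Omega_D^* u_0\bigr).
\end{equation*}
Both are solutions of~\eqref{eqn:main}, and~\eqref{eqn:u-redecomp} holds by construction. The remaining tasks are the three properties: $H^1$ bounds, strong convergence of $U_D(0,t)\uscat(t)$, and the weak-bound property of $\uwl$.

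For the scattering part, $U_D(0,t)\uscat(t) = U_D(0,t)U(t,0)\Omega_D u_+$. Since $\Omega_D = \slim U(0,t)U_D(t,0)$ and $U$, $U_D$ are unitary, we have
\begin{equation*}
    \lVert U_D(0,t)U(t,0)\Omega_D u_+ - u_+\rVert_{L^2} = \lVert \Omega_D u_+ - U(0,t)U_D(t,0)u_+\rVert_{L^2} \to 0.
\end{equation*}
So the strong limit exists in $L^2$ and equals $u_+$. In particular $\Omega_D$ is an isometry and $\uscat(t) - U_D(t,0)u_+\to 0$ in $L^2$.

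For the weakly bound part, $U_D(0,t)\uwl(t) = U_D(0,t)u(t) - U_D(0,t)\uscat(t)$. The first term converges weakly to $\Omega_D^*u_0 = u_+$, and the second converges strongly to $u_+$, so the difference converges weakly to $0$, which is~\eqref{eqn:uwl-weak-bound}.

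The main obstacle is the $H^1$ statements: $\uscat,\uwl\in L^\infty_tH^1_x$ and convergence of~\eqref{eqn:uscat-scats} in $H^1$ rather than $L^2$. Here \Cref{lem:L2-H1-bd} gives only exponentially growing bounds, so it cannot be used. I would instead show that the $L^2$ limit in \Cref{cor:full-WO} is also an $H^1$ limit on $H^1$ data, with uniform bounds, by running Cook's method in $H^1$. This means differentiating
\begin{equation*}
    \nabla\bigl(U(0,t)U_D(t,0)\phi\bigr)
\end{equation*}
or equivalently estimating $\lVert (V(x,t)-V(2pt,t))U_D(t,0)\phi\rVert_{H^1}$ for Schwartz $\phi$ with $\hat\phi$ supported away from $0$. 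The gradient falls either on $V$, giving $\nabla V$ with even better decay, or on $U_D(t,0)\phi$, which is $U_D(t,0)p\phi$ since $U_D$ is a Fourier multiplier. Both are handled by the same stationary and nonstationary phase bounds already proved, so the $H^1$ norm is integrable in $t$.

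A subtlety is that $\uscat$ evolves under $U(t,0)$, which is not uniformly bounded on $H^1$. I would avoid this by writing $\uscat(t) = U_D(t,0)u_+ + o_{L^2}(1)$ and bounding the error in $H^1$ by density. Take $\phi_n\to u_+$ in $H^1$ with $\phi_n$ of the above type. Then $U(t,0)\Omega_D\phi_n - U_D(t,0)\phi_n\to 0$ in $H^1$ as $t\to\infty$ by the $H^1$ Cook estimate, and it is bounded in $H^1$ uniformly in $t$ by the integral bound plus $\lVert\phi_n\rVert_{H^1}$, using that $U_D$ is an $H^1$ isometry. It then remains to pass $n\to\infty$. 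If uniformity in $n$ fails, I would fall back on interpolating the $L^2$ convergence against the energy bound~\eqref{eqn:energy-bound} for $u$. For $\uwl = u - \uscat$, the $L^\infty_tH^1_x$ bound follows from~\eqref{eqn:energy-bound} together with the bound for $\uscat$.
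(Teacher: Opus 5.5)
Your $L^2$ argument is correct and is the paper's proof. The paper also takes $\uscat(t)=U(t,0)\Omega_D\Omega_D^*u_0$, gets $\lVert\uscat(t)-U_D(t,0)u_+\rVert_{L^2}\to0$ from the definition of $\Omega_D$, and deduces the weak limit of $U_D(0,t)\uwl(t)$. It phrases that last step through $\urem$, which is equivalent to your use of \Cref{prop:weak-WO}. The paper's proof stops there. The $H^1$ assertions ($\uscat,\uwl\in L^\infty_tH^1_x$ and $H^1$ convergence in~\eqref{eqn:uscat-scats}) therefore rest entirely on your last two paragraphs, and those have a genuine gap.

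Your reduction ``differentiate $\nabla(U(0,t)U_D(t,0)\phi)$, or equivalently estimate $\lVert E(t)\rVert_{H^1}$'', with $E(t)=(V(x,t)-V(2pt,t))U_D(t,0)\phi$, is not an equivalence. The reason is the identity
\begin{equation*}
    U(t,0)\Omega_D\phi-U_D(t,0)\phi = i\int_t^\infty U(t,s)E(s)\,ds ,
\end{equation*}
in which $U(t,s)$ is not uniformly bounded on $H^1$ over the infinite interval $s\in[t,\infty)$. Your $\phi_n$ workaround uses exactly this representation, so it does not avoid the subtlety you identified.

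The best available bound, from $\nabla V\in L^\infty$, is $\lVert U(t,s)g\rVert_{H^1}\le\lVert g\rVert_{H^1}+C|t-s|\,\lVert g\rVert_{L^2}$. \Cref{lem:L2-H1-bd} is weaker still, and with no control of $\partial_tV$ there is no conserved energy to exploit. Closing your estimate would therefore require $\int^\infty s\lVert E(s)\rVert_{L^2}\,ds<\infty$. The decay proved in \Cref{sec:Dollard-WO} falls short of this: it gives rates such as $t^{-3/2}$ and $t^{-1/2-\mu}$ with $\mu\le1$.

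Even for good $\phi$, the passage $n\to\infty$ would need a bound $\lVert U(t,0)\Omega_D\phi\rVert_{L^\infty_tH^1_x}\lesssim\lVert\phi\rVert_{H^1}$. The constants in \Cref{sec:Dollard-WO} depend on $\phi$ through much more than its $H^1$ norm, so no such bound is available.

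Your fallback is circular. Interpolation cannot turn $L^2$ convergence of $\uscat(t)-U_D(t,0)u_+$ into an $H^1$ bound without an a priori $H^1$ (or stronger) bound on that same difference. Moreover,~\eqref{eqn:energy-bound} controls $u$, not $\uscat$. Given~\eqref{eqn:energy-bound}, the statement $\uscat\in L^\infty_tH^1_x$ is equivalent to $\uwl\in L^\infty_tH^1_x$, which is what you are trying to prove.

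What you have established is the $L^2$ version of the proposition, which is also all the paper's proof establishes. The $H^1$ claims need an additional idea.
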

\begin{proof}
    Define $\uscat(x,t) = U(t,0) \Omega_D \Omega^*_D u_0(x)$, $\uwl(x,t) = u(x,t) - \uscat(x,t)$.  Then, the decomposition~\eqref{eqn:u-redecomp} is trivially satisfied, and
    \begin{equation*}
        \slim_{t \to \infty} U(0,t) \uscat(x,t) = \Omega_D\Omega^*_D u_0
    \end{equation*}
    exists, so~\eqref{eqn:uscat-scats} holds.  Thus, it only remains to establish that $\uwl$ is weakly bound in the sense of~\eqref{eqn:uwl-weak-bound}.  To see why this is the case, we recall that $u_+ = \Omega_D^* u_0$, so
    \begin{equation}\label{eqn:asymp-equiv-uscat-mod-free-state}\begin{split}
        \lim_{t \to \infty} \lVert \uscat(x,t) - U_D(t,0) u_+ \rVert_{L^2} =& \lim_{t \to \infty} \lVert U(0,t) \uscat(x,t) - U(0,t) U_D(t,0) u_+ \rVert_{L^2}\\
        =& \lim_{t \to \infty} \lVert \Omega_D u_+ - U(0,t) U_D(t,0) u_+ \rVert_{L^2}\\
        =& 0
    \end{split}\end{equation}
    by the definition of $\Omega_D$.  Thus, for any $\phi \in L^2$, 
    \begin{equation*}\begin{split}
        \lim_{t \to \infty} \langle \phi, U_D(0,t) \uwl(t) \rangle =& \lim_{t \to \infty} \langle \phi, U_D(0,t) \urem(t) \rangle + \langle \phi, U_D(0,t) (\uwl(t) - \urem(t)) \rangle\\
        =& 0 + \lim_{t \to \infty} \langle \phi, U_D(0,t) (\uscat(t) - U_D(t,0) u_+) \rangle\\
        =& 0
    \end{split}\end{equation*}
    which verifies~\eqref{eqn:uwl-weak-bound}.
\end{proof}
In particular, we note that the decompositions~\eqref{eqn:main-decomp} and~\eqref{eqn:u-redecomp} are asymptotically equivalent:
\begin{corollary}\label[corollary]{cor:equiv-decomps}
    In the decompositions~\eqref{eqn:main-decomp} and~\eqref{eqn:u-redecomp}, 
    \begin{equation*}
        \lim_{t \to \infty} \lVert \uscat(x,t) - U_D(t,0) u_+(x) \rVert_{L^2} = 0
    \end{equation*}
    and 
    \begin{equation*}
        \lim_{t \to \infty} \lVert \uwl(x,t) -\urem(x,t) \rVert_{L^2} = 0
    \end{equation*}
\end{corollary}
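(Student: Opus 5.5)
The first limit is essentially already contained in the proof of \Cref{prop:u-redecomp}: I would simply repeat the computation \eqref{eqn:asymp-equiv-uscat-mod-free-state}. Recall that $\uscat(t) = U(t,0)\Omega_D\Omega_D^* u_0$ and $u_+ = \Omega_D^* u_0$, so that $\uscat(t) = U(t,0)\Omega_D u_+$. Since $U(0,t)$ is unitary on $L^2$ by \Cref{lem:L2-H1-bd}, we get
\begin{equation*}
    \lVert \uscat(t) - U_D(t,0)u_+ \rVert_{L^2} = \lVert \Omega_D u_+ - U(0,t)U_D(t,0)u_+ \rVert_{L^2}.
\end{equation*}
The right-hand side tends to zero as $t \to \infty$ by the strong limit in \Cref{cor:full-WO}, applied to the fixed vector $u_+ \in L^2$.

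For the second limit, I would write both decompositions of the same solution $u(t)$, namely
\begin{equation*}
    u(t) = U_D(t,0)u_+ + \urem(t) = \uscat(t) + \uwl(t).
\end{equation*}
Subtracting the two gives the exact identity $\uwl(t) - \urem(t) = U_D(t,0)u_+ - \uscat(t)$. Its $L^2$ norm therefore tends to zero by the first part. No additional decay is required.

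There is no real obstacle here, but two small points deserve a remark. First, the corollary needs only $u_0 \in L^2$, since the definitions of $\uscat$, $\uwl$ and $\urem$ make sense on $L^2$; the finite energy hypothesis in \Cref{prop:u-redecomp} is used only to place the pieces in $H^1$. Second, the one thing that genuinely must be checked is that the $u_+$ in \eqref{eqn:main-decomp} is the same vector $\Omega_D^*u_0$ used to define $\uscat$. This holds by construction, because $u_+$ was defined as $\Omega_D^* u_0$ through \Cref{prop:weak-WO}.
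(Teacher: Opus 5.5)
Your proposal is correct and matches the paper's argument. The first limit is exactly the computation \eqref{eqn:asymp-equiv-uscat-mod-free-state} in the proof of \Cref{prop:u-redecomp}. The second follows from the exact identity $\uwl(t) - \urem(t) = U_D(t,0)u_+ - \uscat(t)$, which you get by subtracting the two decompositions of $u(t)$.
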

Thus,~\eqref{eqn:urem-slow-spreading} will follow once we prove the following result:
\begin{theorem}\label{thm:uwl-sublin-spreading}
    If $\uwl(t)$ is \textit{weakly bound} in the sense that
    \begin{equation*}
        \wlim_{t \to \infty} U_D(0,t) \uwl(t) = 0
    \end{equation*}
    then for any $\epsilon > 0$,
    \begin{equation}\label{eqn:uwl-sublin-spread}
        \lim_{t \to \infty} \left\lVert F\left(\frac{|x|}{\epsilon t} \geq 1\right) \uwl(x,t) \right\rVert_{L^2_x} = 0
    \end{equation}
\end{theorem}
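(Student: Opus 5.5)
The proof follows the three-step strategy sketched in the introduction. Throughout, $\uwl(t) = U(t,0)\uwl(0)$ is a solution of~\eqref{eqn:main}, so propagation estimates are available. We work with $\alpha \in (\beta,1)$ and an exterior cutoff $F_\alpha = F\big(\frac{|x|}{t^\alpha}\geq 1\big)$.

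\textbf{Step 1: exterior radial momentum vanishes.} The first key step is
\begin{equation*}
    \lim_{t\to\infty}\langle F_\alpha \gamma F_\alpha\rangle_t = 0 .
\end{equation*}
Heuristically, on $\{|x|\gtrsim t^\alpha\}$ with $\alpha>\beta$ only $\Vext$ acts. We therefore introduce the exterior propagator $U_{\ext}$, generated by $-\Delta+\Vext$, and prove that its wave operators relative to the Dollard flow exist. The argument repeats Cook's method from~\Cref{sec:exterior-bounds}, where only $\Vext$ appeared, with the additional control $|x-2pt|\lesssim t^{1/2+}$ (the parameter $\rho$).

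The outgoing part of $F_\alpha\uwl(t)$ is then asymptotically $U_D(t,0)$ of something. Its pairing with $\uwl$ vanishes by the weak-bound hypothesis, approximating $\gamma$ on this region by $\frac{x}{|x|}\cdot p$, which is nearly diagonal for the Dollard flow. The incoming part is controlled backward: an incoming wave at radius $\gtrsim t^\alpha$ at time $t$ must have been at radius $\gtrsim t^\alpha$ at earlier times, where $\uwl$ has small mass. Quantitatively, we would decompose $F_\alpha\gamma F_\alpha$ with a smooth cutoff $F(\pm\gamma\geq\delta)$ and use minimal and maximal velocity propagation estimates for the exterior flow, as in~\cite{sofferScatteringLocalizedStates2024a,liu2025large}.

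I expect this to be the main obstacle. The Dollard correction $\nabla_\xi\nu\sim t^{1-\mu}$ must remain below the $t^{1/2}$ scale. In addition, the interior–exterior commutator errors $[\Vint, F_\alpha]$ must be integrable, which is exactly where $\beta>\frac{3}{2+2\mu}$ and $\rho>2-(1+\mu)\beta$ enter. I would first establish the limit for $\uwl(0)$ Schwartz with compact Fourier support away from $0$. The limit then passes to all weakly bound $L^2$ data by density, since the map $\uwl(0)\mapsto$ (weakly bound part) is bounded and $F_\alpha\gamma F_\alpha$ can be regularized by a momentum cutoff using the $H^1$ bound.

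\textbf{Step 2: a growth bound via a bounded weight.} Let $G_R(x)$ be a smooth version of $\max(|x|,R)$, so that $|\nabla G_R|\le 1$ and $\nabla G_R=0$ for $|x|\le R$. The Heisenberg derivative is
\begin{equation*}
    D_H(F_\alpha G_R F_\alpha) = F_\alpha\,\tfrac12\big(p\cdot\nabla G_R+\nabla G_R\cdot p\big)F_\alpha + \text{(terms from } \partial_t F_\alpha \text{ and } [\Delta,F_\alpha]\text{)} .
\end{equation*}
The main term is $\lesssim |\langle F_\alpha\gamma F_\alpha\rangle_t| + o(1)$ after symmetrizing with~\Cref{lem:three-term-sym}. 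The cutoff errors are $O(t^{-\alpha}\,R)$ times $H^1$ norms, plus a $\partial_t F_\alpha$ term with a favorable or small sign. Integrating from $t_0$ to $t$ gives
\begin{equation*}
    \langle F_\alpha G_R F_\alpha\rangle_t \le \langle F_\alpha G_R F_\alpha\rangle_{t_0} + \int_{t_0}^t o(1)\,ds + o(R+t) .
\end{equation*}
The initial term is at most $R + \lVert |x|\,\uwl(t_0)\rVert_{L^2(|x|>R)}$-type quantities, which are $R(1+o(1))$ for fixed $t_0$. A more careful version uses $G_R-R$ instead, which vanishes on $|x|\le R$, making the initial term $o(R)$ by $L^2$ tightness of $\uwl(t_0)$.

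\textbf{Step 3: Markov's inequality.} Take $R=\epsilon t/2$ and apply Step 2 to $G_R-R$, which gives $\langle F_\alpha (G_R-R)F_\alpha\rangle_t = o(t)$. Since $G_R-R\gtrsim \epsilon t$ on $\{|x|\ge \epsilon t\}$, Markov's inequality yields
\begin{equation*}
    \big\lVert F\big(\tfrac{|x|}{\epsilon t}\geq1\big)F_\alpha\uwl\big\rVert_{L^2}^2 = o(1) .
\end{equation*}
Finally, $F\big(\frac{|x|}{\epsilon t}\ge1\big)F_\alpha=F\big(\frac{|x|}{\epsilon t}\ge1\big)$ for $t$ large because $\alpha<1$. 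This proves~\eqref{eqn:uwl-sublin-spread}.
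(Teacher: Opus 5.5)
Taking your Step 1 as given (it is \Cref{prop:rad-mom-vanishes}), the gap is in Step 2, at the claim that the main term is $\lesssim|\langle F_\alpha\gamma F_\alpha\rangle_t|+o(1)$ after symmetrizing. Up to commutators, that term is $\langle \sqrt{G_R'}\,F_\alpha\gamma F_\alpha\sqrt{G_R'}\rangle_t$. This is radial momentum further localized to $\{|x|\gtrsim R\}$, or to the annulus $t^\alpha\lesssim|x|\lesssim R$ for a bounded truncation. Since $\gamma$ is not sign-definite, no inequality of the form $|\langle A^*\gamma A\rangle_t|\lesssim|\langle\gamma\rangle_t|$ holds: outgoing momentum beyond $R$ can be cancelled inside $\langle F_\alpha\gamma F_\alpha\rangle_t$ by incoming momentum in $t^\alpha<|x|<R$. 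You would need vanishing of exterior radial momentum at the \emph{fixed} radius $R\sim\epsilon t$ for all intermediate times $s\in[t_0,t]$. \Cref{prop:rad-mom-vanishes} does not give this, because there the radius is tied to the time as $s^\alpha$.

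The paper gets around this with a second-order argument. With the bounded weight, $\gamma_R$ is supported in $|x|\le 2R$, so $\langle F_\alpha\gamma_RF_\alpha\rangle_s\equiv0$ for $s\ge(2R)^{1/\alpha}$. One then writes $\langle F_\alpha\gamma_RF_\alpha\rangle_t=-\int_t^{(2R)^{1/\alpha}}\frac{d}{ds}\langle F_\alpha\gamma_RF_\alpha\rangle_s\,ds$. The derivative consists of a positive-commutator term (from $[-i\Delta,\gamma]\ge 0$), perturbative errors, and an unfavorable term $4\langle F_\alpha\gamma G_R''\gamma F_\alpha\rangle_s$ localized at $|x|\sim R$. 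That term is bounded by $R^{-1}\langle\sqrt{\tilde{F}_R\tilde{F}_R'}\gamma^2\sqrt{\tilde{F}_R\tilde{F}_R'}\rangle_s$ and integrated using the propagation estimate for $\tilde{F}_R\gamma\tilde{F}_R$. Two boundary terms remain. The term $\langle\tilde{F}_R\gamma\tilde{F}_R\rangle_{(2R)^{1/\alpha}}$ is small by \Cref{prop:rad-mom-vanishes} precisely because radius and time are matched there ($R\sim t^\alpha$). The term $\langle\tilde{F}_R\gamma\tilde{F}_R\rangle_0$ is small by dominated convergence. The outcome is the one-sided bound $\langle F_\alpha\gamma_RF_\alpha\rangle_t\le o_{R\to\infty}(1)+o_{t\to\infty}(1)$. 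This is the real content of the theorem, and it is missing from your proposal.

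Your weight is also the wrong one. You take $G_R\approx\max(|x|,R)$; the introduction's sketch says ``max'', but \Cref{sec:sublin-spread} actually uses $G_R=RG(|x|/R)\approx\min(|x|,R)$, which is bounded by $R$. The theorem assumes no moments on $\uwl(0)$, so $\langle F_\alpha(G_R-R)F_\alpha\rangle_{t_0}\approx\int(|x|-R)_+|\uwl(x,t_0)|^2\,dx$ can be $+\infty$ for every $R$. Tightness only gives $\lVert\uwl(t_0)\rVert_{L^2(|x|>R)}\to0$, not a finite first moment, so your Step 2 inequality is vacuous. Density does not rescue this. Weakly bound states form the closed subspace $\ker\Omega_D^*$, and there is no reason those with $|x|^{1/2}\uwl(0)\in L^2$ should be dense in it; this is exactly why the result cannot be deduced from \Cref{thm:better-rate}. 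Markov's inequality only needs $G_R\gtrsim R$ on $\{|x|\ge R\}$, which the bounded truncation already provides. For that truncation the initial term is $o(R)$ by \Cref{lem:GR-vanishing}. Note also that with the bounded weight the $F_\alpha$-boundary term has operator norm $O(Rt^{-\alpha})$. Integrated pointwise this gives $O(Rt^{1-\alpha})$, which is not $o(t)$ for $R\sim\epsilon t$, so it must instead be handled by the time-integrated bound of \Cref{prop:single-gamma-PRES}.
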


The proofs of~\Cref{thm:uwl-sublin-spreading,thm:better-rate} depend on the fact that weakly bound states have vanishing radial momentum.  We will formalize this idea in~\Cref{sec:slow-spread}, and show how it leads to a proof of~\Cref{thm:better-rate}.  We then show how to adapt the argument to prove~\Cref{thm:uwl-sublin-spreading} in~\Cref{sec:sublin-spread}.

\section{Slow spreading for initially localized data}\label{sec:slow-spread}

Let $\uwl$ be a weakly bound solution of~\eqref{eqn:main} satisfying the hypotheses of~\Cref{thm:better-rate}. Since we wish to prove that $\uwl$ spreads slowly, it would be natural to study the evolution of the quantity
\begin{equation*}
    \langle |x| \rangle_t = \int |x| |\uwl(x,t)|^2\;dx
\end{equation*}
(or in the case of~\Cref{thm:main-thm}, to study cut-off versions of this quantity).  The time derivative of $\langle |x| \rangle_t$ is given by
\begin{equation*}
    \frac{d}{dt} \langle |x| \rangle_t = 2\langle \gamma \rangle_t
\end{equation*}
where
\begin{equation*}
    \gamma = \frac{1}{2}[-i\Delta, |x|] = \frac{1}{2i} \left(\frac{x}{|x|} \cdot \nabla + \nabla \cdot \frac{x}{|x|}\right)
\end{equation*}
Intuitively, $\gamma$ measures the radial component of the momentum of the solution.  For time-independent potentials, nonscattering solutions must be bound states, so in particular they satisfy $\langle \gamma \rangle_t = 0$.  Results of this type do not appear to be feasible for time-dependent potentials $V$, since we have little control over what happens near the origin where $V$ is large.  However, it turns out to be possible to control radial momentum in exterior regions:
\begin{proposition}\label[proposition]{prop:rad-mom-vanishes}Let $\alpha > \beta$.  For any weakly bound solutions $\uwl$, 
    \begin{equation}
        \lim_{t \to \infty} \langle F_\alpha \gamma F_\alpha \rangle_t = 0
    \end{equation}
    where
    \begin{equation*}
        F_\alpha := F\left( \frac{|x|}{t^\alpha} \geq 1\right)
    \end{equation*}
    is a smooth cut-off to the region $|x| \gtrsim t^\alpha$.
\end{proposition}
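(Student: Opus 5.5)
The plan is to show that in the exterior region $|x|\gtrsim t^\alpha$ the solution $\uwl$ is governed, up to errors vanishing as $t\to\infty$, by the dynamics generated by $-\Delta+\Vext$. Since $\alpha>\beta$, $\Vint$ vanishes on $\supp F_\alpha$ for large $t$. We then use the existence of a wave operator for the exterior dynamics relative to the Dollard flow: any piece of $\uwl$ living far out with outgoing radial momentum must scatter, which the weak-bound hypothesis forbids.

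\textbf{Step 1: exterior wave operator.} Let $U_{\ext}(t,s)$ be the propagator of $-\Delta+\Vext(x,t)$. I would prove that
\begin{equation*}
\Omega_{\ext}^*=\slim_{t\to\infty}U_D(0,t)U_{\ext}(t,0)
\end{equation*}
exists in $L^2$, and also the reverse strong limit. The reverse limit $\slim U_{\ext}(0,t)U_D(t,0)$ follows exactly as in \Cref{cor:full-WO}: the interior term is even absent, and the exterior bounds of \Cref{sec:exterior-bounds} apply verbatim. For the adjoint (asymptotic completeness) direction, $\Vext$ is small: $|D^a_x\Vext|\lesssim t^{-(a+\mu)\beta}$, with $(1+\mu)\beta>3/2$. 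This is where I would invoke a propagation estimate, which I take to be the role of \Cref{prop:ESG}. For data with $\hat\phi$ supported away from $0$, $U_{\ext}(t,0)\phi$ stays concentrated where $|x-2pt|\lesssim t^\rho$, and there Cook's argument gives an integrable difference:
\begin{equation*}
\Vext(x,t)-\Vext(2pt,t)=O\!\left(t^{\rho-(1+\mu)\beta}\right).
\end{equation*}
This is integrable precisely when $\rho>2-(1+\mu)\beta$ (cf.\ \Cref{tab:constants}).

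\textbf{Step 2: reduce to the exterior flow.} For a fixed large $T$, split
\begin{equation*}
\uwl(T)=F_\alpha(T)\uwl(T)+(1-F_\alpha(T))\uwl(T).
\end{equation*}
Decompose the exterior piece further with a microlocal cutoff into outgoing ($\gamma>0$) and incoming ($\gamma<0$) parts. For the outgoing part $w$ (positive radial momentum, located at $|x|\gtrsim T^\alpha\gg T^\beta$), a minimal/maximal velocity propagation estimate shows that $U(t,T)w$ and $U_{\ext}(t,T)w$ agree up to $o_{T\to\infty}(1)$ for all $t\ge T$. The outgoing region never re-enters $|x|\lesssim t^\beta$, and there is no obstruction since $\alpha>\beta$. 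By Step 1, $U_{\ext}(t,T)w$ approaches $U_D(t,0)w_+$ for some $w_+$, and $\|w_+\|\approx\|w\|$.

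The incoming part is handled by reversing time: on $[t_0,T]$ with $t_0\ll T$ it comes from larger radii and is also well approximated backwards by the exterior flow. Combining the two, $\langle F_\alpha\gamma F_\alpha\rangle_T$ is controlled by
\begin{equation*}
\langle \uwl(T), F_\alpha\gamma F_\alpha\, \Omega_{\ext}\Omega_{\ext}^* \uwl(T)\rangle
\end{equation*}
plus vanishing errors.

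\textbf{Step 3: use weak boundedness.} By \Cref{prop:weak-WO}, $\wlim U_D(0,t)\uwl(t)=0$. Write the outgoing asymptotic state as a strong limit, $w_+=\lim_t U_D(0,t)U(t,T)w$. Its pairing against $U_D(0,t)\uwl(t)$ tends to $0$, but it equals $\langle w,\uwl(T)\rangle+o(1)$. Hence $\langle \uwl(T), P^+_T\uwl(T)\rangle\to0$, where $P^+_T$ is the outgoing exterior localization. Symmetrically, the incoming contribution is controlled, and since $\gamma$ is bounded by $\lVert u\rVert_{H^1}$ (or after a density argument in $H^1$), $\langle F_\alpha\gamma F_\alpha\rangle_t\to0$.

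\textbf{Main obstacle.} The hard part is Step 1's completeness for $U_{\ext}$ together with the outgoing/incoming propagation estimates in Step 2. These require showing that the drift $|x-2pt|$ grows at most like $t^\rho$ under the exterior flow. It is also awkward to handle the incoming part without any limit of $\langle\gamma\rangle_t$ being available. I would first attempt a Heisenberg-derivative argument for the observable $(x-2pt)^2/t^{2\rho}$, localized away from zero frequency.
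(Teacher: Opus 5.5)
\textbf{Assessment: genuine gap.} Your Step~1 corresponds to the paper's $\Omega_{D,\ext}$, but Steps~2--3, which carry the proof, are missing their key estimates.

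\emph{The incoming part.} It cannot be handled ``symmetrically''. The hypothesis $\wlim_{t\to\infty}U_D(0,t)\uwl(t)=0$ is one-sided in time, so there is nothing to pair against when you run backwards. What you would need is to trace $P_T^-F_\alpha(T)\uwl(T)$ back to a \emph{fixed} time $t_0$ and show it then sits at $|x|\gtrsim\delta T$. (If $t_0\ll T$ depends on $T$, then $\uwl(t_0)$ is not a fixed function and nothing is gained.) You could then use $\lVert \mathbf{1}_{|x|\gtrsim\delta T}\uwl(t_0)\rVert_{L^2}\to 0$. This requires a backward propagation estimate over all of $[t_0,T]$, which you do not supply; you concede this part is unresolved.

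\emph{The outgoing part.} The claim $\sup_{t\ge T}\lVert (U(t,T)-U_{\ext}(t,T))w_T\rVert=o(1)$ amounts to $\int_T^\infty\lVert \mathbf{1}_{|x|\lesssim s^\beta}U_{\ext}(s,T)P_T^+\rVert\,ds\to 0$. This must hold uniformly over a $T$-dependent family known only to lie in $H^1$. \Cref{prop:ESG} gives nothing here: it needs a moment bound at $t=1$, and its constant depends on the solution. Moreover, controlling $|x-2pt|$ says nothing about whether a slow piece avoids the interior. You would need Enss-type microlocal estimates for the long-range, time-dependent flow $U_{\ext}$.

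The split also needs a velocity threshold. An unthresholded $\gamma>0$ piece contains slow components that need not escape: a piece at rest at radius $T^\alpha$ is swallowed by $|x|\lesssim t^\beta$ at $t\sim T^{\alpha/\beta}$. So a third piece with $|\gamma|\lesssim\delta$ must be discarded because it contributes $O(\delta)$, and large momenta must be cut off using the $H^1$ bound.

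\emph{Smaller issues.} The reduction displayed at the end of Step~2 is vacuous: if both exterior wave operators exist on all of $L^2$, they are mutually inverse unitaries, so $\Omega_{\ext}\Omega_{\ext}^*=I$. In Step~1, $t^{\rho-(1+\mu)\beta}$ is integrable iff $\rho<(1+\mu)\beta-1$. The condition $\rho>2-(1+\mu)\beta$ is what \Cref{prop:ESG} needs, and the two are compatible exactly when $\beta>\frac{3}{2+2\mu}$. Also, frequency support away from $0$ is not preserved by $U_{\ext}$; this is why the paper inserts $F(4t^{1-\beta}|D|\ge 1)$.

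\textbf{The paper's route.} The paper avoids the outgoing/incoming decomposition by putting the observable inside the wave operator. It proves that $\slim_{t\to\infty}U_D(0,t)F_\alpha\gamma F_\alpha U(t,0)$ exists by factoring it as $\Omega_{D,\ext}\Omega_{\ext}^{F_\alpha\gamma F_\alpha}$; the low-frequency remainder is $O(t^{\beta-1})+O(t^{-\alpha})$. Then $\langle F_\alpha\gamma F_\alpha\rangle_t$ is the pairing of the weakly null sequence $U_D(0,t)\uwl(t)$ against a strongly convergent one, with no separate incoming piece to treat.

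For $\Omega_{\ext}^{F_\alpha\gamma F_\alpha}$ the paper uses Cook's method with $F_\alpha V=F_\alpha\Vext$, which is where $\alpha>\beta$ enters. The potential commutator and the symmetrization remainders are integrable in time. The main term $-2F_\alpha\nabla\cdot S\nabla F_\alpha+4t^{-\alpha}\sqrt{F_\alpha F'_\alpha}\gamma^2\sqrt{F_\alpha F'_\alpha}$ is not integrable in norm. It is instead controlled by a positive-commutator estimate for the mixed quantity $\langle U_{\ext}(t,0)\phi,F_\alpha\gamma F_\alpha u(t)\rangle$, combined with \Cref{cor:F-gamma-F-PRES}; this is where finite energy is used. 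That commutator argument is the idea your proposal lacks: it removes the need for any uniform phase-space propagation estimates for $U_{\ext}$.
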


To prove~\Cref{prop:rad-mom-vanishes}, we will show that
\begin{equation}\label{eqn:Omega-D-F-gamma-F}
    \Omega_{D}^{F_\alpha\gamma F_\alpha} := \slim_{t \to \infty} U_D(0,t) F_\alpha \gamma F_\alpha U(t,0)
\end{equation}
exists.  Since $U_D(0,t) \uwl \rightharpoonup 0$ in the weak $L^2$ topology, this implies that
\begin{equation*}\begin{split}
    \lim_{t \to \infty} \langle F_\alpha \gamma F_\alpha \rangle_t =& \lim_{t \to \infty} \langle U_D(0,t) \uwl(t), \Omega_D^{F_\alpha\gamma F_\alpha} \uwl(0) \rangle\\
    &+ \langle U_D(0,t) \uwl(t), U_D(0,t) F_\alpha\gamma F_\alpha \uwl(t) - \Omega_D^{F_\alpha\gamma F_\alpha} \uwl(0)\rangle\\
    =& 0
\end{split}\end{equation*}
To prove that~\eqref{eqn:Omega-D-F-gamma-F} exists, it suffices to prove the existence of the auxiliary wave operators
\begin{equation}\label{eqn:Omega-ext-F-gamma-F}
    \Omega_\ext^{F_\alpha \gamma F_\alpha} := \slim_{t \to \infty} U_\ext(0,t) F_\alpha \gamma F_\alpha U(t,0)
\end{equation}
and
\begin{equation}
    \Omega_{D,\ext} := \slim_{t \to \infty} U_D(0,t) F\left(4t^{1-\beta}|D| \geq 1\right) U_\ext(t,0)
\end{equation}
where $U_\ext(t,s)$ is the evolution semigroup for the equation
\begin{equation}\label{eqn:ext-schro}
    i\partial_t u + \Delta u = V_\ext(x,t) u
\end{equation}
since the composition rule for strong limits then shows that
\begin{equation*}\begin{split}
    \Omega_{D,\ext} \Omega_\ext^{F_\alpha \gamma F_\alpha} =& \slim_{t \to \infty} U_D(0,t) F\left(4t^{1-\beta}|D| \geq 1\right) F_\alpha \gamma F_\alpha U(t,0)\\
    =& \Omega_D^{F_\alpha \gamma F_\alpha} - \slim_{t \to \infty} U_D(0,t) F\left(4t^{1-\beta}|D| < 1\right) F_\alpha \gamma F_\alpha U(t,0)\\
    =& \Omega_D^{F_\alpha \gamma F_\alpha}
\end{split}\end{equation*}
where on the last line we have used that
\begin{equation*}\begin{split}
    F\left(t^{1-\beta}|D| < 1\right) F_\alpha \gamma F_\alpha =& F\left(4t^{1-\beta}|D| < 1\right) \gamma F_\alpha^2 + O(t^{-\alpha})\\
    =& O(t^{\beta-1}) + O(t^{-\alpha})
\end{split}\end{equation*}
vanishes as $t \to \infty$.

We will prove~\Cref{thm:better-rate} in three steps.  First, in~\Cref{sec:better-rate}, we will prove that~\Cref{prop:rad-mom-vanishes} implies~\Cref{thm:better-rate}.  We then prove the existence of $\Omega_{D,\ext}$ in~\Cref{sec:ext-D-WO}, and prove that $\Omega_\ext^{F_\alpha \gamma F_\alpha}$ exists in~\Cref{sec:ext-F-gamma-F-WO}.

\subsection{Proof of~\Cref{thm:better-rate}}\label{sec:better-rate}
Assuming~\Cref{prop:rad-mom-vanishes}, then~\Cref{thm:better-rate} follows from the following theorem:
\begin{theorem}\label{thm:optimal-spread-theorem}
     Let $\alpha \geq \alpha_0 := \frac{2}{2+\mu}$.  Suppose that $u(t)$ is a solution to~\eqref{eqn:main} such that
    \begin{equation}\label{eqn:OST-hypo-1}
        \lVert |x|^{1/2} u(1) \rVert_{L^2} < \infty
    \end{equation}
    and
    \begin{equation} \label{eqn:OST-hypo-2}
        \lim_{t \to \infty} \langle F_\alpha \gamma F_\alpha \rangle_t = 0
    \end{equation}
    Then, for $t \geq 1$,
    \begin{equation*}
        \langle |x| \rangle_t \lesssim_u t^{\alpha}
    \end{equation*}
\end{theorem}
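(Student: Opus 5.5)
Split $\langle |x| \rangle_t = \langle (1-F_\alpha^2)|x|\rangle_t + \langle F_\alpha |x| F_\alpha\rangle_t$. Since $1-F_\alpha^2$ is supported in $|x|\le 2t^\alpha$ and $U(t,1)$ is unitary, the first term is at most $2t^\alpha\lVert u\rVert_{L^2}^2$. It therefore suffices to show $\langle F_\alpha |x| F_\alpha \rangle_t \le \langle F_\alpha|x|F_\alpha\rangle_1 + O(t^\alpha)$. The initial term is finite by \eqref{eqn:OST-hypo-1}.

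The plan is to compute the Heisenberg derivative
\begin{equation*}
D_H(F_\alpha |x| F_\alpha) = \partial_t(F_\alpha|x|F_\alpha) + i[-\Delta+V, F_\alpha|x|F_\alpha].
\end{equation*}
The potential commutes with multiplication operators, so only $\Delta$ and $\partial_t$ contribute. Writing $F_\alpha|x|F_\alpha = |x|F_\alpha^2$, the commutator term is $2F_\alpha\gamma F_\alpha$ plus terms involving $\nabla F_\alpha$. The cutoff derivative satisfies $|\nabla F_\alpha| \lesssim t^{-\alpha}$ and is supported in $|x|\sim t^\alpha$, so these extra terms are of the form $\frac{|x|}{t^\alpha}\tilde F(\cdot)\,\gamma$, localized to the shell $t^\alpha\le|x|\le 2t^\alpha$. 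The time derivative $\partial_t F_\alpha^2 \cdot |x| = -\alpha t^{-1}\frac{|x|}{t^\alpha}F'(\cdot)\,|x|\cdots$ has a definite sign, since $F(s\ge1)$ is increasing and the time derivative pushes the support outward. This term is nonpositive, so it can be dropped from an upper bound. I will use the assumed smoothness of $\sqrt{F'}$ to write the shell terms symmetrically as $\tilde F\gamma\tilde F$, with $\tilde F$ a shell cutoff and commutator errors of size $O(t^{-\alpha})$.

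The task then reduces to proving $\langle F_\alpha\gamma F_\alpha\rangle_t \lesssim t^{\alpha-1}$, together with the same bound for the shell terms. This gives
\begin{equation*}
\langle F_\alpha|x|F_\alpha\rangle_t - \langle F_\alpha|x|F_\alpha\rangle_1 \lesssim \int_1^t s^{\alpha-1}\,ds \lesssim t^\alpha.
\end{equation*}
To obtain the rate, I would integrate backward from infinity using \eqref{eqn:OST-hypo-2}:
\begin{equation*}
\langle F_\alpha\gamma F_\alpha\rangle_t = -\int_t^\infty \langle D_H(F_\alpha\gamma F_\alpha)\rangle_s\,ds.
\end{equation*}
Now $i[-\Delta,\gamma] = 4\,p\cdot\frac{P^\perp_x}{|x|}p + O(|x|^{-3})$ is nonnegative up to the $|x|^{-3}$ error, whose contribution on $|x|\gtrsim s^\alpha$ integrates to $O(t^{-2\alpha})$. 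So $-\langle D_H\rangle$ is bounded above by the following contributions:
\begin{itemize}
\item the potential term $-\langle F_\alpha\,\hat{x}\cdot\nabla V\,F_\alpha\rangle$, which is $O(s^{-(1+\mu)\alpha})$;
\item the cutoff-derivative terms $t^{-1}\langle \tilde F\gamma\tilde F\rangle$ and $t^{-\alpha}\langle \tilde F\gamma^2\tilde F\rangle$-type terms.
\end{itemize}
The potential term integrates to $t^{1-(1+\mu)\alpha}$, which is $\le t^{\alpha-1}$ exactly when $\alpha\ge \frac{2}{2+\mu}=\alpha_0$. This is where the threshold comes from.

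The main obstacle is the set of cutoff-derivative terms. They contain $\gamma^2$, i.e.\ radial kinetic energy on the shell, which is not a priori small, and the time-derivative term has no favorable sign for the upper bound on $\gamma$. My first attempt would be to choose the cutoff so that the commutator $i[-\Delta,F_\alpha^2]$ produces a positive contribution $F_\alpha'\gamma^2$-like term with a good sign, and then absorb the dangerous terms. If that fails, I would run a bootstrap. Set $a(t)=\sup_{s\ge t}s^{1-\alpha}|\langle \tilde F_s\gamma\tilde F_s\rangle_s|$ over a family of shell cutoffs, estimate the kinetic-energy terms by the energy-type quantity $\langle \tilde F\, p\,\tilde F\cdot\hat x\cdots\rangle$ via Cauchy–Schwarz, and close using the monotone structure together with the decay of $V$ ($|\nabla V|\lesssim s^{-(1+\mu)\alpha}$ on the support).
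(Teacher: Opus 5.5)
Your outline matches the paper's route: discard $\langle (1-F_\alpha^2)|x|\rangle_t \lesssim t^\alpha$, drop the nonpositive $\partial_t F_\alpha$ term, get a one-sided rate for $\langle F_\alpha\gamma F_\alpha\rangle_t$ by integrating its derivative back from $t=\infty$ using \eqref{eqn:OST-hypo-2}, and let the potential term $O(t^{-(1+\mu)\alpha})$ fix the threshold $\alpha_0$. However, the step you call the main obstacle is left open, and you have its sign backwards.

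The boundary commutator $[-i\Delta,F_\alpha]\gamma F_\alpha + F_\alpha\gamma[-i\Delta,F_\alpha]$ symmetrizes to $+4t^{-\alpha}\sqrt{F_\alpha F_\alpha'}\,\gamma^2\sqrt{F_\alpha F_\alpha'} + O(t^{-3\alpha})$; see \eqref{eqn:wls-bdy-comm}. It is nonnegative simply because $F(s\geq 1)$ is increasing, so no special choice of cutoff is needed. In an upper bound for $-\frac{d}{dt}\langle F_\alpha\gamma F_\alpha\rangle_t$ it helps rather than hurts. It is exactly the term that absorbs the only sign-indefinite contribution, the one coming from $\partial_t F_\alpha$. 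Since $|x|\le 2t^\alpha$ on the shell, Cauchy--Schwarz gives, for any $\epsilon>0$ and up to lower-order commutator errors (compare \eqref{eqn:wls-time-deriv}),
\begin{equation*}
\alpha\left|\left\langle \frac{|x|}{t^{\alpha+1}}F_\alpha'\gamma F_\alpha + F_\alpha\gamma F_\alpha'\frac{|x|}{t^{\alpha+1}}\right\rangle_t\right| \leq \epsilon\, t^{-\alpha}\bigl\langle \sqrt{F_\alpha F_\alpha'}\,\gamma^2\sqrt{F_\alpha F_\alpha'}\bigr\rangle_t + C_\epsilon\, t^{\alpha-2}\lVert u\rVert_{L^2}^2 .
\end{equation*}
Hence $\frac{d}{dt}\langle F_\alpha\gamma F_\alpha\rangle_t \geq -C(t^{\alpha-2}+t^{-(1+\mu)\alpha}+t^{-3\alpha}) \geq -Ct^{\alpha-2}$ for $\alpha\geq\alpha_0$. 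Integrating over $[t,\infty)$ then gives $\langle F_\alpha\gamma F_\alpha\rangle_t\lesssim t^{\alpha-1}$; this Cauchy--Schwarz remainder is what actually saturates the rate. Your fallback bootstrap never says what would control $\langle\tilde F\gamma^2\tilde F\rangle$, so as written it is not an argument.

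There is a second gap in how you handle the extra shell term in $\frac{d}{dt}\langle F_\alpha|x|F_\alpha\rangle_t$. You are right that $i[-\Delta,|x|F_\alpha^2] = 2F_\alpha\gamma F_\alpha + 4t^{-\alpha}\sqrt{|x|F_\alpha F_\alpha'}\,\gamma\sqrt{|x|F_\alpha F_\alpha'}$; the paper's displayed formula in this proof actually omits the second term. But you cannot get ``the same bound'' $t^{\alpha-1}$ for it pointwise in time. The positivity mechanism above needs a monotone cutoff, and for the bump $\sqrt{|x|F_\alpha F_\alpha'/t^\alpha}$ the corresponding boundary term has no sign, so backward integration gives no one-sided bound. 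What suffices, and is true, is the time-integrated estimate
\begin{equation*}
\int_1^t s^{-\alpha}\bigl\langle \sqrt{|x|F_\alpha F_\alpha'}\,\gamma\sqrt{|x|F_\alpha F_\alpha'}\bigr\rangle_s\,ds = O(t^\alpha).
\end{equation*}
This follows exactly as in \Cref{prop:single-gamma-PRES} with $G_R(r)$ replaced by $r$. Take $K(0,t)=0$ and $\partial_r K(r,t) = \frac{r}{t^\alpha}F_\alpha F_\alpha'$. Then $0\le K\lesssim t^\alpha$ and $|\partial_t K|\lesssim t^{\alpha-1}$, and the shell term equals $2\frac{d}{dt}\langle K(|x|,t)\rangle_t + O(t^{\alpha-1})$. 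With these two repairs your argument becomes the paper's.
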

\begin{remark}
    The range $\alpha \geq \alpha_0$ in the hypothesis for~\Cref{thm:optimal-spread-theorem} is larger than the range $\alpha > \beta > \frac{3}{2+2\mu}$ allowed by~\Cref{prop:rad-mom-vanishes}.  We expect that some version of~\Cref{prop:rad-mom-vanishes} also holds for $\alpha > \alpha_0$ (and possibly for $\alpha = \alpha_0$), at which point~\Cref{thm:optimal-spread-theorem} would immediately give improvements to the rate in~\Cref{thm:better-rate} (C.f.~\Cref{rmk:rate-improvements}).
\end{remark}
\begin{proof}
    The point here is that
    \begin{equation*}
        \lVert |x|^{1/2} u(x,t) \rVert_{L^2}^2 = \langle |x| \rangle_t = \langle F_\alpha |x| F_\alpha \rangle_t + \langle (1 - F_\alpha^2) |x| \rangle_t
    \end{equation*}
    From the definition of $F_\alpha$, we see at once that
    \begin{equation*}
        \langle (1 - F_\alpha^2) |x| \rangle_t \lesssim t^{\alpha}
    \end{equation*}
    so it is sufficient to prove that $\langle F_\alpha |x| F_\alpha \rangle_t$ grows at worst like $t^\alpha$.  Differentiating, we see that
    \begin{equation*}\begin{split}
        \frac{d}{dt} \langle F_\alpha |x| F_\alpha \rangle_t =& 2\langle F_\alpha \gamma F_\alpha \rangle_t- 2\alpha \langle \frac{|x|^2}{t^{\alpha + 1}} F'_\alpha F_\alpha \rangle_t
    \end{split}\end{equation*}
    Since $F$ is increasing monotonically, the second term here is manifestly negative (it represents the mass that is `lost' as the boundary of the cut-off expands outward), and we will be finished if we can show that
    \begin{equation*}
        \int_1^t \langle F_\alpha \gamma F_\alpha \rangle_s\;ds \lesssim t^\alpha
    \end{equation*}
    To see that this is the case, we will need to take a further derivative:
    \begin{equation}\label{eqn:F-gamma-F-Heis}\begin{split}
        \frac{d}{dt} \langle F_\alpha \gamma F_\alpha \rangle_t =& \left\langle F_\alpha [-i\Delta, \gamma] F_\alpha \right\rangle_t + \left\langle [-i\Delta, F_\alpha] \gamma F_\alpha + F_\alpha \gamma [-i\Delta, F_\alpha]\right\rangle_t\\
        &+ \left\langle F_\alpha [iV, \gamma] F_\alpha \right\rangle_t -\alpha \left\langle \frac{|x|}{t^{\alpha+1}} F'_\alpha \gamma F_\alpha + F_\alpha \gamma F'_\alpha \frac{|x|}{t^{\alpha+1}}\right\rangle_t
    \end{split}\end{equation}
    The first term is positive semi-definite:
    \begin{equation}\label{eqn:wls-symb-comm}
        \left\langle F_\alpha [-i\Delta, \gamma] F_\alpha \right\rangle_t = -2 \left\langle F_\alpha \nabla_x \cdot S(x) \nabla_x F_\alpha \right\rangle_t
    \end{equation}
    where
    \begin{equation*}
        S(x) = \frac{1}{|x|} \left(I - \frac{x}{|x|} \otimes \frac{x}{|x|}\right)
    \end{equation*}
    For the second term, we use~\Cref{lem:three-term-sym} to symmetrize:
    \begin{equation}\label{eqn:wls-bdy-comm}\begin{split}
        \langle [-i\Delta, F_\alpha] \gamma F_\alpha + F_\alpha \gamma [-i\Delta, F_\alpha]\rangle_t =& 2 t^{-\alpha} \langle F'_\alpha \gamma^2 F_\alpha + F_\alpha \gamma^2 F'_\alpha \rangle_t -i t^{-2\alpha} \langle F''_\alpha \gamma F_\alpha - F_\alpha \gamma F''_\alpha\rangle_t \\&
        -i t^{-\alpha} \left\langle \frac{2}{|x|}F'_\alpha \gamma F_\alpha - F_\alpha \gamma \frac{2}{|x|}F'_\alpha\right\rangle_t \\
        =& 4 t^{-\alpha} \langle \sqrt{F_\alpha F'_\alpha} \gamma^2 \sqrt{F_\alpha F'_\alpha} \rangle_t + O(t^{-3\alpha})
    \end{split}\end{equation}
    For the final term, we symmetrize and use Cauchy-Schwarz to find that
    \begin{equation}\label{eqn:wls-time-deriv}\begin{split}
        \left|\left\langle \frac{|x|}{t^{\alpha+1}} F'_\alpha \gamma F_\alpha + F_\alpha \gamma F'_\alpha \frac{|x|}{t^{\alpha+1}}\right\rangle_t \right| =& 2\left|\left\langle \frac{|x|}{t^{\alpha+1}} \sqrt{F_\alpha F'_\alpha} \gamma \sqrt{F_\alpha F'_\alpha}\right\rangle_t\right|\\
        \leq& 2 t^{-\alpha} \langle \sqrt{F_\alpha F'_\alpha} \gamma^2 \sqrt{F_\alpha F'_\alpha} \rangle_t + 8 \left\lVert \frac{|x|}{t^{\alpha/2+1}} \sqrt{F_\alpha F'_\alpha} u(x,t)\right\rVert_{L^2}^2\\
        \leq& 2 t^{-\alpha}\langle \sqrt{F_\alpha F'_\alpha} \gamma^2 \sqrt{F_\alpha F'_\alpha} \rangle_t + O(t^{\alpha -2})
    \end{split}\end{equation}
    Finally, the term involving the commutator with the potential has the slowest decay:
    \begin{equation}\label{eqn:wls-pot-comm}
        \left\langle F_\alpha [iV, \gamma] F_\alpha \right\rangle_t = O(t^{-(1+\mu)\alpha})
    \end{equation}
    Now, for $\alpha \geq \alpha_0 = \frac{2}{2+\mu}$, we have that $\alpha - 2 \geq  -(1+\mu)\alpha$.  Thus, after using the positive term in~\eqref{eqn:wls-bdy-comm} to cancel any negative contribution from~\eqref{eqn:wls-time-deriv}, we see that
    \begin{equation*}
        \frac{d}{dt} \langle F_\alpha \gamma F_\alpha \rangle_t \geq O(t^{\alpha-2})
    \end{equation*}
    Since $\lim_{t \to \infty} \langle F_\alpha \gamma F_\alpha \rangle_t = 0$, we can integrate to find that
    \begin{equation*}
        \max(0, \langle F_\alpha \gamma F_\alpha \rangle_t) \lesssim t^{\alpha - 1}
    \end{equation*}
    (Note that this is only a one-sided inequality: the negative part of $\langle F_\alpha \gamma F_\alpha \rangle_t$ also decays to $0$, but in general need not do so at a polynomial rate.)  Integrating once more now gives the slow spreading required in~\eqref{eqn:uwl-better-rate}, completing the proof of~\Cref{thm:better-rate}.
\end{proof}

For later use, we end this section by repackaging the above estimate on $\langle F_\alpha \gamma F_\alpha \rangle_t$ as a propagation estimate:
\begin{corollary}\label[corollary]{cor:F-gamma-F-PRES}
    For any $t_0 < t_1$ and any solution $u$ to~\eqref{eqn:main},
    \begin{equation}\label{eqn:F-gamma-F-PRES}
        \lVert \gamma \sqrt{F_\alpha F_\alpha'} u(x,t) \rVert_{L^2(t_0, t_1; L^2_x)}^2 = \langle F_\alpha \gamma F_\alpha \rangle_{t_1} - \langle F_\alpha \gamma F_\alpha \rangle_{t_0} + O(t_0^{\alpha - 1}\lVert u \rVert_{L^\infty_tL^2_x})
    \end{equation}
    In particular,
    \begin{equation}\label{eqn:F-gamma-F-PRES-inf}
        \lVert \gamma \sqrt{F_\alpha F_\alpha'} u(x,t) \rVert_{L^2_t(t_0, \infty; L^2_x)}^2 \lesssim_{t_0} \lVert u \rVert_{L^\infty_t H^1_x}^2
    \end{equation}
\end{corollary}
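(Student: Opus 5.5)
The plan is to integrate the Heisenberg identity~\eqref{eqn:F-gamma-F-Heis} from $t_0$ to $t_1$, as in the proof of~\Cref{thm:optimal-spread-theorem}, but now keeping the positive boundary term instead of discarding it. Concretely, I would write
\begin{equation*}
    \langle F_\alpha \gamma F_\alpha \rangle_{t_1} - \langle F_\alpha \gamma F_\alpha \rangle_{t_0} = \int_{t_0}^{t_1} \frac{d}{dt}\langle F_\alpha \gamma F_\alpha \rangle_t\;dt
\end{equation*}
and insert the four contributions computed in~\eqref{eqn:wls-symb-comm},~\eqref{eqn:wls-bdy-comm},~\eqref{eqn:wls-time-deriv} and~\eqref{eqn:wls-pot-comm}.

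The term~\eqref{eqn:wls-symb-comm} is nonnegative. The boundary commutator~\eqref{eqn:wls-bdy-comm} contributes $4t^{-\alpha}\lVert \gamma \sqrt{F_\alpha F_\alpha'} u \rVert_{L^2_x}^2$ up to $O(t^{-3\alpha})$. The time-derivative term~\eqref{eqn:wls-time-deriv} is controlled by at most half of that quantity plus $O(t^{\alpha-2})$. The potential term is $O(t^{-(1+\mu)\alpha})$. Moving the positive quantities to one side then gives a two-sided comparison between $\int_{t_0}^{t_1} t^{-\alpha}\lVert \gamma\sqrt{F_\alpha F_\alpha'}u\rVert^2\,dt$, plus the nonnegative $S$-term, and the difference of the $\langle F_\alpha\gamma F_\alpha\rangle$ values. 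The error integrates to
\begin{equation*}
    \int_{t_0}^\infty \bigl(t^{\alpha-2} + t^{-(1+\mu)\alpha} + t^{-3\alpha}\bigr)\,dt \lesssim t_0^{\alpha-1},
\end{equation*}
using $\alpha\ge\alpha_0$ so that $t^{\alpha-2}$ dominates. Each error bound is quadratic in $\lVert u\rVert_{L^\infty_tL^2_x}$, which is conserved.

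Two normalization points need care to match~\eqref{eqn:F-gamma-F-PRES} exactly. First, the factor $t^{-\alpha}$ (from $\partial_x F(|x|/t^\alpha)$) either has to be absorbed into the convention for $F_\alpha'$, i.e.\ $F_\alpha' = t^{-\alpha}F'(|x|/t^\alpha)$, or carried along as a weight. I would adopt the former reading, which matches how the statement is written. Second, the positive $S$-term from~\eqref{eqn:wls-symb-comm} is simply dropped on the side where it helps, so~\eqref{eqn:F-gamma-F-PRES} is really an inequality up to nonnegative terms plus the error. The equality form in the statement should be read as holding modulo the $O(\cdot)$ term together with the sign-definite contributions.

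For~\eqref{eqn:F-gamma-F-PRES-inf}, I would let $t_1\to\infty$ and bound $|\langle F_\alpha\gamma F_\alpha\rangle_t| \lesssim \lVert u(t)\rVert_{H^1}^2$, using that $\gamma$ is bounded from $H^1$ to $L^2$ away from the origin (where $F_\alpha$ lives). Together with $t_0^{\alpha-1}\lVert u\rVert_{L^2}^2\lesssim_{t_0}\lVert u\rVert_{L^\infty_tH^1_x}^2$, this gives the claimed estimate.

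The main obstacle is the time-derivative term~\eqref{eqn:wls-time-deriv}. The Cauchy--Schwarz split must leave a strictly positive fraction of the boundary term, so that it can be retained on the left. I would reweight that split with a constant smaller than the one used in~\eqref{eqn:wls-time-deriv} (for example $\tfrac14$ instead of $\tfrac12$). This costs only a larger constant in the $O(t^{\alpha-2})$ term, which is still integrable.
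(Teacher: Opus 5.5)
Your proposal is correct and is the argument the paper intends. The paper gives no separate proof: the corollary is the integrated form of \eqref{eqn:F-gamma-F-Heis} from the proof of~\Cref{thm:optimal-spread-theorem}. You keep the boundary term $4t^{-\alpha}\lVert \gamma\sqrt{F_\alpha F_\alpha'}u\rVert_{L^2_x}^2$, drop the nonnegative $S$-term, absorb \eqref{eqn:wls-time-deriv} by Cauchy--Schwarz, and then use $|\langle F_\alpha\gamma F_\alpha\rangle_t|\lesssim \lVert u\rVert_{L^\infty_tH^1_x}^2$ to get \eqref{eqn:F-gamma-F-PRES-inf}. Your reading of the statement is also the one the paper relies on later, when it invokes $t^{-\alpha/2}\gamma\sqrt{F_\alpha F_\alpha'}u\in L^2_{t,x}$: a one-sided bound with the weight $t^{-\alpha}$ and an error quadratic in $\lVert u\rVert_{L^\infty_tL^2_x}$.

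The reweighting of the Cauchy--Schwarz split that you flag as the main obstacle is not needed. The time-derivative term costs at most $2\alpha t^{-\alpha}\lVert\gamma\sqrt{F_\alpha F_\alpha'}u\rVert_{L^2_x}^2$, so a positive multiple $(4-2\alpha)t^{-\alpha}\lVert\gamma\sqrt{F_\alpha F_\alpha'}u\rVert_{L^2_x}^2$ of the boundary term already remains.
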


\subsection{Existence of the exterior-to-Dollard wave operator}\label{sec:ext-D-WO}

We now prove the existence of the exterior-to-Dollard wave operator $\Omega_{D,\ext}$.  Intuitively, the main idea is that near the microlocal propagation set $x = 2pt + \{\textup{error}\}$ for the free Schr\"odinger equation, the exterior potential $\Vext(x,t)$ and the Dollard potential $V(2pt,t)$ are approximately equal provided that the error term does not grow too quickly in time.  In the prior works~\cite{Soffer-Wu,sofferScatteringLocalizedStates2024a,liu2025large,stewartWeaklyLocalizedStates2025} that constructed free channel wave operators for Schr\"odinger equations, the slow growth of the error term was imposed through the definition of the free channel.  In our setting, we must prove that all states for the Schr\"odinger equation with the exterior potential $\Vext$ have dynamics given asymptotically by $U_D$, so it is not possible to make such a strong assumption.  Instead, we will first prove that we can prove the slow growth of the error on a dense set of states, and then use Cook's method to construct $\Omega_{D,\ext}$.

\subsubsection{Slow growth of the error term}

Slow growth of the error term is given quantitatively in the following theorem:
\begin{proposition}\label[proposition]{prop:ESG}
    Let $u$ be a solution to 
    \begin{equation}\label{eqn:ext-Schro}
        i\partial_t u + \Delta u = \Vext(x,t) u
    \end{equation}
    If $|x-2p| u(x,1) \in L^2_x$, then 
    \begin{equation}\label{eqn:error-slow-growth}
        \lVert |x - 2pt| u(x,t) \rVert_{L^2_x} \lesssim_{\rho, u} \jBra{t}^{\rho}
    \end{equation}
    for $\rho > 2 - \beta(1+\mu)$ (c.f.~\Cref{tab:constants}).
\end{proposition}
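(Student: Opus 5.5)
We will work with the vector-valued operator $A(t) = x - 2pt$, which is the Galilean "initial position" observable: under the free flow it is conserved, since
\begin{equation*}
    \partial_t A + i[-\Delta, A] = -2p + 2p = 0 .
\end{equation*}
Under the exterior dynamics~\eqref{eqn:ext-Schro}, the Heisenberg derivative picks up only the potential term:
\begin{equation*}
    D_H A = i[\Vext, x - 2pt] = -2t\, i[\Vext, p] = -2t\,\nabla_x \Vext(x,t).
\end{equation*}
So, writing $u(t)$ for the solution and $M(t) = \lVert A(t) u(t) \rVert_{L^2}$, we have
\begin{equation*}
    \frac{d}{dt} A(t)u(t) = U\text{-covariant term} - 2t \nabla_x\Vext(x,t) u(t).
\end{equation*}
More precisely, $A(t)u(t) = U_\ext(t,1)A(1)u(1) - 2\int_1^t U_\ext(t,s)\, s\nabla_x \Vext(x,s) u(s)\,ds$ (Duhamel for the commuted equation). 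Using unitarity, this gives
\begin{equation*}
    M(t) \le M(1) + 2\int_1^t s \lVert \nabla_x \Vext(\cdot,s) u(s)\rVert_{L^2}\,ds .
\end{equation*}
The crude bound $|\nabla_x\Vext|\lesssim s^{-(1+\mu)\beta}$ from the support of $F(|x|/s^\beta\ge 1)$ and~\eqref{eqn:V-cond} gives only $M(t)\lesssim t^{2-(1+\mu)\beta}$. This is exactly the threshold appearing in the statement (the condition $\rho > 2-\beta(1+\mu)$). So I expect the proof to be essentially this one-line Duhamel/Gronwall-type estimate, with $\rho$ taken strictly larger only to absorb logarithms or the endpoint. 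Note $2-(1+\mu)\beta<1/2$ is guaranteed by $\beta>3/(2+2\mu)$.

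The steps, in order, are as follows.

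\begin{enumerate}
    \item Justify the commutator computation. The domain issue is that $A(1)u(1)\in L^2$ must propagate. I would regularize by replacing $A$ with $A\,\jBra{\delta A}^{-1}$, or work with $|A|^2$ truncated, and pass $\delta\to 0$ by monotone convergence. For this I will use that $\nabla_x\Vext$ and $D_x^2\Vext$ are bounded (by~\eqref{eqn:V-cond}), so that commutators of $\Vext$ with $\jBra{\delta A}^{-1}$ are controlled uniformly in $\delta$.
    \item Compute $\frac{d}{dt}\lVert A_\delta u\rVert^2 = 2\,\mathrm{Re}\langle A_\delta u, [\partial_t + i(-\Delta+\Vext), A_\delta] u\rangle$. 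Reduce this to the term $-2t\nabla\Vext$ plus errors vanishing as $\delta\to0$.
    \item Obtain $\frac{d}{dt}M\le 2t\lVert\nabla\Vext\rVert_{L^\infty}\lesssim t^{1-(1+\mu)\beta}$. Integrating from $1$ to $t$ gives $M(t)\lesssim M(1)+t^{2-(1+\mu)\beta}$, or $\log t$ in the borderline case, which in either case is $\lesssim \jBra{t}^\rho$.
\end{enumerate}

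The main obstacle is step 1, the rigorous regularization. The difficulty is that $[\Delta, A_\delta]$ does not vanish exactly once $A$ is cut off: the free conservation law holds for $A$ itself but not for the truncated operator. If this becomes messy, I would instead conjugate by the free Galilean structure. Since $x-2pt = e^{-it\Delta}\,x\,e^{it\Delta}$ (up to sign conventions), we have $\lVert A(t)u(t)\rVert = \lVert x\, v(t)\rVert$ with $v(t)=e^{-it\Delta}u(t)$ satisfying $i\partial_t v = e^{-it\Delta}\Vext e^{it\Delta}v$. Then I would regularize $x$ by $x\jBra{\delta x}^{-1}$, whose commutator with the conjugated potential $e^{-it\Delta}\Vext e^{it\Delta}$ equals $e^{-it\Delta}[\Vext, A_\delta(t)]e^{it\Delta}$. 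This commutator is bounded by $t\lVert\nabla\Vext\rVert_\infty$ uniformly in $\delta$, since $A_\delta$ is a smooth bounded function of the commuting components of $x-2pt$, and the commutator with a function of $p$ is handled by a standard symbol estimate. With this uniform bound, the limit $\delta\to 0$ goes through by monotone convergence.
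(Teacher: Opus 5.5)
Your argument is correct. It uses the same two ingredients as the paper: $x-2pt$ is conserved by the free flow, and $i[\Vext, x-2pt]=2t\nabla_x\Vext$ (your sign is reversed, which is harmless), with $\lVert\nabla_x\Vext(\cdot,t)\rVert_{L^\infty}\lesssim t^{-(1+\mu)\beta}$. The difference is in how the source term is handled.

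\textbf{The paper's route.} The paper runs a propagation estimate for the quadratic, time-weighted observable $|x-2pt|^2/t^{2\rho}$. The weight produces the good-sign term $-2\rho\langle |x-2pt|^2t^{-2\rho-1}\rangle_t$. The cross term $t^{1-2\rho}\langle (x-2pt)\cdot\nabla_x\Vext+\nabla_x\Vext\cdot(x-2pt)\rangle_t$ is split by Cauchy's inequality into a part absorbed by that good term and a remainder $O(t^{3-2\rho-2(1+\mu)\beta})$. The remainder is integrable exactly when $\rho>2-(1+\mu)\beta$, which is where the strict inequality comes from.

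\textbf{Your route.} Your linear Duhamel bound is
$\lVert (x-2pt)u(t)\rVert\le\lVert (x-2p)u(1)\rVert+2\int_1^t s\lVert\nabla_x\Vext(\cdot,s)\rVert_{L^\infty}\lVert u(1)\rVert_{L^2}\,ds$,
and it needs no absorption. Since $(1+\mu)\beta<2$, the logarithmic case you mention never occurs. The bound gives $\lVert (x-2pt)u(t)\rVert\lesssim 1+t^{2-(1+\mu)\beta}$, and the constant is absorbed because $\rho>0$. So you are not getting ``only'' the threshold bound: you prove the proposition and also the endpoint $\rho=2-(1+\mu)\beta$. That endpoint matches the heuristic size of the Dollard correction in Remark~\ref{remark:rho-bound}.

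\textbf{What each buys.} The paper's squared formulation matches its other propagation estimates and serves directly as the template for the induction on $|x-2pt|^{2n}$ in Proposition~\ref{prop:ESG-HO}. Your approach also extends to higher moments. However, you would have to expand $[\Vext,(x-2pt)^{a}]$ by hand and bound the resulting terms $t^kD_x^k\Vext$ acting on lower moments.

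\textbf{Rigor.} The paper computes formally, so your regularization goes beyond it, and it is routine. With the paper's conventions $x-2pt=e^{it\Delta}xe^{-it\Delta}$ (not $e^{-it\Delta}xe^{it\Delta}$), your $v=e^{-it\Delta}u$ is the right variable. Set $W(t)=e^{-it\Delta}\Vext e^{it\Delta}$; then $[x_j,W(t)]=e^{-it\Delta}[x_j-2tp_j,\Vext]e^{it\Delta}=2it\,e^{-it\Delta}(\partial_j\Vext)e^{it\Delta}$, which is bounded by $2t\lVert\nabla_x\Vext\rVert_{L^\infty}$.

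Take a smooth regularization $g_\delta(x)=\delta^{-1}g(\delta x)$ of $x_j$, for example $g(y)=y_je^{-|y|^2}$. Writing $g_\delta$ as a superposition of $e^{ik\cdot x}$ gives $\lVert[g_\delta(x),W(t)]\rVert\lesssim\int|k||\hat g(k)|\,dk\cdot t\lVert\nabla_x\Vext\rVert_{L^\infty}$, uniformly in $\delta$ by scaling. The bounded-operator Duhamel formula for $g_\delta(x)v$ then gives your inequality with a harmless constant. Monotone convergence removes $\delta$, since $|g_\delta(x)|\uparrow|x_j|$. No symbol calculus is needed.
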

\begin{proof}
    We will obtain~\eqref{eqn:error-slow-growth} by performing a propagation estimate against the observable 
    $$A(t) = \frac{|x-2pt|^2}{t^{2\rho}}$$
    Taking the Heisenberg derivative and observing that
    \begin{equation*}
        \frac{d}{dt} |x-2pt|^2 + [-i\Delta, |x-2pt|^2] = 0
    \end{equation*}
    we find that
    \begin{subequations}\begin{align}
        \frac{d}{dt} \left\langle \frac{|x-2pt|^2}{t^{2\rho}}\right\rangle_t =& -2\rho\left\langle \frac{|x-2pt|^2}{t^{2\rho+1}} \right\rangle \label{eqn:ESG-good-sign}\\
        &+ \left\langle \left[i\Vext, \frac{|x-2pt|^2}{t^{2\rho}}\right]\right\rangle_t \label{eqn:ESG-pot-comm}
    \end{align}
    \end{subequations}
    The first term~\eqref{eqn:ESG-good-sign} has a favorable sign.  For the second term, we see that
    \begin{equation*}
        |x-2pt|^2 = x^2 -2pt\cdot x -2x \cdot pt + 4p^2t^2
    \end{equation*}
    so
    \begin{equation*}\begin{split}
        [i\Vext, |x-2pt|^2] =& 4t x \cdot \nabla_x \Vext - 4pt^2 \cdot \nabla_x \Vext - 4t^2 \nabla_x \Vext \cdot p\\
        =& 2t(x-2pt) \cdot \nabla_x \Vext + 2t \nabla_x \Vext \cdot (x-2pt)
    \end{split}\end{equation*}
    In particular, using Cauchy-Schwarz and Cauchy's inequalities yields that
    \begin{equation*}
        |\eqref{eqn:ESG-pot-comm}| \leq \frac{2 \rho}{t^{2\rho+1}} \lVert |x-2pt| u(x,t) \rVert_{L^2_x}^2 + C_\rho t^{3-2\rho} \lVert \nabla_x \Vext(x,t) u(x,t) \rVert_{L^2_x}^2
    \end{equation*}
    The first term can be controlled using the good-sign term~\eqref{eqn:ESG-good-sign}, while the second term is perturbative, having size $O(t^{3-2\rho -2\beta(1+\mu)})$.  Based on our assumption that $\rho > 2 - (1+\mu)\beta$, we see that $3-2\rho - 2\beta(1+\mu) < -1$, and the last error term is integrable.  Integrating now gives that
    \begin{equation*}
        \left \langle \frac{|x-2pt|^2}{t^{2\rho}} \right\rangle_t - \langle |x-2p|^2 \rangle_1 \lesssim 1
    \end{equation*}
    which implies~\eqref{eqn:error-slow-growth}.
\end{proof}

\begin{remark}\label{remark:rho-bound}
    The lower bound $\rho > 2 - (1+\mu)\beta$ in the hypothesis for~\Cref{prop:ESG} is natural: The Dollard correction to the free trajectory is
    \begin{equation*}
        \int_1^t \int_s^\infty \nabla_x\Vext(2p\tau, \tau)\;d\tau ds
    \end{equation*}
    which has size $O(t^{2-(1+\mu)\beta})$.
\end{remark}

A similar result also holds for higher moments:

\begin{proposition}\label[proposition]{prop:ESG-HO}
    Let $n \geq 1$.  If $u$ solves~\eqref{eqn:ext-Schro} and $|x - 2p|^n u(x,1) \in L^2_x$, then
    \begin{equation}\label{eqn:error-slow-growth-HO}
        \lVert |x - 2pt|^n u(x,t) \rVert_{L^2_x} \lesssim_{\rho, u} \jBra{t}^{n\rho}
    \end{equation}
    for $\rho > 2-(1+\mu)\beta$.
\end{proposition}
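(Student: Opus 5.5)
We argue by induction on $n$, mimicking the proof of~\Cref{prop:ESG} with the observable $A_n(t) = t^{-2n\rho}|x-2pt|^{2n}$. Writing $Y = x - 2pt$ (a vector of commuting self-adjoint operators, $[Y_j,Y_k]=0$), we have $\frac{d}{dt}Y + [-i\Delta, Y] = 0$. Hence $|Y|^{2n}$ is conserved by the free Heisenberg flow, and
\begin{equation*}
    \frac{d}{dt}\langle A_n\rangle_t = -2n\rho\, t^{-1}\langle A_n\rangle_t + t^{-2n\rho}\left\langle [i\Vext, |Y|^{2n}]\right\rangle_t .
\end{equation*}
The first term has a favorable sign, and the whole task is to control the commutator.

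For the commutator, we use $[\Vext, Y_j] = [\Vext, -2p_j t] = -2it\,\partial_j\Vext$, which is a bounded multiplication operator of size $O(t^{1-(1+\mu)\beta})$. Expanding $|Y|^{2n} = (\sum_j Y_j^2)^n$ by the Leibniz rule for commutators, $[i\Vext,|Y|^{2n}]$ becomes a sum of words containing $2n-1$ factors of $Y$ and one factor $t\nabla\Vext$. We then move the $\nabla \Vext$ factor into the middle. Each time it passes a $Y$, this generates a further commutator $[Y_k, t\partial_j\Vext] = 2it^2\partial_k\partial_j\Vext$, of size $t^{2}t^{-(2+\mu)\beta}$, and so on up to at most $2n-1$ derivatives; the regularity assumption~\eqref{eqn:V-cond} up to order $8$ covers the moderate $n$ we need. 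After this, the leading term has the schematic form $t\langle |Y|^{n}u, \nabla\Vext\, Y|Y|^{n-2}\cdots u\rangle$, and we bound it by Cauchy--Schwarz and Young:
\begin{equation*}
    t^{1-2n\rho}\lVert |Y|^n u\rVert\,\lVert \nabla\Vext\rVert_\infty \lVert |Y|^{n-1}u\rVert \le \frac{n\rho}{t^{2n\rho+1}}\lVert |Y|^n u\rVert^2 + C t^{3-2n\rho-2(1+\mu)\beta}\lVert |Y|^{n-1}u\rVert^2 .
\end{equation*}
The first piece is absorbed by the good-sign term. For the second, we invoke the induction hypothesis $\lVert |Y|^{n-1}u\rVert \lesssim t^{(n-1)\rho}$, which needs $|Y|^{n-1}u(1)\in L^2$; this follows from the $n$-th moment assumption by interpolation. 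The resulting error is $O(t^{1-2\rho-2(1+\mu)\beta+2})$, which is integrable precisely when $\rho > 2-(1+\mu)\beta$, as in~\Cref{prop:ESG}.

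The lower-order commutator terms carry $k$ extra factors of $t\nabla$ acting on $\Vext$, each costing $t^{1-\beta}$, and $k$ fewer powers of $Y$, each worth $t^{\rho}$. Since $\rho > 2-(1+\mu)\beta > 1-\beta$, each such trade is favorable, so these terms are bounded by the same integrable error after another application of Cauchy--Schwarz and the induction hypothesis. Integrating from $1$ to $t$ then gives $\langle A_n\rangle_t \lesssim 1$, which is~\eqref{eqn:error-slow-growth-HO}.

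The main obstacle is justifying the computation rigorously: $|Y|^{2n}u(t)$ need not a priori be in $L^2$. We would handle this by regularizing, replacing $|Y|^{2n}$ with $|Y|^{2n}(1+\delta|Y|^2)^{-n}$, and checking that all the bounds above are uniform in $\delta$. We also need to verify that the commutator bookkeeping in the expansion produces no term with more than $n$ factors of $Y$ on either side.
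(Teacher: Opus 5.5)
Your proposal is correct and follows essentially the same route as the paper: strong induction on $n$ with the observable $t^{-2n\rho}|x-2pt|^{2n}$, and commuting factors of $x-2pt$ across $\nabla\Vext$ until at most $n$ sit on each side. The main term is then handled by Cauchy--Schwarz and Young together with the inductive bound on $\lVert |x-2pt|^{n-1}u\rVert_{L^2_x}$, and the remainder terms by the gain $t^{1-\beta-\rho}$ per extra commutator (using $\rho > 1-\beta$). As minor refinements, this commuting procedure produces at most $n$ derivatives of $\Vext$ (not $2n-1$), and your regularization of $|x-2pt|^{2n}$ addresses an a priori finiteness issue that the paper's proof leaves implicit.
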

\begin{proof}
    We will proceed by (strong) induction on $n$.  The base case was established in~\Cref{prop:ESG}, so we can assume that $n > 1$.  Taking the Heisenberg derivative of the observable $A(t) = \frac{|x-2pt|^{2n}}{t^{2n\rho}}$, we find that
    \begin{equation*}\begin{split}
        \frac{d}{dt} \left\langle \frac{|x-2pt|^{2n}}{t^{2n\rho}} \right\rangle_t = -2n \left\langle \frac{|x-2pt|^{2n}}{t^{2n\rho+1}} \right\rangle_t + \left\langle \left[i\Vext, \frac{|x-2pt|^{2n}}{t^{2n\rho}}\right] \right\rangle_t
    \end{split}\end{equation*}
    As before, the first term has a good sign, and will be used to compensate certain unacceptably large contributions from the second term, which is sign-indefinite.  Turning to the second term, we see that
    \begin{equation*}\begin{split}
        [i\Vext, |x-2pt|^{2n}] =& \sum_{j=0}^{n-1} |x-2pt|^{2j} [i\Vext, |x-2pt|^2] |x-2pt|^{2(n-j-1)}\\
        =& 2t\sum_{j=0}^{n-1} |x-2pt|^{2j} \left( (x-2pt) \cdot \nabla_x \Vext + \nabla_x \Vext \cdot (x-2pt)\right) |x-2pt|^{2(n-j-1)}
    \end{split}\end{equation*}
    By repeatedly commuting $|x-2pt|^2$ weights from the side with more than $n$ weights, we can rewrite this expression as
    \begin{equation}\label{eqn:Vext-pseudoconf-comm}\begin{split}
        [\Vext, |x-2pt|^{2n}] =& 2nt |x-2pt|^{m} (x-2pt) \cdot \nabla_x \Vext |x-2pt|^{2n-m-2}\\
        &+|x-2pt|^{2n-m-2}  \nabla_x \Vext \cdot (x-2pt) |x-2pt|^{m}\\
        &+ R_n
    \end{split}\end{equation}
    where
    \begin{equation*}
        m = \begin{cases}
            n-1 & n \text{ odd}\\
            n-2 & n \text{ even}
        \end{cases}
    \end{equation*}
    and $R_n$ is a remainder involving higher order commutator terms.  By repeatedly applying this symmetrization argument on the commutator, we see that we can  write $R_n$ as a sum of terms of the form
    \begin{equation*}
        C_{a,b,c,d} t^{b+c} |x-2pt|^{2a} (x-2pt)^{\otimes b} \cdot D_x^{b+c} \Vext \cdot (x-2pt)^{\otimes c} |x-2pt|^{2d}
    \end{equation*}
    where
    \begin{equation}\label{eqn:S-a-b-c-d-n}
        a+b+c+d = n,\qquad b+c > 1, \qquad 2a+b = n \text{ or } c + 2d = n
    \end{equation}
    We now estimate the terms arising in~\eqref{eqn:Vext-pseudoconf-comm}.  For the main term, we can use the inductive hypothesis to estimate
    \begin{equation}\label{eqn:ESG-HO-main}\begin{split}
        \left\langle\frac{[\Vext, |x-2pt|^{2n}] - R_n}{t^{2n\rho}}\right\rangle_t \lesssim_n& t^{1-2n\rho} \lVert \nabla_x \Vext \rVert_{L^\infty_x} \lVert |x-2pt|^n u \rVert_{L^2_x}  \lVert  |x-2pt|^{n-1} u \rVert_{L^2_x}\\
        \lesssim_{n,u}& t^{1-2\rho -\beta(1+\mu)}\lVert |x-2pt|^n u \rVert_{L^2_x}\\
        \leq& \epsilon \frac{\lVert |x-2pt|^n u \rVert_{L^2_x}^2}{t^{2n\rho+1}} +  C_{n,u}\epsilon^{-1} t^{3 - 2\rho - 2 \beta(1+\mu)}
    \end{split}\end{equation}
Since $\rho > 2 - (1+\mu)\beta$, this term is integrable in time.  Similarly, $R_n$ can be estimated as
    \begin{equation}\label{eqn:ESG-HO-rem}\begin{split}
        \frac{|\langle R_n \rangle_t|}{t^{2n\rho}} \lesssim&{}_n t^{-2n\rho} \sum_{(a,b,c,d) \in \mathcal{I}(n)}t^{b+c}\lVert D_x^{b+c} \Vext \rVert_{L^\infty} \lVert |x-2pt|^{n} u \rVert_{L^2_x} \lVert |x-2pt|^{c+2d} u \rVert_{L^2_x}\\
        &+ \{\textup{similar terms}\}\\
        \lesssim&{}_{u,\rho,n} \sum_{(a,b,c,d) \in \mathcal{I}(n)}t^{(b+c)(1-\rho-\beta)}t^{-n\rho}\lVert |x-2pt|^{n} u \rVert_{L^2_x} t^{-\mu\beta} + \{\textup{similar terms}\} \\
        \leq& \epsilon \frac{\lVert |x-2pt|^n u \rVert_{L^2_x}^2}{t^{2n\rho+1}} + C_{u,\rho,n} \epsilon^{-1} \sum_{(a,b,c,d) \in \mathcal{I}(n)}t^{2(b+c)(1-\rho -\beta)} t^{1-2\mu\beta}
    \end{split}
    \end{equation}
    with $\mathcal{I}(n)$ the set of all $(a,b,c,d)$ satisfying~\eqref{eqn:S-a-b-c-d-n} with $2a + b = n$, and $\{\textup{similar terms}\}$ denotes an analogous sum over the terms where $c + 2d = n$. Here we also used $b+2c+2d=n$ when $2a+b=n$. Now, for $\beta < 1 \leq \frac{1}{\mu}$, the hypothesis $\rho > 2 - (1+\mu)\beta$ implies that $\rho > 1 - \beta$, so~\eqref{eqn:ESG-HO-rem} is lower order.  In particular, by choosing $\epsilon$ sufficiently small in~\eqref{eqn:ESG-HO-main} and~\eqref{eqn:ESG-HO-rem}, we find that
    \begin{equation*}
        \left\langle \frac{|x-2pt|^{2n}}{t^{2n\rho}} \right\rangle_t - \left\langle |x-2p|^{2n} \right\rangle_1 \lesssim_{n,\rho,u} 1
    \end{equation*}
    as required.
\end{proof}

\subsubsection{Cook's method set-up}

With~\Cref{prop:ESG,prop:ESG-HO} in hand, we are now prepared to prove the existence of
\begin{equation*}
    \Omega_{D,\ext} = \slim_{t \to \infty} U_D(0,t) F\left(4t^{1-\beta} |D| \geq 1\right) U_\ext(t,0)
\end{equation*}
By Cook's method and the support properties of $\Vext$, it suffices to prove that the integral
\begin{equation*}
    \int_1^\infty U_D(0,t)F\left(4t^{1-\beta} |D| \geq 1\right)\left(\Vext(2pt,t) - \Vext(x,t)\right) U_\ext(t,0) \phi \;dt
\end{equation*}
converges in $L^2$ for all $\phi$ in some dense subset of $L^2$.  Specifically, defining $\psi(t) = U_\ext(t,0) \phi$, we will assume that
\begin{equation*}
    \lVert |x-2p|^n \psi(x,1) \rVert_{L^2_x} < \infty,\qquad n = 1,2,\cdots, N
\end{equation*}
for some large but fixed $N$.  \Cref{prop:ESG} then guarantees that
\begin{equation}\label{eqn:psi-moment-cond}
    \lVert |x-2pt|^n \psi(x,t) \rVert_{L^2_x} \lesssim_\phi \jBra{t}^{n\rho}
\end{equation}
for $n = 1,2,\cdots, N$, where $\rho \in (1-\beta, 1/2)$ (c.f.~\Cref{tab:constants}).  To take advantage of~\eqref{eqn:psi-moment-cond}, we will write
\begin{equation*}
    f(x,t) = e^{-it\Delta}\psi(x,t)
\end{equation*}
since then~\eqref{eqn:psi-moment-cond} reduces to
\begin{equation}\label{eqn:f-moment-cond}
    \lVert \jBra{x}^n f(x,t) \rVert_{L^2_x} \lesssim_\phi \jBra{t}^{n\rho}
\end{equation}
and we can write
\begin{equation}\label{eqn:V-diff-f}\begin{split}
    (\Vext(2pt,t) - \Vext(x,t)) \psi    =& (\Vext(2pt,t) - \Vext(x,t)) e^{it\Delta} f\\
                                        =& \frac{1}{(2\pi)^{3/2}} \int (\Vext(2\xi t, t) - \Vext(x,t)) e^{ix\cdot \xi - it\xi^2} \hat{f}\;d\xi
\end{split}\end{equation}
This appears similar to our starting point in~\Cref{sec:exterior-bounds}.  However, the bounds we obtain for $\hat{f}(\cdot,t)$ in $L^\infty_\xi$ grow as $t \to \infty$, so we cannot simply mimic the argument we used to construct the wave operator.  Instead,  we use a higher-order Taylor expansion to write
\begin{equation*}\begin{split}
    \Vext(2\xi t, t) - \Vext(x,t) =& \nabla_x \Vext(x,t) \cdot (2\xi t- x) + \frac{1}{2} (2\xi t - x) \cdot D^2_x \Vext(x,t) (2\xi t-x)\\
    &+ (2\xi t- x)^{\otimes 3} \cdot \CVext^3(x,\xi,t)
\end{split}\end{equation*}
where
\begin{equation}\label{eqn:CVext-3-def}
    \CVext^3(x,\xi,t) = \int_0^1 \int_0^{\lambda_1} \int_0^{\lambda_2} D_x^3 \Vext(\lambda x + 2(1-\lambda)t\xi) \;d\lambda d\lambda_2 d\lambda_1
\end{equation}
Substituting this into~\eqref{eqn:V-diff-f}, we find that
\begin{equation*}\begin{split}
    (\Vext(2pt,t) - \Vext(x,t)) \psi =& \frac{i\nabla_x \Vext(x,t)}{(2\pi)^{3/2}}\cdot  \int  \nabla_\xi e^{ix\cdot \xi - it\xi^2} \hat{f}\;d\xi - \frac{D^2_x \Vext(x,t)}{(2\pi)^{3/2}}\cdot  \int  D^2_\xi e^{ix\cdot \xi - it\xi^2} \hat{f}\;d\xi\\
    &- \frac{i}{(2\pi)^{3/2}} \int  D_\xi^3 e^{ix\cdot \xi - it\xi^2} \cdot \CVext^3(x,\xi,t) \hat{f}\;d\xi
\end{split}
\end{equation*}
Both terms on the first line can be controlled in $L^2_x$ using Plancherel:
\begin{equation}\label{eqn:HO-Dollard-lin-term}\begin{split}
    \left\lVert \nabla_x \Vext(x,t) \cdot  \int  \nabla_\xi e^{ix\cdot \xi - it\xi^2} \hat{f}\;d\xi\right\rVert_{L^2_x} =& \left\lVert \nabla_x \Vext(x,t) \cdot  \int  e^{ix\cdot \xi - it\xi^2} \nabla_\xi \hat{f}\;d\xi\right\rVert_{L^2_x}\\
    \leq& \lVert \nabla_x \Vext(x,t) \rVert_{L^\infty_x} \lVert x f \rVert_{L^2}\\
    \lesssim& t^{\rho -(1+\mu)\beta} 
\end{split}\end{equation}
and
\begin{equation}\label{eqn:HO-Dollard-quad-term}\begin{split}
    \left\lVert D^2_x \Vext(x,t) \cdot  \int  D^2_\xi e^{ix\cdot \xi - it\xi^2} \hat{f}\;d\xi\right\rVert_{L^2_x} =& \left\lVert D^2_x \Vext(x,t) \cdot  \int  e^{ix\cdot \xi - it\xi^2} D^2_\xi \hat{f}\;d\xi\right\rVert_{L^2_x}\\
    \leq& \lVert D^2_x \Vext(x,t) \rVert_{L^\infty_x} \lVert |x|^2 f \rVert_{L^2}\\
    \lesssim& t^{2\rho -(2+\mu)\beta} 
\end{split}\end{equation}
Based on our assumptions that $\rho < 1/2$, $\beta > \frac{3}{2(1+\mu)}$, and $\mu \in (1/2, 1]$, both of these terms are integrable, so it only remains to prove that the last term is integrable.  Here, integration by parts yields
\begin{equation*}\begin{split}
    \int D_\xi^3 e^{ix\cdot \xi - it\xi^2} \cdot \CVext^3(x,\xi,t) \hat{f}\;d\xi =& \int e^{ix\cdot \xi - it\xi^2} D_\xi^3 \cdot \left(\CVext^3(x,\xi,t) \hat{f}\right) \;d\xi\\
    =& \int e^{ix\cdot \xi - it\xi^2} F(t^{1/2}|\xi -\xi_\textup{stat}| \leq 1) D_\xi^3 \cdot \left(\CVext^3(x,\xi,t) \hat{f}\right) \;d\xi\\
    &+ \int e^{ix\cdot \xi - it\xi^2} F(t^{1/2}|\xi -\xi_\textup{stat}| \geq 1) D_\xi^3 \cdot \left(\CVext^3(x,\xi,t) \hat{f}\right) \;d\xi\\
    =:& \trmIst + \trmInst
\end{split}\end{equation*}
where we have again distinguished the near stationary and non-stationary parts of the integral.  Based on the previous reasoning, the existence of the exterior-to-Dollard wave operator will follow once we prove that $\lVert \trmIst(x,t) \rVert_{L^2_x}$ and $\lVert \trmInst(x,t) \rVert_{L^2_x}$ are integrable in time.

\subsubsection{Bounds for the near stationary term}
For the near-stationary term, we observe that
\begin{equation*}
    \lambda x + 2(1-\lambda)t\xi = x + (\lambda-1)(x-2t\xi)
\end{equation*}
Since $x = 2t \xi_\textup{stat}$, we see that if $|x| \gtrsim t^\beta$, then on the support of $\trmIst$,
\begin{equation*}
    |D^j_\xi \CVext^3(x,\xi,t)| \lesssim \jBra{x}^{-3-\mu}, \qquad 0 \leq j \leq 5
\end{equation*}
while for $|x| \lesssim t^\beta$, 
\begin{equation*}
    |D^j_\xi \CVext^3(x,\xi,t)| \lesssim t^{-(3+\mu)\beta}, \qquad 0 \leq j \leq 5
\end{equation*}
(c.f.~\Cref{lem:CVext-decay-near-stat}).  In particular, we see that
\begin{equation}\label{eqn:trmIst-bound}\begin{split}
    \lVert \trmIst \rVert_{L^2_x} \lesssim&\lVert \max(t^\beta, \jBra{x})^{-3-\mu} \rVert_{L^2_x} \lVert F(t^{1/2}|\xi -\xi_\textup{stat}| \leq 1) \jBra{D_\xi}^3 \hat{f} \rVert_{L^1_\xi} \\    
    \lesssim&t^{-(3/2+\mu)\beta} \lVert F(t^{1/2}|\xi -\xi_\textup{stat}| \leq 1) \jBra{D_\xi}^3 \hat{f} \rVert_{L^1_\xi}\\
    \lesssim& t^{-3/4 - (3/2+\mu) \beta} \lVert  \jBra{x}^3 f \rVert_{L^2}\\
    \lesssim& t^{3\rho - 3/4 - (3/2 + \mu) \beta}
\end{split}\end{equation}
and under our assumptions on $\rho$ and $\beta$, $3\rho - 3/4 - (3/2 + \mu) \beta < -1$.

\subsubsection{Bounds for the non-stationary term}
Turning to the nonstationary term, we first introduce a dyadic decomposition
\begin{equation*}
    \trmInst^j := \int e^{ix\cdot \xi - it\xi^2} \chi_j(t^{1/2}|\xi -\xi_\textup{stat}|) D_\xi^3 \cdot \left(\CVext^3(x,\xi,t) \hat{f}\right) \;d\xi
\end{equation*}
so
\begin{equation*}
    \trmInst = \sum_{j=0}^\infty \trmInst^j
\end{equation*}
Considering one summand, we observe that the differential operator
\begin{equation*}
    \mathcal{L}_0 = \frac{(x - 2t\xi)}{i|x-2t\xi|^2} \cdot \nabla_\xi
\end{equation*}
satisfies
\begin{equation*}
    \mathcal{L}_0 e^{ix\cdot\xi - it\xi^2} = e^{ix\cdot\xi - it\xi^2}
\end{equation*}
so integrating by parts twice gives
\begin{equation*}\begin{split}
    \trmInst^j = \int e^{ix\cdot\xi -it\xi^2} \bigl(\mathcal{L}_0^*\bigr)^2 \left(\chi_j(t^{1/2}|\xi -\xi_\textup{stat}|) D_\xi^3 \cdot \left(\CVext^3(x,\xi,t) \hat{f}\right)\right)\;d\xi
\end{split}\end{equation*}
By modifying the argument for~\Cref{lem:CVext-nonstat-lemma}, we obtain the following result on $\CVext^3$:
\begin{lemma}\label[lemma]{lem:CVext-3-exterior-bound}
    Let $\xi \in \supp \chi_j(t^{1/2} |\xi - \xi_\textup{stat}|)$.  If $|x| > 2^{j+10}t^{1/2}$, then for $0 \leq a \leq 5$:
    \begin{equation}\label{eqn:CVext-3-far-field-bound}
        |D_\xi^a \CVext^3(x,\xi,t)| \lesssim \jBra{x}^{-(3+\mu)}
    \end{equation}
    Otherwise, we have the unconditional bound
    \begin{equation}\label{eqn:CVext-3-uncond}
        |D_\xi^a \CVext^3(x,\xi,t)| \lesssim t^{-(3+\mu)\beta}
    \end{equation}
\end{lemma}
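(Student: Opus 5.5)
The plan is to follow the proof of~\Cref{lem:CVext-nonstat-lemma} essentially line by line, replacing $\nabla_x\Vext$ by $D_x^3\Vext$ and using the triple $\lambda$-average in~\eqref{eqn:CVext-3-def} in place of the single one. Write $y_\lambda = \lambda x + 2(1-\lambda)t\xi = x + (\lambda-1)(x-2t\xi)$. Differentiating under the integral sign in~\eqref{eqn:CVext-3-def} gives
\begin{equation*}
    D_\xi^a \CVext^3(x,\xi,t) = \int_0^1\int_0^{\lambda_1}\int_0^{\lambda_2} (2(1-\lambda)t)^{a} D_x^{3+a}\Vext(y_\lambda,t)\;d\lambda\, d\lambda_2\, d\lambda_1 .
\end{equation*}
The iterated $\lambda$-integrals have total measure $1/6$. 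So both claims reduce to pointwise bounds on $t^a |D_x^{3+a}\Vext(y_\lambda,t)|$, uniformly in $\lambda\in[0,1]$ and $0\le a\le 5$. This is where the hypothesis~\eqref{eqn:V-cond} with $a\le 8$ derivatives is used.

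\emph{Far-field bound~\eqref{eqn:CVext-3-far-field-bound}.} On $\supp \chi_j(t^{1/2}|\xi-\xi_\textup{stat}|)$ we have $|x-2t\xi| = 2t|\xi-\xi_\textup{stat}| \le 2^{j+3}t^{1/2}$. Hence if $|x| > 2^{j+10}t^{1/2}$, then $|x| > 2|x-2t\xi|$, which is exactly condition~\eqref{eqn:x-vs-drift-cond}. It follows that $|y_\lambda|\sim|x|$ for every $\lambda\in[0,1]$. The symbol bound~\eqref{eqn:V-cond}, together with the fact that the cutoff $F(|x|/t^\beta\ge1)$ only improves the estimates, since each derivative falling on it costs $t^{-\beta}\lesssim\jBra{x}^{-1}$ on its support, then gives $|D_x^{3+a}\Vext(y_\lambda,t)|\lesssim \jBra{x}^{-3-a-\mu}$. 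The $a=0$ case is then immediate. For $a\ge1$, I would absorb the factor $t^a$ exactly as in the last line of the proof of~\Cref{lem:CVext-decay-near-stat}, namely by $t^a\jBra{x}^{-a}\lesssim 1$. This requires $|x|\gtrsim t$ in the relevant region. I would obtain that, as before, by restricting to the dense class used in the Cook argument and noting that the relevant frequencies satisfy $|\xi|\gtrsim 1$, so that $2t|\xi|\gtrsim t$ and $|x|\ge 2t|\xi|-|x-2t\xi|\gtrsim t$.

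\emph{Unconditional bound~\eqref{eqn:CVext-3-uncond}.} Here no geometry is needed. By the definition~\eqref{eqn:V-ext-def}, $\Vext(\cdot,t)$ is supported in $|y|\ge t^\beta$, and every derivative, whether it hits $V$ or the cutoff, gains at least $t^{-\beta}$. This yields $\sup_y|D^{3+a}_x\Vext(y,t)|\lesssim t^{-(3+a+\mu)\beta}$, which is already the stated bound when $a=0$. For $a\ge1$ the extra $t^{a}t^{-a\beta}$ must again be controlled by the same mechanism as in~\eqref{eqn:CVext-decay-non-stat-above}. I would therefore track where $(1-\lambda)t$ is large: on that set $|y_\lambda - x|$ is comparable to $(1-\lambda)|x-2t\xi|$, and $y_\lambda$ is pushed toward $2t\xi$, which has size $\gtrsim t$, so that $|y_\lambda|\gtrsim t$ and the $t^a$ is paid for by the extra decay $\jBra{y_\lambda}^{-a}$.

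The main obstacle is precisely this absorption of the $t^a$ produced by $\xi$-derivatives for $1\le a\le5$. In the far-field case it rests on the lower bound $|x|\gtrsim t$, obtained from frequency localization away from $0$. In the unconditional case it requires splitting the $\lambda$-integral according to whether $(1-\lambda)t\lesssim t^\beta$, where the $t^a$ loss is at most $t^{a\beta}$ and is compensated, or whether $(1-\lambda)t\gtrsim t^\beta$, where $|y_\lambda|\gtrsim t$ should hold. I would attempt this split first, and fall back on the dense-class frequency localization if the geometric argument degenerates near $\xi\approx0$.
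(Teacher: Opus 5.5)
Your argument for $a=0$ is correct and follows the same outline as the paper's. On $\supp\chi_j(t^{1/2}|\cdot-\xi_\textup{stat}|)$ one has $|x-2t\xi|\le 2^{j+3}t^{1/2}$, so $|x|>2^{j+10}t^{1/2}$ forces $|y_\lambda|\sim|x|$ along the whole segment. The unconditional bound for $a=0$ is just $\sup_y|D_x^3\Vext(y,t)|\lesssim t^{-(3+\mu)\beta}$.

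The gap is in the cases $1\le a\le 5$, exactly at the step you flag, and neither of your absorption mechanisms is available.
\begin{itemize}
\item There is no frequency localization $|\xi|\gtrsim 1$. The lemma is stated for every $\xi\in\supp\chi_j$, and $\CVext^3$ is only used in the Cook argument for $\Omega_{D,\ext}$. There the amplitude is $\hat f(\cdot,t)$ with $f=e^{-it\Delta}U_\ext(t,0)\phi$. That dense class is defined by moment conditions, and $U_\ext(t,0)$ does not preserve Fourier support. The support condition \eqref{eqn:phi-supp-hypo} belongs to \Cref{sec:Dollard-WO}, where the relevant propagator $U_D$ is a Fourier multiplier.
\item Even granting localization, the far-field region contains $j=0$ with $|x|\sim t^\beta$, where necessarily $|\xi|\approx|x|/2t\sim t^{\beta-1}$.
\item In the unconditional case, the claim that $|y_\lambda|\gtrsim t$ whenever $(1-\lambda)t\gtrsim t^\beta$ is false. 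The point $y_\lambda$ lies on the segment from $x$ to $2t\xi$, and when $|x|\le 2^{j+10}t^{1/2}$ both endpoints can have size $\sim t^\beta$. The other half of your split, $(1-\lambda)t\lesssim t^\beta$, is fine.
\end{itemize}

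This cannot be repaired, because the stated bounds fail for $a\ge 1$. Take $V=\jBra{x}^{-\mu}$, so that $\partial_1^4V(re_1)\approx\mu(\mu+1)(\mu+2)(\mu+3)r^{-4-\mu}>0$ for large $r$. Look at the component $\partial_{\xi_1}(\CVext^3)_{111}=\int_0^1\tfrac{(1-\lambda)^2}{2}\,2(1-\lambda)t\,(\partial_1^4\Vext)(y_\lambda,t)\,d\lambda$.
\begin{itemize}
\item Far field. Let $x=3t^\beta e_1$ and $2t\xi=x+4t^{1/2}e_1$. Then $j=0$, and $|x|>2^{10}t^{1/2}$ for large $t$ since $\beta>1/2$. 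The segment stays in $\{|y|\ge 3t^\beta\}$, where $\Vext=V$. The integral is $\sim t|x|^{-4-\mu}\sim t^{1-\beta}\jBra{x}^{-(3+\mu)}$, contradicting \eqref{eqn:CVext-3-far-field-bound}.
\item Unconditional case. Let $x=3t^\beta e_1$ and $2t\xi=6t^\beta e_1$, with $j$ chosen so that $2^j\le\tfrac32t^{\beta-1/2}\le 2^{j+2}$; then $|x|\le 2^{j+10}t^{1/2}$. The integral is $\sim t^{1-\beta}\,t^{-(3+\mu)\beta}$, contradicting \eqref{eqn:CVext-3-uncond}.
\end{itemize}
Using $\partial_{\xi_1}^a$ in the same examples gives a loss of $t^{a(1-\beta)}$ for each $a\ge1$.

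What is true is $|D_\xi^a\CVext^3|\lesssim t^a\jBra{x}^{-(3+a+\mu)}$ in the far field, and $|D_\xi^a\CVext^3|\lesssim t^{a(1-\beta)}t^{-(3+\mu)\beta}$ in general. The far-field bound as stated holds for all $a$ only under an extra hypothesis $|\xi|>\epsilon$, which gives $|x|\gtrsim_\epsilon t$ as in \Cref{lem:CVext-nonstat-lemma}.

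The paper's own proof (``from the definition'', ``arguing as in \Cref{lem:CVext-nonstat-lemma}'') passes over the factor $(2(1-\lambda)t)^a$ in the same way. The earlier lemma's derivative absorption relied on $|\xi|>\epsilon$, which is absent here. So your proposal proves the lemma only for $a=0$. The remaining cases require weakening the statement, and with it the later remark that the size of $\CVext^3$ is unaffected by differentiation; a better argument will not suffice.
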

\begin{proof}
    The unconditional bound~\eqref{eqn:CVext-3-uncond} follows from the definition of $\CVext$, while the bound~\eqref{eqn:CVext-3-far-field-bound} follows from observing that
    \begin{equation*}
        |x - 2t\xi| = 2t|\xi - \xi_\textup{stat}| \lesssim 2^j t^{1/2}
    \end{equation*}
    and arguing as in the proof of~\Cref{lem:CVext-nonstat-lemma}.
\end{proof}
Now, in the expression
\begin{equation*}
    \bigl(\mathcal{L}_0^*\bigr)^2 \left(\chi_j(t^{1/2}|\xi -\xi_\textup{stat}|) D_\xi^3 \cdot \left(\CVext^3(x,\xi,t) \hat{f}\right)\right)
\end{equation*}
each derivative produces a different contribution depending on which function it acts on:
\begin{itemize}
    \item A derivative that hits $\hat{f}$ will turn into a space weight (and thus contribute a factor of $t^\rho$ in the estimate),
    \item Any derivative that hits $\chi_j(t^{1/2}|\xi -\xi_\textup{stat}|)$ or a $\frac{(x - 2t\xi)}{|x-2t\xi|^2}$ factor will contribute a factor of $O(t^{1/2} 2^{-j})$,
    \item The size of the term $\CVext^3$ is unaffected by differentiation.
\end{itemize}
Taking these considerations into account, we see that for $|x| > 2^{j+10}t^{1/2}$:
\begin{equation*}\begin{split}
    |\trmInst^j(x,t)| \lesssim& 2^{-j/2}t^{-7/4} \max(\jBra{x}, t^\beta)^{-(3+\mu)} \left\lVert \jBra{x + t^{1/2} 2^{-j}}^2\jBra{x}^3 f \right\rVert_{L^2_x}
\end{split}\end{equation*}
In particular, taking the $L^2$ integral in $x$ gives that
\begin{equation}\label{eqn:trmInst-ext}
    \lVert \trmInst^j(x,t) \rVert_{L^2(|x| \geq 2^{j+10}t^{1/2})} \lesssim 2^{-j/2} t^{-7/4} t^{-(3/2+\mu)\beta} \left(2^{-2j} t + t^{2\rho}\right)t^{3\rho}
\end{equation}
which is integrable in time after summing over $j \geq 0$, as required.  Thus, it only remains to consider the contribution from the region $|x| \leq 2^{j+10}t^{1/2}$.  The difficulty here is that the unconditional bound~\eqref{eqn:CVext-3-uncond} does not produce bounds that can be summed in $j$.  To overcome this, we need additional refinements to~\Cref{lem:CVext-3-exterior-bound}:
\begin{lemma}\label[lemma]{lem:CVext-3-exterior-bound-refined}
    Suppose that $|x| \leq t^{1/2} 2^{j+10}$ and $t^{-1/2} 2^j \leq |\xi - \xi_\textup{stat}| \leq t^{-1/2} 2^{j+2}$.  Then, we have the refined unconditional bound
    \begin{equation}\label{eqn:CVext-3-uncond-refined}
        |D_\xi^a \CVext^3(x,\xi,t)| \lesssim t^{-(2+\mu)\beta-1/2} 2^{-j}
    \end{equation}
    for $a \in [0,5]$.
    
    Moreover, if
    \begin{equation*}
        \theta = \arccos \frac{x \cdot (2t\xi - x)}{|x||x-2t\xi|}
    \end{equation*}
    (see~\Cref{fig:CVext3-ref-diagram-1}) then we have the refined angular bound
    \begin{equation}\label{eqn:CVext-3-angular-refined}
        |D_\xi^a \CVext^3(x,\xi,t)| \lesssim (|x|\sin \theta)^{-(2+\mu)} t^{-1/2} 2^{-j}
    \end{equation}
    for $a \in [0,5]$.
\end{lemma}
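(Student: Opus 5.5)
Both bounds come from reading $\CVext^3$ as an average of $D_x^3\Vext$ over the segment from $x$ to $2t\xi$. Set $y_\lambda = \lambda x + 2(1-\lambda)t\xi = x + (1-\lambda)(2t\xi - x)$, and use $D_\xi^a \CVext^3 = \iiint (2(1-\lambda)t)^a D_x^{3+a}\Vext(y_\lambda,t)\,d\lambda\cdots$. The plan is to show that the segment spends only a short parameter length inside the region where $\Vext$ is large, and that this short length produces the gain $t^{-1/2}2^{-j}$.

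Under the hypotheses, $|2t\xi - x| = 2t|\xi-\xi_\textup{stat}| \sim 2^{j}t^{1/2}$, so the segment has length $L\sim 2^j t^{1/2}$. Since $|x|\lesssim 2^j t^{1/2}$, the segment stays within distance $O(L)$ of the origin. Along the segment, $|D_x^{k}\Vext(y)| \lesssim \max(|y|,t^\beta)^{-k-\mu}$, and $\Vext$ vanishes for $|y|\lesssim t^\beta$.

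For the unconditional bound \eqref{eqn:CVext-3-uncond-refined}, parametrize the segment by arclength $s=(1-\lambda)L$, so that $d\lambda = ds/L$. The iterated integral in \eqref{eqn:CVext-3-def} is bounded by a single integral $\int_0^1|D^3_x\Vext(y_\lambda)|\,d\lambda$. In the worst case the segment passes near the origin, and then
\begin{equation*}
\int_0^1 \max(|y_\lambda|,t^\beta)^{-3-\mu}\,d\lambda \lesssim L^{-1}\int_{\bbR}\max(|s|,t^\beta)^{-3-\mu}\,ds \lesssim L^{-1}t^{-(2+\mu)\beta},
\end{equation*}
which is $t^{-(2+\mu)\beta-1/2}2^{-j}$. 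The real difficulty is the $\xi$-derivatives. Each one brings a factor $(1-\lambda)t$ against one more $x$-derivative of $\Vext$, costing $t\max(|y|,t^\beta)^{-1}$. This is harmless only where $|y|\gtrsim t$, which is false near the origin. To handle this I would not differentiate the segment integral directly. Instead I would change variables to the arclength/endpoint parametrization $y = 2t\xi + \sigma(x-2t\xi)$ and let the $\xi$-derivatives fall on the Jacobian and the direction, where each costs only $O(t/L)$ or $O(1)$. Alternatively, if the argument only requires $a\le 5$ with $\beta$ close to $1$, I would check whether the powers $t^{a}$ are absorbed by the factor $2^{-j}$ and the restriction on the range of $j$. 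I expect this derivative bookkeeping to be the main obstacle. It may also force the lemma to be read with $\xi$-derivatives taken in the direction-preserving sense, i.e.\ as they actually enter through $\mathcal{L}_0^*$.

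For the angular bound \eqref{eqn:CVext-3-angular-refined}, the same computation applies with $t^\beta$ replaced by the distance from the origin to the line through $x$ and $2t\xi$. That distance is $d=|x|\sin\theta$, since $\theta$ is the angle at $x$ between $x$ and the segment direction. Then $|y_\lambda|\ge d$ and $|y_\lambda|^2 \gtrsim d^2+s'^2$, where $s'$ is the arclength measured from the closest point. This gives
\begin{equation*}
\int_0^1 |y_\lambda|^{-3-\mu}\,d\lambda \lesssim L^{-1}\int_{\bbR}(d^2+s^2)^{-(3+\mu)/2}\,ds \lesssim L^{-1} d^{-(2+\mu)} = (|x|\sin\theta)^{-(2+\mu)} t^{-1/2}2^{-j},
\end{equation*}
and the derivatives are handled exactly as in the previous step. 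Taking the minimum of the two bounds is what will allow summation in $j$ in the interior region.
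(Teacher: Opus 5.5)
For $a=0$ your argument is the paper's argument. You write $|z(\lambda)|^2=|z(\lambda_0)|^2+|x-2t\xi|^2(\lambda-\lambda_0)^2$ with $|z(\lambda_0)|=|x|\sin\theta$, and split the $\lambda$-integral at $|\lambda-\lambda_0|\sim t^\beta/|x-2t\xi|$ (resp.\ $|x|\sin\theta/|x-2t\xi|$). The gap is the range $1\le a\le 5$, which you leave open, and none of your suggested remedies closes it.

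Since $D_\xi^a\CVext^3=\int_0^1 w(\lambda)\,(2(1-\lambda)t)^a\,D_x^{3+a}\Vext(z(\lambda),t)\,d\lambda$ with $w$ bounded, each $\xi$-derivative costs $t/\max(|z|,t^\beta)$. This cost is genuine, not an artifact of the parametrization. Perturbing $\xi$ transversally pivots the segment about $x$ and moves its point of closest approach by about $2t(1-\lambda_0)|\delta\xi|$. Meanwhile the line integral of $D_x^3\Vext$ varies on the transverse scale $\max(|x|\sin\theta,t^\beta)$. So no change of variables removes the loss, and since the loss does not depend on $j$, it cannot be absorbed by $2^{-j}$. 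Derivatives along $x-2t\xi$ (the direction in $\mathcal{L}_0$) are better behaved. Even there, integrating by parts in $\lambda$ still leaves a factor $t/|x-2t\xi|\sim t^{1/2}2^{-j}$ rather than $O(1)$. In any case, the $D_\xi^3$ contracted into $\CVext^3$ in $\trmInst^j$ differentiates in all directions.

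In fact the stated bounds fail for every $a\ge1$. Take $V=\jBra{x}^{-\mu}$ and orthonormal vectors $e,\hat n$. Set $x=de+\tfrac{L}{2}\hat n$ and $2t\xi=de-\tfrac{L}{2}\hat n$, with $d=3t^\beta$, $L=2^{j+2}t^{1/2}$ and $2^j\sim t^{1/2}$. Then the hypotheses hold, $\Vext=V$ near the whole line, $\lambda_0=\tfrac12$, and $|x|\sin\theta=d$. Put $h(d)=\int_{\bbR}\big(\jBra{de+s\hat n}^{-\mu}-\jBra{s\hat n}^{-\mu}\big)\,ds$. One computes $h'(d)=-c_\mu\, d\,(1+d^2)^{-(1+\mu)/2}$ with $c_\mu>0$, hence $|h^{(3+a)}(d)|\sim d^{-(2+a+\mu)}$. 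The $e^{\otimes(3+a)}$ component of $D_\xi^a\CVext^3$ equals $w(\tfrac12)\,t^a L^{-1} h^{(3+a)}(d)\,(1+O(d/L))$. Therefore $|D_\xi^a\CVext^3|\gtrsim t^{a(1-\beta)}\, t^{-(2+\mu)\beta}L^{-1}$, which exceeds both \eqref{eqn:CVext-3-uncond-refined} and \eqref{eqn:CVext-3-angular-refined} by the unbounded factor $t^{a(1-\beta)}$.

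The paper's proof likewise treats only $a=0$ and asserts that the other cases are analogous. So you have found a real weak point rather than missed an idea, but the lemma cannot be proved as stated for $a\ge1$. What your closest-point argument does prove, when applied to $D_x^{3+a}\Vext$, is the $a=0$ bounds multiplied by $\big(t/\max(|x|\sin\theta,t^\beta)\big)^a$. The interior estimate \eqref{eqn:trmInst-int} would then have to be redone with this loss.
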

\begin{figure}
  \centering
  \begin{tikzpicture}[scale=1, >=Stealth]
    % Coordinates
    \coordinate (O) at (0,0);
    \coordinate (X) at (-3, 1.5);
    \coordinate (Z) at (5, 3.5);
    
    % Main Triangle Lines
    \draw[thick] (O) -- (X) node[left, xshift=-5pt, yshift=5pt] {$x$};
    \draw[thick] (O) -- (Z) node[right, xshift=5pt, yshift=5pt] {$2t\xi$};
    \draw[thick] (X) -- (Z);
    \draw (O) node[below, xshift=5pt, yshift=-3pt] {$0$};
    
    % Dots at vertices
    \fill (X) circle (1.5pt);
    \fill (Z) circle (1.5pt);
    \fill (O) circle (1.5pt);
    
    % Line segment R (perpendicular to X-Z from Origin)
    \coordinate (P) at ($(X)!(O)!(Z)$);
    \draw[thick] (O) -- (P);
    \fill (P) circle (1.5pt) node[above, xshift=-5pt, yshift=5pt] {$z(\lambda_0)$};
    
    % Right angle symbol at P
    \draw ($(P)!0.2cm!(X)$) -- ($($(P)!0.2cm!(X)$)!0.2cm!90:(X)$) -- ($(P)!0.2cm!90:(X)$);

    % Angle Theta at X
    \pic [draw, "$\theta$", angle eccentricity=1.5, angle radius=0.6cm] {angle = O--X--Z};

    % Braces for distances (Flipped to the outside)
    
    % Brace for |x|
    % Moving from O to X, the left side (no mirror) is the "outer/lower" side
    \draw [decorate, decoration={brace, amplitude=10pt, raise=8pt}]
      (O) -- (X) node [black, midway, sloped, yshift=-30pt] {$|x|$};

    % Brace for |x-2t\xi|
    % Moving from Z to O, the left side (no mirror) is the "outer/upper" side
    % Alternatively, move O to Z with mirror to stay on the bottom
    %\draw [decorate, decoration={brace, amplitude=10pt, raise=8pt}]
    %  (X) -- (Z) node [black, midway, sloped, yshift=30pt] {$|x-2t\xi|$};
  \end{tikzpicture}
  \caption{\label{fig:CVext3-ref-diagram-1} The triangle with vertices $x$, $2t\xi$, and the origin, together with the geometric information required in the proof of~\eqref{eqn:CVext-3-uncond-refined}.  Here, $z(\lambda_0)$ is the point on the line joining the points $x$ and $2t\xi$ that is closest to the origin; elementary trigonometry shows that $|z(\lambda_0)| = |x| \sin \theta$, where $\theta$ is the angle at the vertex $x$.}
\end{figure}
\begin{proof}
    We begin with the refined bound~\eqref{eqn:CVext-3-uncond-refined}.  We will focus on the case $a = 0$, since the other bounds are analogous.  Recall that
    \begin{equation*}
        \CVext^3(x,\xi,t) = \int_0^1 \int_0^{\lambda_1} \int_0^{\lambda} D_x^3 \Vext(\lambda x + 2(1-\lambda)t\xi) \;d\lambda d\lambda_2 d\lambda_1
    \end{equation*}
    and by the definition of $\Vext$,
    \begin{equation}\label{eqn:V-ext-cond}
        |D_x^3 \Vext(z)| \lesssim \max(|z|, t^\beta)^{-(3 + \mu)}
    \end{equation}
    Define $z = z(\lambda) = 2t\xi + \lambda (x-2t\xi)$, and let $\lambda_0 = \lambda_0(x,\xi,t)$ be the value of $\lambda$ that minimizes $|z(\lambda)|$.  In particular, $z(\lambda_0)$ is the orthogonal projection of the vector $0$ onto the line passing through the points $x$ and $2t\xi$, so we have that
    \begin{equation*}
        |z(\lambda)|^2 = |z(\lambda_0)|^2 + |x-2t\xi|^2(\lambda - \lambda_0)^2 
    \end{equation*}
    In particular, this shows that $|z(\lambda)| \geq |x-2t\xi||\lambda - \lambda_0|$, so
    \begin{equation}\label{eqn:D-x-3-CVext-angular-bound}
        |D_x^3 \Vext(z)| \lesssim \begin{cases}
            t^{-(3+\mu)\beta} & |\lambda - \lambda_0| \leq \frac{t^\beta}{|x - 2t\xi|}\\
            \left(|x-2t\xi||\lambda - \lambda_0|\right)^{-(3+\mu)} & |\lambda - \lambda_0| > \frac{t^\beta}{|x - 2t\xi|}
        \end{cases}
    \end{equation}
    Inserting this inequality in the definition of $\CVext^3$ and recalling the hypothesis that $|x - 2t\xi| \sim t^{1/2} 2^j$ now yields
    \begin{equation*}\begin{split}
        |\CVext^3(x,\xi,t)| \lesssim& \int_{\left\{|\lambda - \lambda_0| \leq \frac{t^\beta}{|x - 2t\xi|}\right\}} t^{-(3+\mu)\beta} \;d\lambda + \int_{\left\{|\lambda - \lambda_0| \geq \frac{t^\beta}{|x - 2t\xi|}\right\}} \left(|x-2t\xi||\lambda - \lambda_0|\right)^{-(3+\mu)}\;d\lambda\\
        \lesssim& t^{-(2+\mu)\beta -1/2} 2^{-j}
    \end{split}\end{equation*}
    as required by~\eqref{eqn:CVext-3-uncond-refined}.

    To obtain the angular improvement~\eqref{eqn:CVext-3-angular-refined}, we observe that
    \begin{equation*}
        |z(\lambda_0)| = |x|\sin \theta
    \end{equation*}
    Thus, we have that
    \begin{equation}\label{eqn:D-x-3-Vext-angular-bound}
        |D_x^3 \Vext(z)| \lesssim \begin{cases}
            (|x|\sin \theta)^{-(3+\mu)} & |\lambda - \lambda_0| \leq \frac{|x|\sin \theta}{|x - 2t\xi|}\\
            \left(|x-2t\xi||\lambda - \lambda_0|\right)^{-(3+\mu)} & |\lambda - \lambda_0| > \frac{|x|\sin \theta}{|x - 2t\xi|}
        \end{cases}
    \end{equation}
    and inserting this into the definition of $\CVext^3$ gives~\eqref{eqn:CVext-3-angular-refined}.
\end{proof}

To fully take advantage of~\eqref{eqn:CVext-3-angular-refined}, we need the following lemma:
\begin{lemma}\label[lemma]{lem:freq-space-vol-bound}
    For fixed $x$, the set of all $\xi$ such that
    \begin{equation}
        |x - 2t\xi| \in [t^{1/2} 2^j, t^{1/2} 2^{j+2}]
    \end{equation}
    and
    \begin{equation}
        \theta \in [2^{\ell-1}, 2^{\ell+1}]
    \end{equation}
    (with $\theta$ defined as in~\Cref{lem:CVext-3-exterior-bound-refined}) has volume $O(t^{-3/2} 2^{3j} 2^{2\ell})$.
\end{lemma}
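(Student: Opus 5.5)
We change variables from $\xi$ to $w := 2t\xi - x$. For fixed $x$ and $t$ this is an affine map with Jacobian $d\xi = (2t)^{-3}\,dw$, so the volume of the set in question equals $(2t)^{-3}$ times the volume of the corresponding set in $w$. In the $w$ variable both conditions are elementary. The first condition reads $|w| \in [t^{1/2}2^j, t^{1/2}2^{j+2}]$. The second says that $\theta$, the angle between $x$ and $w = 2t\xi - x$ (directly from the definition $\cos\theta = \frac{x\cdot(2t\xi - x)}{|x||x - 2t\xi|}$), lies in $[2^{\ell-1}, 2^{\ell+1}]$. We may assume $x \neq 0$, since otherwise $\theta$ is undefined and there is nothing to prove.

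We use spherical coordinates for $w$ with polar axis along $x/|x|$, so $dw = r^2 \sin\theta \, dr\, d\theta\, d\varphi$. The volume in $w$ is then
\begin{equation*}
    2\pi \int_{t^{1/2}2^j}^{t^{1/2}2^{j+2}} r^2\,dr \int_{[2^{\ell-1},2^{\ell+1}]\cap[0,\pi]} \sin\theta\,d\theta .
\end{equation*}
The radial integral is $O(t^{3/2}2^{3j})$. Using $\sin\theta \le \theta$, the angular integral is at most $\int_{2^{\ell-1}}^{2^{\ell+1}} \theta\,d\theta \lesssim 2^{2\ell}$. Hence the $w$-volume is $O(t^{3/2}2^{3j}2^{2\ell})$. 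Multiplying by the Jacobian factor $(2t)^{-3}$ gives $O(t^{-3/2}2^{3j}2^{2\ell})$, as claimed.

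The only point requiring care is the angular factor, since the bound $2^{2\ell}$ must come from the solid-angle measure $\sin\theta\,d\theta\,d\varphi$ of a cap of aperture $\sim 2^\ell$, and not from a crude $d\theta$ estimate, which would only give $2^\ell$. For $\ell$ large the range $[2^{\ell-1},2^{\ell+1}]$ exceeds $[0,\pi]$ and the set is empty. For $\ell$ bounded the bound $2^{2\ell} \gtrsim 1$ is trivially valid. So the estimate holds uniformly in $\ell$, with no real obstacle beyond this bookkeeping.
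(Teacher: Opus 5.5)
Your proof is correct and is essentially the paper's argument: the paper also recenters at $x$ (your $w = 2t\xi - x$) and uses spherical coordinates with $\theta$ as the polar angle to get $\int r^2\sin\theta\,dr\,d\theta\,d\varphi = O(t^{3/2}2^{3j}2^{2\ell})$, then divides by the Jacobian factor $(2t)^3$. Your explicit use of $\sin\theta \le \theta$ to bound the angular factor by $2^{2\ell}$ simply spells out a step the paper leaves implicit.
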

\begin{proof}
    If we recenter the origin at $x$, then $\theta$ becomes the polar angle and $r = x - 2t\xi$ becomes a radial coordinate; see~\Cref{fig:spherical-coords}.  Denoting the azimuthal angle by $\phi$, we see that the volume of the set of all points $z$ such that $r = |x - z| \in [t^{1/2} 2^j,t^{1/2} 2^{j+2}]$ and $\theta = \arccos \frac{x \cdot (z-x)}{|x||x-z|} \in [2^{\ell-1}, 2^{\ell+1}]$ is given by
    \begin{equation*}
        \int_0^{2\pi} \int_{2^{\ell-1}}^{2^{\ell+1}} \int_{t^{1/2}2^j}^{t^{1/2}2^{j+2}} \!\!\!\!r^2 \sin \theta \;dr d\theta d\phi = O(t^{3/2} 2^{3j} 2^{2\ell})
    \end{equation*}
    Since $z = 2t\xi$, dividing by the Jacobian factor $(2t)^3$ gives the desired frequency-space volume.
\end{proof}
\begin{figure}
    \begin{center}
\begin{tikzpicture}[scale=0.8]
    % --- CONFIGURATION ---
    \def\r{1.5}       % Inner radius (represents 1 * t^{1/2} 2^j)
    \def\R{4.5}       % Outer radius (represents 4 * t^{1/2} 2^j)
    \def\angMin{15}   % Inner angle (represents 1/2 * 2^n)
    \def\angMax{45}   % Outer angle (represents 2 * 2^n)
    \def\tilt{0.15}   % Perspective tilt factor for the 3D ellipses
    
    % --- COORDINATES ---
    \coordinate (X) at (0, 0);    % Point x
    
    % --- 3D BACKGROUND (Translucent back surfaces and edges) ---
    % Drawn before the 2D section so they sit behind it

    % Outer spherical cap (back)
    \fill[cyan!40, opacity=0.25]
        (90-\angMax:\R) arc (90-\angMax:90-\angMin:\R)
        arc (0:180: {\R*sin(\angMin)} and {\tilt*\R*sin(\angMin)})
        arc (90+\angMin:90+\angMax:\R)
        arc (180:0: {\R*sin(\angMax)} and {\tilt*\R*sin(\angMax)})
        -- cycle;

    % Outer conical wall (back)
    \fill[cyan!50, opacity=0.15]
        (90-\angMax:\r) -- (90-\angMax:\R)
        arc (0:180: {\R*sin(\angMax)} and {\tilt*\R*sin(\angMax)})
        -- (90+\angMax:\r)
        arc (180:0: {\r*sin(\angMax)} and {\tilt*\r*sin(\angMax)})
        -- cycle;

    % Inner spherical cap (back)
    \fill[cyan!40, opacity=0.25]
        (90-\angMax:\r) arc (90-\angMax:90-\angMin:\r)
        arc (0:180: {\r*sin(\angMin)} and {\tilt*\r*sin(\angMin)})
        arc (90+\angMin:90+\angMax:\r)
        arc (180:0: {\r*sin(\angMax)} and {\tilt*\r*sin(\angMax)})
        -- cycle;

    % Inner conical wall (back)
    \fill[cyan!50, opacity=0.15]
        (90-\angMin:\r) -- (90-\angMin:\R)
        arc (0:180: {\R*sin(\angMin)} and {\tilt*\R*sin(\angMin)})
        -- (90+\angMin:\r)
        arc (180:0: {\r*sin(\angMin)} and {\tilt*\r*sin(\angMin)})
        -- cycle;

    % Back edges
    \draw[dashed, cyan!60!black, thin] (90-\angMin:\r) arc (0:180: {\r*sin(\angMin)} and {\tilt*\r*sin(\angMin)});
    \draw[dashed, cyan!60!black, thin] (90-\angMax:\r) arc (0:180: {\r*sin(\angMax)} and {\tilt*\r*sin(\angMax)});
    \draw[cyan!60!black, thin] (90-\angMin:\R) arc (0:180: {\R*sin(\angMin)} and {\tilt*\R*sin(\angMin)});
    %\draw[cyan!60!black, thin] (90-\angMax:\R) arc (0:180: {\R*sin(\angMax)} and {\tilt*\R*sin(\angMax)});

    % --- DRAW THE TARGET REGIONS (Opaque 2D Slice) ---
    % Right side patch
    \filldraw[fill=cyan!20, draw=cyan!80!black, thick]
        (90-\angMin:\r) arc (90-\angMin:90-\angMax:\r)
        -- (90-\angMax:\R) arc (90-\angMax:90-\angMin:\R)
        -- cycle;
        
    % Left side patch
    \filldraw[fill=cyan!20, draw=cyan!80!black, thick]
        (90+\angMin:\r) arc (90+\angMin:90+\angMax:\r)
        -- (90+\angMax:\R) arc (90+\angMax:90+\angMin:\R)
        -- cycle;

    % --- 3D FOREGROUND (Translucent surfaces) ---
    % Drawn over the 2D slice to embed it in "glass"

    % Inner conical wall (front)
    \fill[cyan!50, opacity=0.15]
        (90-\angMin:\r) -- (90-\angMin:\R)
        arc (0:-180: {\R*sin(\angMin)} and {\tilt*\R*sin(\angMin)})
        -- (90+\angMin:\r)
        arc (-180:0: {\r*sin(\angMin)} and {\tilt*\r*sin(\angMin)})
        -- cycle;
        
    % Outer conical wall (front)
    \fill[cyan!50, opacity=0.15]
        (90-\angMax:\r) -- (90-\angMax:\R)
        arc (0:-180: {\R*sin(\angMax)} and {\tilt*\R*sin(\angMax)})
        -- (90+\angMax:\r)
        arc (-180:0: {\r*sin(\angMax)} and {\tilt*\r*sin(\angMax)})
        -- cycle;

    % Inner spherical cap (front)
    \fill[cyan!40, opacity=0.25]
        (90-\angMax:\r) arc (90-\angMax:90-\angMin:\r)
        arc (0:-180: {\r*sin(\angMin)} and {\tilt*\r*sin(\angMin)})
        arc (90+\angMin:90+\angMax:\r)
        arc (-180:0: {\r*sin(\angMax)} and {\tilt*\r*sin(\angMax)})
        -- cycle;

    % Outer spherical cap (front)
    \fill[cyan!40, opacity=0.25]
        (90-\angMax:\R) arc (90-\angMax:90-\angMin:\R)
        arc (0:-180: {\R*sin(\angMin)} and {\tilt*\R*sin(\angMin)})
        arc (90+\angMin:90+\angMax:\R)
        arc (-180:0: {\R*sin(\angMax)} and {\tilt*\R*sin(\angMax)})
        -- cycle;

    % --- 3D FOREGROUND EDGES (Front halves of ellipses) ---
    \draw[cyan!70!black, thin] (90-\angMin:\r) arc (0:-180: {\r*sin(\angMin)} and {\tilt*\r*sin(\angMin)});
    \draw[cyan!70!black, thin] (90-\angMax:\r) arc (0:-180: {\r*sin(\angMax)} and {\tilt*\r*sin(\angMax)});
    \draw[cyan!70!black, thin] (90-\angMin:\R) arc (0:-180: {\R*sin(\angMin)} and {\tilt*\R*sin(\angMin)});
    \draw[cyan!70!black, thin] (90-\angMax:\R) arc (0:-180: {\R*sin(\angMax)} and {\tilt*\R*sin(\angMax)});

    % --- AXES & EXTENSIONS ---
    % Dotted guide lines showing the full conical boundaries
    \draw[dashed, gray] (X) -- (90-\angMin:\R + 0.8);
    \draw[dashed, gray] (X) -- (90-\angMax:\R + 0.8);
    \draw[dashed, gray] (X) -- (90+\angMin:\R + 0.8);
    \draw[dashed, gray] (X) -- (90+\angMax:\R + 0.8);

    % Label the thickness of the shell
    \draw [decorate, decoration={brace, mirror, amplitude=10pt, raise=8pt}]
      (90-\angMax:\r) -- (90-\angMax:\R) node [black, midway, sloped, yshift=-30pt] {$|x-2t\xi| \in  \left[t^{1/2} 2^{j},t^{1/2} 2^{j+2}\right]$};
    
    % Center axis (continuation of x)
    \draw[dashed, gray] (X) -- (0, \R + 0.8);

    % --- VECTORS ---
    \fill (X) circle (2pt) node[below right, xshift=2pt] {$x$};

    % --- ANGLE ANNOTATIONS ---
    % Draw angle arc for theta
    \draw[<->, >=stealth, darkgray] (3.535, 3.535) arc (45:75:5) node[midway, above right, inner sep=1pt] {$\theta \in \left[2^{\ell-1},2^{\ell+1}\right]$};

\end{tikzpicture}
\end{center}
\caption{\label{fig:spherical-coords} The region described in~\Cref{lem:freq-space-vol-bound}.}
\end{figure}

Returning to the interior region $|x| \leq t^{1/2}2^{j+10}$, we integrate by parts twice using $\mathcal{L}_0$ and introduce a further angular decomposition to write
\begin{equation*}
    \trmInst^j(x,t) = \trmInst^{j,< L}(x,t) + \sum_{\ell = L}^\infty \trmInst^{j,\ell}(x,t)
\end{equation*}
where we define
\begin{equation*}
    \trmInst^{j,< L} := \int e^{ix\cdot \xi - it\xi^2} F(2^{-L} |\theta| \leq 1) (\mathcal{L}_0^*)^2 \left[ \chi_j(t^{1/2}|\xi -\xi_\textup{stat}|) D_\xi^3 \cdot \left(\CVext^3(x,\xi,t) \hat{f}\right)\right] \;d\xi
\end{equation*}
and
\begin{equation*}
    \trmInst^{j,\ell} := \int e^{ix\cdot \xi - it\xi^2} \chi_\ell(\theta) (\mathcal{L}_0^*)^2 \left[ \chi_j(t^{1/2}|\xi -\xi_\textup{stat}|) D_\xi^3 \cdot \left(\CVext^3(x,\xi,t) \hat{f}\right)\right] \;d\xi
\end{equation*}
If we choose $L = L(x,t)$ so that $2^L |x| \sim t^\beta$, then using~\Cref{lem:freq-space-vol-bound} and~\eqref{eqn:CVext-3-uncond-refined} from~\Cref{lem:CVext-3-exterior-bound-refined}, we obtain the pointwise bound
\begin{equation*}\begin{split}
    |\trmInst^{j,< N(x,t)}(x,t)| \lesssim& t^{-9/4 - (2+\mu)\beta} 2^{-3/2j} 2^L t^{3\rho}\left(t 2^{-{2j}} + t^{2\rho}\right)\\
    \lesssim& t^{-9/4 - (1+\mu)\beta} 2^{-3/2j} t^{3\rho}\left(t 2^{-{2j}} + t^{2\rho}\right) |x|^{-1}
\end{split}\end{equation*}
On the other hand, for $\ell \geq L(x,t)$, we can instead use~\eqref{eqn:CVext-3-angular-refined} to obtain
\begin{equation*}
    |\trmInst^{j,\ell}(x,t)| \lesssim t^{-9/4} 2^{-3/2j} 2^{-(1+\mu)\ell} t^{3\rho}\left(t 2^{-{2j}} + t^{2\rho}\right)|x|^{-(2+\mu)}
\end{equation*}
Summing over $\ell \geq L$ and recalling that $2^L \sim |x|^{-1} t^\beta$ now gives
\begin{equation*}
    |\trmInst^j(x,t)| \lesssim t^{-9/4 - (1+\mu)\beta} 2^{-3/2j} t^{3\rho}\left(t 2^{-{2j}} + t^{2\rho}\right) |x|^{-1}
\end{equation*}
Taking the $L^2$ integral over the region $|x| \leq t^{1/2} 2^{j+10}$ now gives
\begin{equation}\label{eqn:trmInst-int}
    \lVert \trmInst^j(x,t) \rVert_{L^2(|x| \leq t^{1/2} 2^{j+10})} \lesssim t^{-2 - (1+\mu)\beta} 2^{-j} t^{3\rho}\left(t^{2\rho} + t 2^{-2j}\right)
\end{equation}
which is integrable in $t$ after summing over $j \geq 0$ based on our assumptions on $\rho$ and $\beta$.

\subsection{Existence of $\Omega_\ext^{F_\alpha \gamma F_\alpha}$}\label{sec:ext-F-gamma-F-WO}

The existence of $\Omega_\ext^{F_\alpha \gamma F_\alpha}$ is a straightforward application of Cook's method.  We see that
\begin{equation*}
    \Omega_\ext^{F_\alpha \gamma F_\alpha} = U_\ext(0,1) F \gamma F U(1,0) + \int_1^\infty \frac{d}{dt} \left(U_\ext(0,t) F_\alpha \gamma F_\alpha U(t,0)\right)\;dt
\end{equation*}
provided that the integral term converges in $L^2$.  Expanding the derivative, we see that
\begin{align*}
    \frac{d}{dt} \left(U_\ext(0,t) F_\alpha \gamma F_\alpha U(t,0)\right) =& U_\ext(0,t) (-i\Delta + iV_\ext(x,t))  F_\alpha \gamma F_\alpha U(t,0)\\
    &+ U_\ext(0,t) (\frac{d}{dt} F_\alpha) \gamma F_\alpha U(t,0) + U_\ext(0,t) F_\alpha \gamma (\frac{d}{dt} F_\alpha) U(t,0)\\
    &+ U_\ext(0,t)   F_\alpha \gamma F_\alpha (i\Delta - iV(x,t)) U(t,0)
\end{align*}
Since we are assuming that $\beta < \alpha$, $F_\alpha V_\ext = F_\alpha V$, so this becomes
\begin{subequations}\begin{align}
    \frac{d}{dt} \left(U_\ext(0,t) F_\alpha \gamma F_\alpha U(t,0)\right) =& U_\ext [-i\Delta, F_\alpha \gamma F_\alpha] U(t,0)\label{eqn:ext-rad-mom-WO-lap-comm}\\
    &+ U_\ext \left((\frac{d}{dt} F_\alpha) \gamma F_\alpha + F_\alpha \gamma (\frac{d}{dt} F_\alpha) \right)U(t,0)\label{eqn:ext-rad-mom-WO-time-deriv}\\
    &+ U_\ext  F_\alpha [iV,\gamma] F_\alpha U(t,0)\label{eqn:ext-rad-mom-WO-pot-comm}
\end{align}\end{subequations}
For~\eqref{eqn:ext-rad-mom-WO-pot-comm}, we immediately see that
\begin{equation*}
    \lVert \eqref{eqn:ext-rad-mom-WO-pot-comm} \rVert_{L^2 \to L^2} \lesssim t^{-(1+\mu) \alpha}
\end{equation*}
is integrable in time, so it only remains to consider the other two terms.  For~\eqref{eqn:ext-rad-mom-WO-lap-comm}, we observe that
\begin{equation*}
    [-i\Delta, F_\alpha \gamma F_\alpha] = F_\alpha [-i\Delta, \gamma] F_\alpha + [-i\Delta, F_\alpha] \gamma F_\alpha + F_\alpha \gamma [-i\Delta, F_\alpha] 
\end{equation*}
For the leading term, we have by~\eqref{eqn:wls-symb-comm} that
\begin{equation*}
    F_\alpha [-i\Delta, \gamma] F_\alpha = - 2 F_\alpha \nabla_x \cdot \frac{1}{|x|}\left(I - \frac{x}{|x|} \otimes \frac{x}{|x|}\right) \nabla_x F_\alpha
\end{equation*}
For the remaining two terms, we observe that
\begin{equation}\label{eqn:F-alpha-lap-comm-id}\begin{split}
    [-i\Delta, F_\alpha]    =& t^{-\alpha} \left(F_\alpha' \gamma + \gamma F_\alpha'\right)\\
                            =& 2t^{-\alpha} F'_\alpha \gamma + t^{-2\alpha} F''_\alpha\\
                            =& 2t^{-\alpha} \gamma F'_\alpha - t^{-2\alpha} F''_\alpha\\
\end{split}\end{equation}
where we adopt the convention that
\begin{equation*}
    F'_\alpha(r) := F'\left(\frac{r}{t^\alpha} \geq 1\right)
\end{equation*}
and similarly for higher derivatives of $F_\alpha$.  In particular, symmetrizing using~\Cref{lem:three-term-sym} gives
\begin{equation*}\begin{split}
    \eqref{eqn:ext-rad-mom-WO-lap-comm} =& -2 U_\ext(0,t) F_\alpha \nabla_x \cdot S(x) \nabla_x F_\alpha U(t,0)\\
    &+ 2t^{-\alpha} U_\ext(0,t)  \left(F'_\alpha \gamma^2 F_\alpha + F_\alpha \gamma^2 F'_\alpha\right) U(t,0)\\
    &+ t^{-2\alpha} U_\ext(0,t) \left(F''_\alpha \gamma F_\alpha - F_\alpha \gamma F''_\alpha\right) U(t,0)\\
    =& -2 U_\ext(0,t) F_\alpha \nabla_x \cdot S(x) \nabla_x F_\alpha U(t,0)\\
    &+ 4t^{-\alpha}U_\ext(0,t)  \sqrt{F_\alpha F'_\alpha} \gamma^2 \sqrt{F_\alpha F'_\alpha} U(t,0) + R(t)
\end{split}\end{equation*}
where $R(t)$ satisfies
\begin{equation*}
    \lVert R(t) \rVert_{L^2 \to L^2} \lesssim t^{-3\alpha}
\end{equation*}
In particular, the remainder $R(t)$ is integrable, so it only remains to handle the main term.  For this purpose, we use the following bound:
\begin{proposition}
    For any $\phi \in L^2$ and any $u_0 \in H^1$ such that $u(t) = U(t,0) u_0$ satisfies the finite energy condition
    \begin{equation*}
        \lVert u(x,t) \rVert_{L^\infty_t H^1_x} < \infty
    \end{equation*}
    we have that
    \begin{equation}\label{eqn:crooked-PRES-conc-1}\begin{split}
        \int_1^T \langle U_\ext(t,0) \phi,  \left(4t^{-\alpha}\sqrt{F_\alpha F'_\alpha} \gamma^2 \sqrt{F_\alpha F'_\alpha} -2 F_\alpha \nabla_x \cdot S(x) \nabla_xF_\alpha\right) u(t) \rangle\;dt \lesssim \lVert \phi \rVert_{L^2} \lVert u(x,t) \rVert_{L^\infty_t H^1_x}
    \end{split}\end{equation}
    uniformly in $T$.  In particular, 
    \begin{equation}\label{eqn:crooked-PRES-conc-2}
        \int_1^\infty \langle U_\ext(t,0) \phi, \left(4t^{-\alpha}\sqrt{F_\alpha F'_\alpha} \gamma^2 \sqrt{F_\alpha F'_\alpha} -2 F_\alpha \nabla_x \cdot S(x) \nabla_xF_\alpha\right) u(t) \rangle_{L^2_x}\;dt  \lesssim \lVert \phi \rVert_{L^2} \lVert u(x,t) \rVert_{L^\infty_t H^1_x}
    \end{equation}
\end{proposition}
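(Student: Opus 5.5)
The plan is to use that the operator
\begin{equation*}
    P(t) := 4t^{-\alpha}\sqrt{F_\alpha F'_\alpha}\, \gamma^2 \sqrt{F_\alpha F'_\alpha} -2 F_\alpha \nabla_x \cdot S(x) \nabla_x F_\alpha
\end{equation*}
is positive semi-definite. This holds because $S(x) \geq 0$, $\gamma$ is symmetric and $F' \geq 0$. Positivity lets us apply Cauchy--Schwarz twice, first for the form $P(t)$ and then in time:
\begin{equation*}
    \left|\int_1^T \langle U_\ext(t,0)\phi, P(t) u(t)\rangle\,dt\right| \leq \left(\int_1^T \langle P(t)\rangle_{\psi(t)}\,dt\right)^{1/2}\left(\int_1^T \langle P(t)\rangle_{u(t)}\,dt\right)^{1/2},
\end{equation*}
where $\psi(t) = U_\ext(t,0)\phi$. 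It then suffices to bound each time integral by the square of the corresponding norm, uniformly in $T$. The limit statement \eqref{eqn:crooked-PRES-conc-2} follows from \eqref{eqn:crooked-PRES-conc-1} by monotone convergence of each factor. More precisely, we will get absolute integrability, since $|\langle a,Pb\rangle| \le \langle a,Pa\rangle^{1/2}\langle b,Pb\rangle^{1/2}$ pointwise in $t$.

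For the $u$ factor, we reuse the Heisenberg identity \eqref{eqn:F-gamma-F-Heis} from the proof of \Cref{thm:optimal-spread-theorem}, i.e.\ Corollary~\ref{cor:F-gamma-F-PRES}. By \eqref{eqn:wls-symb-comm}, \eqref{eqn:wls-bdy-comm}, \eqref{eqn:wls-time-deriv} and \eqref{eqn:wls-pot-comm},
\begin{equation*}
    \frac{d}{dt}\langle F_\alpha\gamma F_\alpha\rangle_t \geq \tfrac12\langle P(t)\rangle_t - C t^{\alpha-2} - C t^{-(1+\mu)\alpha} - Ct^{-3\alpha}.
\end{equation*}
Here half of the $\gamma^2$ boundary term absorbs the time-derivative term. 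Since $\alpha>\beta>\frac{3}{2+2\mu}$ and $\alpha<1$, all error terms are integrable on $[1,\infty)$. Integrating from $1$ to $T$ and using $|\langle F_\alpha\gamma F_\alpha\rangle_t| \lesssim \|u(t)\|_{H^1}\|u(t)\|_{L^2}$ gives
\begin{equation*}
    \int_1^T\langle P(t)\rangle_{u(t)}\,dt \lesssim \|u\|_{L^\infty_tH^1_x}^2.
\end{equation*}

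For the $\psi$ factor, the same computation applies, because $F_\alpha \Vext = F_\alpha V$ once $\alpha>\beta$. However, the boundary term $\langle F_\alpha\gamma F_\alpha\rangle$ is not controlled by $\|\phi\|_{L^2}$ alone, and this is the main obstacle. To get around it, I would first replace $\gamma$ by a bounded version. I would run the propagation estimate against the bounded observable
\begin{equation*}
    B = F_\alpha \tfrac12\bigl(\gamma\jBra{p}^{-1}+\jBra{p}^{-1}\gamma\bigr)F_\alpha .
\end{equation*}
Its Heisenberg derivative should be $\jBra{p}^{-1/2}P(t)\jBra{p}^{-1/2}$, up to commutator errors of size $O(t^{-\alpha-1})+O(t^{-(1+\mu)\alpha})$ in operator norm. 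These errors come from commuting $\jBra{p}^{-1/2}$ through $F_\alpha$, $S(x)$ and $V$. This gives the local-smoothing-type bound
\begin{equation*}
    \int_1^\infty \|P^{1/2}\jBra{p}^{-1/2}\psi\|^2\,dt \lesssim \|\phi\|_{L^2}^2.
\end{equation*}
Correspondingly, I would split the pairing as $\langle \jBra{p}^{-1/2}P^{1/2}\psi,\ \jBra{p}^{1/2}P^{1/2}u\rangle$ plus commutator terms. The $u$ side then requires
\begin{equation*}
    \int_1^T \|\jBra{p}^{1/2}P^{1/2}u\|^2\,dt \lesssim \|u\|_{L^\infty_t H^1_x}^2 .
\end{equation*}
I would obtain this from the observable $F_\alpha\gamma\jBra{p}F_\alpha$, whose expectation is bounded by the $H^1$ norm squared.

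The step I expect to be hardest is controlling the commutator remainders uniformly in $T$ so that they are integrable in $t$. These remainders arise from moving $\jBra{p}^{\pm1/2}$ across $\sqrt{F_\alpha F'_\alpha}$, $F_\alpha$ and $S(x)=|x|^{-1}(\cdots)$ on the support $|x|\gtrsim t^\alpha$. My first attempt would be symbolic calculus, where each commutator gains a factor $t^{-\alpha}$ from the spatial cutoffs. Where a remainder is not itself integrable, I would absorb it into the positive terms.
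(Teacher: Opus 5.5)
Your route is viable but considerably heavier than the paper's. The obstacle you identify comes from your first move, splitting the pairing into two diagonal expectations. In fact $\int_1^T \langle P(t)\rangle_{\psi(t)}\,dt$ is in general infinite for $\phi\in L^2$, since it is an $H^{1/2}$-level, Morawetz-type quantity. So your $\jBra{p}^{\pm 1/2}$ rebalancing is forced rather than optional.

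The paper never diagonalizes. It differentiates the mixed pairing $\langle U_\ext(t,0)\phi, F_\alpha\gamma F_\alpha u(t)\rangle$ directly. Because $F_\alpha V = F_\alpha \Vext$, the two different generators produce exactly the commutators in \eqref{eqn:F-gamma-F-Heis}. The derivative is therefore $\langle \psi, P(t)u\rangle$ plus operator-norm errors $O(t^{-(1+\mu)\alpha}) + O(t^{-3\alpha})$ and the $\partial_t F_\alpha$ term. The boundary values are $\lesssim \lVert\phi\rVert_{L^2}\lVert u\rVert_{L^\infty_t H^1_x}$, because $\gamma$ can always be placed on $u$. The $\partial_t F_\alpha$ term is at most $C t^{\alpha/2-1}\lVert \phi\rVert_{L^2} \lVert t^{-\alpha/2}\gamma\sqrt{F_\alpha F_\alpha'}\,u\rVert_{L^2} + O(t^{-1-\alpha})$, which Cauchy--Schwarz in time and \Cref{cor:F-gamma-F-PRES} handle. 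That is exactly your ``$u$ factor'' step, and it is the only propagation estimate needed. Nothing about $\psi$ beyond $\lVert\psi(t)\rVert_{L^2} = \lVert\phi\rVert_{L^2}$ enters.

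What your approach buys, once the two complementary half-derivative smoothing estimates are actually proved, is absolute integrability of $t\mapsto\langle\psi(t),P(t)u(t)\rangle$, with a tail bound uniform over $\lVert\phi\rVert_{L^2}\le 1$. This makes the improper integral in \eqref{eqn:crooked-PRES-conc-2} genuinely convergent and gives norm convergence of the associated Cook integral. The paper's identity by itself only gives a bound uniform in $T$, and leaves the convergence of the boundary pairing at time $T$ unaddressed.

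If you pursue your route, make two corrections. First, $\partial_t B$ has operator norm of order $t^{-1}$, so it is not an integrable error. It must be absorbed into the positive $\gamma^2$ term as in \eqref{eqn:wls-time-deriv}, at a cost of $t^{\alpha-2}\lVert\phi\rVert_{L^2}^2$. Second, instead of $P^{1/2}$ use the explicit factorization $P = \sum_k A_k^* A_k$ with $A_1 = 2t^{-\alpha/2}\gamma\sqrt{F_\alpha F_\alpha'}$ and $A_2 = \sqrt{2}\,S(x)^{1/2}\nabla_x F_\alpha$. Then the commutators $[A_k, \jBra{p}^{\mp 1/2}]$ are explicit operators of order $\mp 1/2$ and size $O(t^{-3\alpha/2})$, hence square integrable in time.
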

\begin{proof}
    To lighten the notation, for an operator $A$, we denote
    \begin{equation*}
        \langle A \rangle_{\phi,u,t} := \langle U_\ext(t,0) \phi, A u(t) \rangle_{L^2_x}
    \end{equation*}
    Observe that we have the Heisenberg derivative-like formula
    \begin{equation*}
        \frac{d}{dt} \langle A(t) \rangle_{\phi,u,t} = \langle (-i\Delta + i \Vext)A(t) \rangle_{\phi,u,t} + \langle A(t)(i\Delta - iV) \rangle_{\phi,u,t} + \langle \frac{d}{dt} A(t) \rangle_{\phi,u,t}
    \end{equation*}
    In particular, for $A(t) = F_\alpha \gamma F_\alpha$, the fact that $F_\alpha V = F_\alpha V_\ext$ implies that
    \begin{equation}\label{eqn:crooked-PRES}
        \frac{d}{dt} \langle F_\alpha \gamma F_\alpha \rangle_{\phi, u, t} = \langle [-i\Delta, F_\alpha \gamma F_\alpha] \rangle_{\phi, u, t} + \langle F_\alpha [iV,\gamma] F_\alpha \rangle_{\phi, u, t} + \langle \frac{d}{dt} F_\alpha \gamma F_\alpha + F_\alpha \gamma \frac{d}{dt} F_\alpha \rangle_{\phi, u, t}
    \end{equation}
    By repeating the arguments for~\eqref{eqn:ext-rad-mom-WO-lap-comm} and~\eqref{eqn:ext-rad-mom-WO-pot-comm}, we see that
    \begin{equation*}\begin{split}
        \langle [-i\Delta, F_\alpha \gamma F_\alpha] \rangle_{\phi, u, t} + \langle F_\alpha [iV,\gamma] F_\alpha \rangle_{\phi, u, t} =& -2 \langle F_\alpha \nabla_x \cdot S(x) \nabla_x F_\alpha \rangle_{\phi, u, t}\\
        &+4t^{-\alpha}\langle \sqrt{F_\alpha F'_\alpha} \gamma^2 \sqrt{F_\alpha F'_\alpha} \rangle_{\phi, u, t} \\
        &+ O(t^{-(1+\mu)\alpha} \lVert \phi \rVert_{L^2_x}\lVert u \rVert_{L^\infty_t L^2_x})
    \end{split}\end{equation*}
    For the term involving time derivatives, we see that
    \begin{equation*}
        \frac{d}{dt} F_\alpha = \frac{|x|}{t^{\alpha +1}} F'_\alpha
    \end{equation*}
    so
    \begin{equation*}\begin{split}
        \langle \frac{d}{dt} F_\alpha \gamma F_\alpha + F_\alpha \gamma \frac{d}{dt} F_\alpha \rangle_{\phi, u, t} =& \frac{1}{t}\langle \frac{|x|}{t^\alpha} F'_\alpha \gamma F_\alpha + F_\alpha \gamma F'_\alpha\frac{|x|}{t^\alpha}\rangle_{\phi,u,t}\\
        =& \frac{1}{t} \langle \frac{|x|}{t^\alpha}(F_\alpha' \gamma F_\alpha + F_\alpha \gamma F_\alpha')\rangle_{\phi,u,t}
        + \frac{1}{t^{1+\alpha}}\langle F_\alpha F_\alpha' \rangle_{\phi,u,t}\\
        =& \frac{2}{t} \langle \frac{|x|}{t^\alpha}\sqrt{F_\alpha F_\alpha'} \gamma \sqrt{F_\alpha F_\alpha'}\rangle_{\phi,u,t} + O(t^{-1-\alpha}\lVert \phi \rVert_{L^2} \lVert u \rVert_{L^\infty_t H^1_x})
    \end{split}\end{equation*}
    Now, by~\Cref{cor:F-gamma-F-PRES}, we see that $t^{-\alpha/2} \gamma \sqrt{F_\alpha F_\alpha'} u \in L^2_{t,x}$ for finite energy solutions $u$, so for any $T > 1$
    \begin{equation*}\begin{split}
        \int_1^T \left|\langle \frac{d}{dt} F_\alpha \gamma F_\alpha + F_\alpha \gamma \frac{d}{dt} F_\alpha \rangle_{\phi, u, t}\right| \;dt \lesssim&
        \int_1^T t^{\alpha/2 - 1} \lVert \phi \rVert_{L^2_x} \lVert t^{-\alpha/2} \gamma \sqrt{F_\alpha F'_\alpha} u \rVert_{L^2_x} \;dt + \lVert \phi \rVert_{L^2_x} \lVert u \rVert_{L^\infty_t L^2_x}\\
        \lesssim& \lVert \phi \rVert_{L^2} \lVert u \rVert_{L^\infty_t H^1_x}
    \end{split}\end{equation*}
    with an implicit constant independent of $T$.  Thus, integrating in~\eqref{eqn:crooked-PRES} shows that
    \begin{equation*}\begin{split}
        \int_1^T \langle -2 F_\alpha \nabla_x \cdot S(x) \nabla_x F_\alpha +  4t^{-\alpha}\sqrt{F_\alpha F'_\alpha} \gamma^2 \sqrt{F_\alpha F'_\alpha} \rangle_{\phi,u,t}\;dt =& \langle F_\alpha \gamma F_\alpha \rangle_{\phi,u,T} - \langle F_\alpha \gamma F_\alpha \rangle_{\phi,u,1}\\
        &+ O(\lVert \phi \rVert_{L^2} \lVert u \rVert_{L^\infty_t H^1_x}) 
    \end{split}\end{equation*}
    uniformly in $T$, which is~\eqref{eqn:crooked-PRES-conc-1}.  Furthermore, the $O(\lVert \phi \rVert_{L^2} \lVert u \rVert_{L^\infty_t H^1_x})$ term converges as $T \to \infty$, which gives~\eqref{eqn:crooked-PRES-conc-2}.
\end{proof}

\section{Sublinear spreading for weakly bound states}\label{sec:sublin-spread}

We will now prove~\Cref{thm:uwl-sublin-spreading} by modifying the argument from~\Cref{sec:better-rate}.  The main idea is that we can replace $|x|$ with a truncated weight and obtain estimates that are uniform in the truncation.  More specifically, for $R > 0$, define the function $G_R(r) = R G\left(\frac{r}{R}\right)$, where $G$ is a smoothing, increasing function satisfying
\begin{equation*}
    G(r) = \begin{cases}
        r & r \leq \frac{1}{2}\\
        1 & r \geq 2
    \end{cases}
\end{equation*}
For later use, we record the following lemma:
\begin{lemma}\label[lemma]{lem:GR-vanishing}
    For any $f \in L^2$,
    \begin{equation}
        \lim_{R \to \infty} \frac{1}{R}\lVert G_R(|x|) f(x) \rVert_{L^2} = 0 
    \end{equation}
\end{lemma}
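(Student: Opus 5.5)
This is a dominated convergence argument. The idea is to bound $G_R(|x|)/R$ pointwise by something that is uniformly bounded and tends to $0$ at each fixed $x$.

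First, record two facts about $G$. It is smooth and increasing with $G(0)=0$, $G(r)=r$ for $r\le 1/2$, and $G(r)=1$ for $r\ge 2$. Hence $G$ is bounded on $[0,\infty)$, say $0\le G\le C_G$; if $G$ is increasing and equal to $1$ beyond $2$, one may take $C_G=1$. Also $G(r)\to 0$ as $r\to 0^+$, since $G(r)=r$ near the origin.

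Next, write
\begin{equation*}
    \frac{1}{R}\lVert G_R(|x|) f \rVert_{L^2}^2 \cdot \frac{1}{R} = \int_{\bbR^3} G\left(\frac{|x|}{R}\right)^2 |f(x)|^2\;dx .
\end{equation*}
For each fixed $x$, we have $|x|/R \to 0$ as $R\to\infty$. Once $R\ge 2|x|$, the integrand equals $(|x|/R)^2|f(x)|^2$, which tends to $0$. For every $R$ the integrand is dominated by $C_G^2|f(x)|^2$, and this function lies in $L^1$ because $f\in L^2$. Dominated convergence therefore gives that the integral tends to $0$, which is the claim.

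There is no real obstacle. The only point to check is boundedness of $G$, which follows from its definition. If one prefers an explicit quantitative version, split the integral into $|x|\le M$, where it is at most $(M/R)^2\lVert f\rVert_{L^2}^2$, and $|x|>M$, where it is at most $C_G^2\lVert f\rVert_{L^2(|x|>M)}^2$. Choose $M$ large first and then $R$ large.
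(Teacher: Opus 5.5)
Your proof is correct and is essentially the paper's argument. The paper likewise notes that $R^{-1}G_R(|x|) = G(|x|/R)$ is bounded by $1$ and tends pointwise to $0$ as $R \to \infty$, and then applies dominated convergence.
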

\begin{proof}
    Since $\frac{1}{R} G_R$ is bounded by $1$ and decreases pointwise to $0$ as $R \to \infty$, this follows from the dominated convergence theorem.
\end{proof}

We now claim that the following propagation estimate holds for weakly bound states:
\begin{proposition}\label[proposition]{prop:uwl-GR-bound}
    For any weakly bound state $\uwl$ and $\alpha > \beta$, we have that
    \begin{equation}
        \langle F_\alpha G_R(|x|) F_\alpha \rangle_t = \langle F_\alpha G_R(|x|) F_\alpha \rangle_1 + o_{R \to \infty}(t) + o_{t \to \infty}(t)
    \end{equation}
\end{proposition}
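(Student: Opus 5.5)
The plan is to imitate the proof of \Cref{thm:optimal-spread-theorem} with the bounded weight $G_R(|x|)$ in place of $|x|$. First we differentiate $\langle F_\alpha G_R F_\alpha\rangle_t$ along the flow. Since $[V, G_R F_\alpha^2]=0$, the only contributions come from the Laplacian commutator and from $\partial_t F_\alpha$:
\begin{equation*}
    \frac{d}{dt}\langle F_\alpha G_R F_\alpha\rangle_t = \langle F_\alpha \gamma_R F_\alpha\rangle_t + \langle [-i\Delta, F_\alpha] G_R F_\alpha + F_\alpha G_R [-i\Delta,F_\alpha]\rangle_t - 2\alpha\Big\langle \frac{|x|}{t^{\alpha+1}} G_R F'_\alpha F_\alpha\Big\rangle_t .
\end{equation*}
Here $\gamma_R = \frac12(G_R'(|x|)\gamma+\gamma G_R'(|x|)) = [-i\Delta, G_R]$ up to sign conventions, and $G_R'$ is bounded by $1$ and supported in $|x|\le 2R$. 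The last term is nonpositive, since $F$ is increasing, and can be discarded for an upper bound. For a lower bound it is $O(t^{-1}\cdot\min(R,t^\alpha))$, because on $\operatorname{supp}F'_\alpha$ we have $|x|\sim t^\alpha$; it therefore integrates to $o(t)$.

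The two cross terms from $[-i\Delta,F_\alpha]$ can be regrouped using \eqref{eqn:F-alpha-lap-comm-id} into the single term $2t^{-\alpha}\langle \sqrt{F_\alpha F'_\alpha}\,(G_R\gamma+\gamma G_R)\sqrt{F_\alpha F'_\alpha}\rangle_t$, plus errors of size $O(t^{-2\alpha}\min(R,t^\alpha))$ that are integrable or $o(t)$. For this regrouped term we use Cauchy--Schwarz. Since $G_R\lesssim t^\alpha$ on $\operatorname{supp}F'_\alpha$, it is bounded by
\[
\|\gamma\sqrt{F_\alpha F'_\alpha}\,\uwl\|_{L^2_x}\,\min(R,t^\alpha)\,t^{-\alpha}\lesssim \|\gamma\sqrt{F_\alpha F'_\alpha}\,\uwl\|_{L^2_x}.
\]
By \eqref{eqn:F-gamma-F-PRES-inf} this function is in $L^2_t$, so after time integration it contributes $O(t^{1/2}) = o(t)$. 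We need the $H^1$ bound on $\uwl$ here, which holds in the setting of \Cref{prop:u-redecomp}; in general we would first approximate by data with finite energy.

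The main term is $\int_1^t\langle F_\alpha\gamma_R F_\alpha\rangle_s\,ds$, and this is where both \Cref{prop:rad-mom-vanishes} and \Cref{lem:GR-vanishing} enter. We split
\[
F_\alpha\gamma_RF_\alpha = F_\alpha\gamma F_\alpha - \tfrac12\big(F_\alpha(1-G_R')\gamma F_\alpha + F_\alpha\gamma(1-G_R')F_\alpha\big).
\]
\begin{itemize}
\item The first piece tends to $0$ by \Cref{prop:rad-mom-vanishes}, so its time integral is $o_{t\to\infty}(t)$.
\item The second piece is supported in $|x|\ge R/2$. It is bounded by $\|\nabla \uwl\|_{L^2}\,\|\mathbf 1_{|x|\gtrsim R}\,\uwl(s)\|_{L^2}$ plus $O(R^{-1})$ from commutators. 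We need this to be $o_{R\to\infty}(1)$ uniformly in $s$ in order to get the $o_{R\to\infty}(t)$ error.
\end{itemize}
This uniformity is the main obstacle: tightness of $\uwl(s)$ at scale $R$ uniformly in time is close to what we are trying to prove. I would try to avoid it in one of two ways.
\begin{itemize}
\item Apply \Cref{prop:rad-mom-vanishes} with a bounded modified radial momentum $\frac12(G_R'\gamma+\gamma G_R')$ in place of $\gamma$. The wave-operator argument of \Cref{sec:ext-F-gamma-F-WO} goes through verbatim for this operator, since its commutator with $-\Delta$ still has the same positive symmetric structure up to $O(R^{-2})$ errors. That gives $\langle F_\alpha\gamma_RF_\alpha\rangle_t\to0$ for each fixed $R$ directly.
\item Alternatively, handle the joint limit by a density argument: write $\uwl(0)=g+h$ with $\|h\|_{L^2}$ small and $g$ having the moments of \Cref{thm:better-rate}. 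For $g$, \Cref{thm:better-rate} gives growth $t^{\alpha'}$ with $\alpha'<1$. For the $h$ piece, the trivial bound $G_R\le \min(R,t)$ together with \Cref{lem:GR-vanishing} gives a contribution of $o_{R\to\infty}(R)$, which is $o(R+t)$. Here one must check that $U_D(0,t)$ applied to the $g$-evolution is still weakly null up to a small error.
\end{itemize}
Integrating the differential inequality from $1$ to $t$ and collecting the $o(t)$ and $o_{R\to\infty}(t)$ contributions then gives the claimed identity.
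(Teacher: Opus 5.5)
\textbf{There is a genuine gap in the main term.} Your handling of the $\partial_tF_\alpha$ term and of the cross terms from $[-i\Delta,F_\alpha]$ is fine. Cauchy--Schwarz plus the propagation estimate is a valid substitute for the $K(r,t)$ argument of \Cref{prop:single-gamma-PRES}, provided you use the estimate in its true form, $s^{-\alpha/2}\gamma\sqrt{F_\alpha F_\alpha'}\,\uwl\in L^2_{s,x}$ (the positive term in the proof of \Cref{thm:optimal-spread-theorem} carries $s^{-\alpha}$). That gives $O(t^{(1+\alpha)/2})=o(t)$ rather than $O(t^{1/2})$.

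The problem is $\int_1^t\langle F_\alpha\gamma_RF_\alpha\rangle_s\,ds$. You correctly see that control uniform in $R$ is needed, but neither workaround supplies it.
\begin{itemize}
\item Your first option is vacuous. Since $G_R'=0$ on $|x|\ge 2R$ and $F_\alpha=0$ on $|x|\le t^\alpha$, we have $F_\alpha\gamma_RF_\alpha\equiv 0$ once $t^\alpha>2R$, so vanishing for each fixed $R$ is automatic. The application needs $R$ comparable to $t$ (e.g.\ $R=\epsilon t$), where $F_\alpha\gamma_RF_\alpha$ does not vanish for any $s\in[1,t]$. A weak-convergence argument gives no uniformity over an $R$-dependent family. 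Nor does the argument go through ``verbatim'': $[-i\Delta,\gamma_R]$ contains $2\gamma G_R''\gamma$. That term is quadratic in $\gamma$, of size $R^{-1}$, and of unfavorable sign, not an $O(R^{-2})$ error, and it is exactly the term that needs a separate argument.
\item Your second option fails because \Cref{thm:better-rate} needs $U(t,0)g$ to be \emph{exactly} weakly bound. A generic approximant with moments is not, and density of moment-bearing data inside $\ker\Omega_D^*$ is not available. Running the proof of \Cref{thm:optimal-spread-theorem} with ``weakly null up to a small error'' would require $\limsup_t|\langle F_\alpha\gamma F_\alpha\rangle_t|$ to be small along $U(t,0)g$. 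That in turn needs uniform-in-time $H^1$ control of $U(t,0)h$, or of $U(t,0)g$, which is needed anyway to run \Cref{prop:rad-mom-vanishes} for it. $L^2$-smallness of $h$ says nothing about the unbounded $\gamma$, and $H^1$-smallness at $t=0$ does not propagate beyond the exponential bound of \Cref{lem:L2-H1-bd}.
\end{itemize}

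\textbf{The missing idea} is to trade uniformity in $R$ for a statement at a single $R$-dependent time.
\begin{itemize}
\item Since $F_\alpha\gamma_RF_\alpha$ vanishes for $s\ge(2R)^{1/\alpha}$, the paper writes $\langle F_\alpha\gamma_RF_\alpha\rangle_t=-\int_t^{(2R)^{1/\alpha}}\frac{d}{ds}\langle F_\alpha\gamma_RF_\alpha\rangle_s\,ds$.
\item In the derivative, the transverse term $-4F_\alpha\nabla\cdot(G_R'S)\nabla F_\alpha$ and the boundary term $4s^{-\alpha}\sqrt{F_\alpha F_\alpha'}\,\gamma G_R'\gamma\sqrt{F_\alpha F_\alpha'}$ are nonnegative. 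The boundary term absorbs the $\partial_sF_\alpha$ contribution up to $O(s^{\alpha-2})$. The potential, $G_R^{(4)}$ and double-commutator terms are perturbative.
\item The only bad term, $\gamma(F_\alpha G_R''F_\alpha)\gamma$, lives on $|x|\sim R$ with $|G_R''|\lesssim R^{-1}$. After reproducing with $\tilde F_R$, its integral over $[0,(2R)^{1/\alpha}]$ is controlled by the propagation estimate for $\tilde F_R\gamma\tilde F_R$, giving $\langle\tilde F_R\gamma\tilde F_R\rangle_{(2R)^{1/\alpha}}-\langle\tilde F_R\gamma\tilde F_R\rangle_0+O(R^{1/\alpha-1-\mu})$.
\item Both endpoint values are $o_{R\to\infty}(1)$. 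At $s=0$ this is dominated convergence. At $s=(2R)^{1/\alpha}$ one has $R\sim s^\alpha$, so the quantity is exterior radial momentum at a single time tending to infinity, handled by a variant of \Cref{prop:rad-mom-vanishes}.
\end{itemize}
This yields the one-sided bound $\langle F_\alpha\gamma_RF_\alpha\rangle_t\le o_{R\to\infty}(1)+O(t^{\alpha-1}+t^{1-(1+\mu)\alpha})$, with constants independent of $R$. Integrating gives the claim, and only this upper bound is needed because $\langle F_\alpha G_RF_\alpha\rangle_t\ge 0$.
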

Let us assume that~\Cref{prop:uwl-GR-bound} holds.  By~\Cref{lem:GR-vanishing}, we have that
\begin{equation*}
    \langle F_\alpha G_R(|x|) F_\alpha \rangle_1 = o_{R \to \infty}(R)
\end{equation*}
In particular, for any $\epsilon > 0$, there exist $R_0(\epsilon)$ and $T_0(\epsilon)$ such that for $R \geq R_0(\epsilon)$ and $t \geq T_0(\epsilon)$,
\begin{equation*}
    |\langle F_\alpha G_R(|x|) F_\alpha \rangle_t| \leq \epsilon^2(R + t)
\end{equation*}
By taking $R = \epsilon t$ for $t \geq T(\epsilon) = \max(\epsilon^{-1}R_0(\epsilon), \epsilon^{-\frac{2}{1-\alpha}}, T_0(\epsilon))$, we find that
\begin{equation*}
    |\langle F_\alpha G_{\epsilon t}(|x|) F_\alpha \rangle_t| \leq 2\epsilon^2 t
\end{equation*}
On the other hand, from the definition of $F_\alpha$ we see at once that
\begin{equation*}
    |\langle G_{\epsilon t}(|x|) (1-F_\alpha^2) \rangle_t| \lesssim t^{\alpha} \lesssim \epsilon^2 t 
\end{equation*}
Applying Markov's inequality now shows that
\begin{equation*}
    \left\lVert F\left(\frac{|x|}{\epsilon t} \geq 1 \right) F_\alpha \uwl(x,t) \right\rVert_{L^2_x}^2 \lesssim \frac{1}{\epsilon t}\langle F_\alpha G_{\epsilon t}(|x|) F_\alpha \rangle_t \lesssim \epsilon
\end{equation*}
In particular, the sublinear spreading of $\uwl$ will follow once we prove~\Cref{prop:uwl-GR-bound}.  Defining
\begin{equation}\label{eqn:gammaR-def}
    \gamma_R := \frac{1}{2}[-i\Delta, G_R(|x|)] = \frac{1}{2}\left( G_R'(|x|) \gamma + \gamma G_R'(|x|) \right)
\end{equation}
we see that
\begin{equation}\label{eqn:Heis-deriv-FGF}
    \frac{d}{dt} \langle F_\alpha G_R F_\alpha \rangle_t = 2\langle F_\alpha \gamma_R F_\alpha \rangle_t + 4 t^{-\alpha} \langle \sqrt{G_R(|x|) F_\alpha F'_\alpha} \gamma \sqrt{G_R(|x|) F_\alpha F'_\alpha} \rangle_t - 2\alpha \left\langle F_\alpha F'_\alpha \frac{|x|G_R}{t^{\alpha +1}} \right\rangle_t 
\end{equation}
Since $G_R(|x|) \lesssim |x|$ (uniformly in $R$), we see at once that
\begin{equation}\label{eqn:uwl-time-deriv-2}
    \left|\left\langle F_\alpha F'_\alpha \frac{|x|G_R}{t^{\alpha +1}} \right\rangle_t \right| \lesssim t^{\alpha - 1} = o_{t \to \infty}(t)
\end{equation}
Although the middle term has an operator norm that is $O(1)$ in time (or $O(R/t^\alpha)$ if we relax the requirement that our bounds be uniform in $R$), we can obtain a better time-integrated bound by using propagation estimates:
\begin{proposition}\label[proposition]{prop:single-gamma-PRES}
    For any time $t > 1$,
    \begin{equation}\label{eqn:single-gamma-PRES}
        \int_1^t s^{-\alpha} \langle \sqrt{G_R(|x|) F_\alpha F'_\alpha} \gamma \sqrt{G_R(|x|) F_\alpha F'_\alpha} \rangle_s \;ds \lesssim t^\alpha
    \end{equation}
    with an implicit constant independent of $R$.
\end{proposition}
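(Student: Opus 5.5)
The plan is a direct Cauchy--Schwarz argument, using that on the support of $F'_\alpha$ the weight $G_R(|x|)$ is at most a constant times $t^\alpha$, uniformly in $R$. The needed time integrability then comes from the propagation estimate~\eqref{eqn:F-gamma-F-PRES-inf} of~\Cref{cor:F-gamma-F-PRES}. Throughout, $\uwl$ is taken to be a finite-energy solution, $\uwl \in L^\infty_t H^1_x$, as in~\Cref{prop:u-redecomp}, so that~\eqref{eqn:F-gamma-F-PRES-inf} applies.

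\textbf{Step 1 (Cauchy--Schwarz).} Set $B_s = \sqrt{G_R(|x|) F_\alpha F'_\alpha}$ at time $s$. Then
\begin{equation*}
    |\langle B_s \gamma B_s \rangle_s| \leq \lVert B_s \uwl(s) \rVert_{L^2} \, \lVert \gamma B_s \uwl(s) \rVert_{L^2}.
\end{equation*}
On $\supp F'_\alpha$ we have $|x| \sim s^\alpha$, and $G_R(r) \lesssim r$ uniformly in $R$. Hence the first factor satisfies $\lVert B_s \uwl \rVert_{L^2} \lesssim s^{\alpha/2}$.

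\textbf{Step 2 (commute the weight past $\gamma$).} Write $B_s = \sqrt{G_R}\,\sqrt{F_\alpha F'_\alpha}$ and commute $\sqrt{G_R(|x|)}$ through $\gamma$:
\begin{equation*}
    \gamma B_s = \sqrt{G_R}\,\gamma\sqrt{F_\alpha F'_\alpha} - i\,\partial_r\bigl(\sqrt{G_R}\bigr)\sqrt{F_\alpha F'_\alpha}.
\end{equation*}
Since $G_R' \lesssim 1$ and $G_R(r) \gtrsim \min(r,R)$, we get $|\partial_r \sqrt{G_R}| = G_R'/(2\sqrt{G_R}) \lesssim s^{-\alpha/2} + R^{-1/2}$ on $\{|x| \sim s^\alpha\}$, so the commutator term is $O(1)$ in operator norm. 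For the main term, insert a fattened cutoff $\tilde F_s$, equal to $1$ on $\supp F'_\alpha$ and supported where $|x| \lesssim s^\alpha$. Then
\begin{equation*}
    \sqrt{G_R}\,\gamma\sqrt{F_\alpha F'_\alpha} = \sqrt{G_R}\,\tilde F_s\,\gamma\sqrt{F_\alpha F'_\alpha} + \sqrt{G_R}\,[\gamma,\tilde F_s]\sqrt{F_\alpha F'_\alpha}.
\end{equation*}
The second term vanishes identically, because $[\gamma,\tilde F_s] = -i\,\partial_r\tilde F_s$ is supported where $\tilde F_s$ is non-constant, while $\tilde F_s \equiv 1$ on $\supp F'_\alpha$. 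Since $\sqrt{G_R}\,\tilde F_s = O(s^{\alpha/2})$ uniformly in $R$, we obtain
\begin{equation*}
    \lVert \gamma B_s \uwl \rVert_{L^2} \lesssim s^{\alpha/2}\,\lVert \gamma\sqrt{F_\alpha F'_\alpha}\,\uwl \rVert_{L^2} + 1.
\end{equation*}

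\textbf{Step 3 (integrate in time).} Combining Steps 1 and 2,
\begin{equation*}
    \int_1^t s^{-\alpha}|\langle B_s\gamma B_s\rangle_s|\,ds \lesssim \int_1^t \lVert \gamma\sqrt{F_\alpha F'_\alpha}\,\uwl(s) \rVert_{L^2}\,ds + \int_1^t s^{-\alpha/2}\,ds.
\end{equation*}
By Cauchy--Schwarz in time and~\eqref{eqn:F-gamma-F-PRES-inf} (with $t_0 = 1$), the first integral is at most $t^{1/2}\lVert \uwl \rVert_{L^\infty_t H^1_x}$. The second integral is $O(t^{1-\alpha/2})$. From~\Cref{tab:constants}, $\alpha > \beta > \frac{3}{2+2\mu} \geq \frac34$, so both $\tfrac12 \leq \alpha$ and $1 - \tfrac{\alpha}{2} \leq \alpha$ hold. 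Both contributions are therefore $\lesssim t^\alpha$, with constants independent of $R$, which gives~\eqref{eqn:single-gamma-PRES}.

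\textbf{Main obstacle.} The delicate step is Step 2: moving the unbounded weight $\sqrt{G_R}$ across $\gamma$ without losing uniformity in $R$. Two points need checking. First, the derivative $\partial_r\sqrt{G_R}$ must stay bounded on the annulus $|x| \sim s^\alpha$; this uses $G_R(r) = r$ for small $r/R$ and $G_R \sim R$ otherwise, so $G_R$ is bounded below there by a multiple of $\min(s^\alpha, R)$. Second, the localization $\tilde F_s$ requires $\sqrt{F'_\alpha}$ to be smooth, which is the assumption on $F$ made in the notation section.
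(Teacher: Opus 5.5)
\textbf{There is a genuine gap in Step 3.} There you use \eqref{eqn:F-gamma-F-PRES-inf} as an \emph{unweighted} $L^2_t$ bound. As printed, \Cref{cor:F-gamma-F-PRES} drops a time weight. In the proof of \Cref{thm:optimal-spread-theorem} the positive term comes from $[-i\Delta,F_\alpha]=t^{-\alpha}(F'_\alpha\gamma+\gamma F'_\alpha)$, so what that argument actually gives is
\begin{equation*}
    \int_1^\infty s^{-\alpha}\lVert \gamma\sqrt{F_\alpha F'_\alpha}\,u(s)\rVert_{L^2_x}^2\,ds \lesssim \lVert u\rVert_{L^\infty_t H^1_x}^2 .
\end{equation*}
This is also how the paper itself uses the corollary in \Cref{sec:ext-F-gamma-F-WO} (``$t^{-\alpha/2}\gamma\sqrt{F_\alpha F'_\alpha}u\in L^2_{t,x}$'').

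The unweighted version cannot hold as stated. Already for $V=0$, take a normalized wave packet of speed $v\ll 1$. It sits in the annulus $|x|\sim s^\alpha$ for times $s\sim v^{-1/(1-\alpha)}$, with radial momentum $\sim v$. It therefore contributes about $v^{2-1/(1-\alpha)}$ to the unweighted integral, which blows up as $v\to 0$ because $\alpha>1/2$.

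If you feed the correct weighted estimate into Step 3, Cauchy--Schwarz in time only gives
\begin{equation*}
    \int_1^t \lVert \gamma\sqrt{F_\alpha F'_\alpha}\,\uwl(s)\rVert_{L^2_x}\,ds \le \Bigl(\int_1^t s^{\alpha}\,ds\Bigr)^{1/2}\Bigl(\int_1^t s^{-\alpha}\lVert \gamma\sqrt{F_\alpha F'_\alpha}\,\uwl(s)\rVert_{L^2_x}^2\,ds\Bigr)^{1/2} \lesssim t^{\frac{1+\alpha}{2}} .
\end{equation*}
Since $\alpha<1$, this is strictly worse than $t^\alpha$. It is still $o(t)$, which is enough for \Cref{prop:uwl-GR-bound}, but it does not prove the stated bound.

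Your Steps 1 and 2 are fine. In Step 2 you can drop the $R^{-1/2}$ term: on $\supp G_R'$ one has $G_R(r)\geq r/4$, so $|\partial_r\sqrt{G_R}|\lesssim s^{-\alpha/2}$ on the annulus. The loss comes from taking absolute values in Step 1, which throws away the structure of the integrand.

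\textbf{The missing idea} is that the integrand is an exact Heisenberg derivative, so no propagation estimate is needed. Set
\begin{equation*}
    K(r,s)=s^{-\alpha}\int_0^r F\bigl(\tfrac{u}{s^\alpha}\geq 1\bigr)F'\bigl(\tfrac{u}{s^\alpha}\geq 1\bigr)G_R(u)\,du,
\end{equation*}
and $w=\sqrt{\partial_r K}=s^{-\alpha/2}\sqrt{G_R F_\alpha F'_\alpha}$. Since $[\gamma,w]=-i\partial_r w$ commutes with $w$, you have $\gamma w^2+w^2\gamma=2w\gamma w$. Since $K$ commutes with $V$, it follows that
\begin{equation*}
    s^{-\alpha}\langle \sqrt{G_R F_\alpha F'_\alpha}\,\gamma\,\sqrt{G_R F_\alpha F'_\alpha}\rangle_s=\tfrac12\langle \gamma\,\partial_r K+\partial_r K\,\gamma\rangle_s=\tfrac12\frac{d}{ds}\langle K(|x|,s)\rangle_s-\tfrac12\langle \partial_s K(|x|,s)\rangle_s .
\end{equation*}
Using $G_R(r)\lesssim r$ and $\supp F'_\alpha\subset\{|x|\sim s^\alpha\}$, one gets $0\le K\lesssim s^\alpha$ and $|\partial_s K|\lesssim s^{\alpha-1}$, uniformly in $R$. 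Integrating from $1$ to $t$ then gives $O(t^\alpha)$.

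This is the paper's proof. Besides giving the sharp power, it uses only conservation of $L^2$, so it needs neither your finite-energy assumption on $\uwl$ nor weak boundedness.
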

\begin{proof}
    Let us define a function $K(r,t)$ by
    \begin{equation*}
        \left\{\begin{array}{c}
            K(0,t) = 0\\
            \partial_r K(r,t) = F\left( \frac{r}{t^\alpha}\right) F'\left( \frac{r}{t^\alpha}\right) \frac{G_R(r)}{t^\alpha}
        \end{array}\right.
    \end{equation*}
    Evidently, $K$ is positive and increasing in $r$ for fixed $t$. Additionally, since $|G_R(r)| \lesssim r$ and $F'\left( \frac{r}{t^\alpha}\right)$ is supported in the region $r \sim t^\alpha$, we have that
    \begin{equation*}
        \lim_{r \to \infty} K(r,t) \lesssim t^\alpha
    \end{equation*}
    Moreover,
    \begin{equation*}\begin{split}
        |\partial_t K(r,t)| =& \left|\partial_t \int_0^r \partial_r K(u,t)\;du\right|\\
                            =& \left| \partial_t \int_0^r F\left( \frac{u}{t^\alpha}\right) F'\left( \frac{u}{t^\alpha}\right) \frac{G_R(u)}{t^\alpha} \;du \right|\\
                            \lesssim& t^{\alpha - 1}
    \end{split}\end{equation*}
    Thus, we have that
    \begin{equation*}
        \frac{d}{dt} \left\langle K(|x|, t) \right\rangle_t = \langle \gamma \partial_r K(|x|,t) + \partial_r K(|x|,t) \gamma \rangle_t + O(t^{\alpha-1})
    \end{equation*}
    Now, 
    \begin{equation*}
        \langle \gamma \partial_r K(r,t) + \partial_r K(r,t) \gamma \rangle_t = 2t^{-\alpha} \langle \sqrt{G_R F_\alpha F'_\alpha} \gamma \sqrt{G_R F_\alpha F'_\alpha} \rangle_t
    \end{equation*}
    so integrating gives that
    \begin{equation*}\begin{split}
        \int_1^t \langle s^{-\alpha}\sqrt{G_R F_\alpha F'_\alpha} \gamma \sqrt{G_R F_\alpha F'_\alpha} \rangle_s\;ds =& \frac{1}{2} \left( \langle K(|x|,t) \rangle_t - \langle K(|x|, 1) \rangle_1 \right) + O(t^{\alpha})\\
        =& O(t^\alpha) \qedhere
    \end{split}\end{equation*}
\end{proof}
In particular, both~\eqref{eqn:uwl-time-deriv-2} and~\eqref{eqn:single-gamma-PRES} are compatible with~\Cref{prop:uwl-GR-bound}, so it only remains to obtain acceptable bounds for the leading term $2\langle F_\alpha \gamma_R F_\alpha \rangle_t$.  Since $\gamma_R f$ vanishes for $f$ supported outside a ball of radius $2R$, we see at once that
\begin{equation}\label{eqn:F-gamma-R-F-int}
    \langle F_\alpha \gamma_R F_\alpha \rangle_t = - \int_t^{(2R)^{1/\alpha}} \frac{d}{ds} \langle F_\alpha \gamma_R F_\alpha \rangle_s \;ds
\end{equation}
We compute that
\begin{equation*}\begin{split}
    \frac{d}{dt} \langle F_\alpha \gamma_R F_\alpha \rangle_t =& \langle F_\alpha [-i\Delta, \gamma_R] F_\alpha \rangle_t+ \langle [-i\Delta, F_\alpha] \gamma_R F_\alpha + F_\alpha \gamma_R [-i\Delta, F_\alpha] \rangle_t\\
    &+ \langle F_\alpha [iV, \gamma_R] F_\alpha \rangle_t - \alpha \left\langle \frac{|x|}{t^{\alpha+1}} F'_\alpha \gamma_R F_\alpha + F_\alpha \gamma_R F'_\alpha \frac{|x|}{t^{\alpha+1}}\right \rangle_t 
\end{split}
\end{equation*}
For the commutator with the potential,~\eqref{eqn:gammaR-def} immediately shows that
\begin{equation*}
    \langle F_\alpha [iV, \gamma_R] F_\alpha \rangle_t = O(t^{-(1+\mu)\alpha})
\end{equation*}
For the terms involving $[-i\Delta, F_\alpha]$, we symmetrize with~\Cref{lem:three-term-sym} to find that
\begin{equation*}\begin{split}
    [-i\Delta, F_\alpha] \gamma_R F_\alpha + F_\alpha \gamma_R [-i\Delta, F_\alpha] = 2t^{-\alpha}\sqrt{F_\alpha F'_\alpha} (\gamma \gamma_R + \gamma_R \gamma) \sqrt{F_\alpha F'_\alpha} + O(t^{-3\alpha})
\end{split}
\end{equation*}
Using~\eqref{eqn:gammaR-def}, we further see that
\begin{equation*}\begin{split}
    2\left(\gamma \gamma_R + \gamma_R \gamma\right) =& 2 \gamma G_R' \gamma + \gamma^2 G_R' + G_R' \gamma^2\\
    =& 4 \gamma G_R' \gamma + \gamma [\gamma, G_R'] + [G_R', \gamma]\gamma\\
    =& 4 \gamma G_R' \gamma + [\gamma, [\gamma, G_R']]\\
    =& 4 \gamma G_R' \gamma - G_R'''
\end{split}\end{equation*}
so,
\begin{equation*}
    2t^{-\alpha}\sqrt{F_\alpha F'_\alpha} (\gamma \gamma_R + \gamma_R \gamma) \sqrt{F_\alpha F'_\alpha} = 4 t^{-\alpha} \sqrt{F_\alpha F'_\alpha} \gamma G_R' \gamma \sqrt{F_\alpha F'_\alpha} - t^{-\alpha} F_\alpha F'_\alpha G_R'''
\end{equation*}
Since $\lVert G_R''' \rVert_{L^\infty} = O(R^{-2})$ and the supports of $F_\alpha$ and $G_R'''$ only intersect for $t^\alpha \lesssim R$, this gives that
\begin{equation*}
    [-i\Delta, F_\alpha] \gamma_R F_\alpha + F_\alpha \gamma_R [-i\Delta, F_\alpha] = 4 t^{-\alpha} \sqrt{F_\alpha F'_\alpha} \gamma G_R' \gamma \sqrt{F_\alpha F'_\alpha} + O(t^{-3\alpha})
\end{equation*}
For the term involving time derivatives of $F_\alpha$, we use Cauchy-Schwarz and the Cauchy inequality to find that
\begin{equation*}\begin{split}
    \left| \alpha \left\langle \frac{|x|}{t^{\alpha+1}} F'_\alpha \gamma_R F_\alpha + F_\alpha \gamma_R F'_\alpha \frac{|x|}{t^{\alpha+1}}\right \rangle_t \right| \leq& \frac{\alpha}{t^{\alpha + 1}} |\langle  \sqrt{F_\alpha F'_\alpha} \gamma G_R' \sqrt{F_\alpha F'_\alpha}|x| \rangle_t |\\
    &+  \frac{\alpha}{t^{\alpha + 1}} |\langle  |x|\sqrt{F_\alpha F'_\alpha}  G_R'\gamma \sqrt{F_\alpha F'_\alpha} \rangle_t|\\
    \lesssim& \frac{1}{t} \lVert \sqrt{G_R'} \gamma \sqrt{F_\alpha F'_\alpha} \uwl(x,t)\rVert_{L^2} \lVert \sqrt{F_\alpha F'_{\alpha}} \uwl(x,t) \rVert_{L^2}\\
    \leq& 2 t^{-\alpha} \langle \sqrt{F_\alpha F'_\alpha} \gamma G_R' \gamma \sqrt{F_\alpha F'_\alpha} \rangle_t + O(t^{\alpha - 2})
\end{split}\end{equation*}
The most difficult term is the one involving the commutator of the Laplacian with $\gamma_R$.  We compute that
\begin{equation*}\begin{split}
    2 F_\alpha [-i\Delta, \gamma_R] F_\alpha =& F_\alpha [-i\Delta, \gamma] G_R' F_\alpha + F_\alpha G_R' [-i\Delta, \gamma] F_\alpha + F_\alpha \gamma [-i\Delta, G_R'] F_\alpha + F_\alpha [-i\Delta, G_R'] \gamma F_\alpha
\end{split}\end{equation*}
From~\eqref{eqn:wls-symb-comm}, we have that 
\begin{equation*}\begin{split}
    F_\alpha [-i\Delta, \gamma] G_R' F_\alpha + F_\alpha G_R' [-i\Delta, \gamma] F_\alpha =& -2F_\alpha \left(\nabla_x \cdot S(x) \nabla_x G_R' + G_R' \nabla_x \cdot S(x) \nabla_x\right)F_\alpha\\
    =& -4 F_\alpha \nabla_x \cdot (G_R'(|x|) S(x)) \nabla_x F_\alpha + F_\alpha[G_R', \nabla_x] \cdot S(x) \nabla_x F_\alpha\\
    &+ F_\alpha \nabla_x \cdot S(x) [\nabla_x, G_R'(|x|)] F_\alpha\\
    =& -4 F_\alpha \nabla_x \cdot (G_R'(|x|) S(x)) \nabla_x F_\alpha
\end{split}\end{equation*}
where the last line follows from the fact that $\nabla_x G_R'(|x|)$ is purely radial and $S(x)$ projects onto transverse directions.  In particular, these terms have a favorable sign in~\eqref{eqn:F-gamma-R-F-int}, so we will focus on the other two terms.  Here, we see that
\begin{equation*}\begin{split}
    \gamma[-i\Delta, G_R'] + [-i\Delta, G_R']\gamma =& \gamma^2 G_R'' + 2 \gamma G_R'' \gamma + G_R'' \gamma^2\\
    =& 4 \gamma G_R'' \gamma - G_R^{(4)}
\end{split}\end{equation*}
Since the supports of $G_R$ and $F_\alpha$ only overlap for $t^\alpha < 2R$ and $\lVert G_R^{(4)} \rVert_{L^\infty_x} \sim R^{-3}$, we see that
\begin{equation*}\begin{split}
    \int_t^\infty \langle F_\alpha G_R^{(4)} F_\alpha \rangle_s\;ds =& \int_t^{(2R)^{1/\alpha}} O(R^{-3}) \;ds\\
                                                \lesssim& O(R^{1/\alpha - 3})\\
                                                =& o_{R \to \infty}(1)
\end{split}\end{equation*}
On the other hand, the leading term $4 \langle F_\alpha \gamma G_R'' \gamma F_\alpha \rangle_t$ has an unfavorable sign and is too large to be handled perturbatively.  Instead, we will control this term using a combination of propagation estimates and the decay of $\langle F_\alpha \gamma F_\alpha \rangle_t$.  To begin, we reorder the operators to obtain
\begin{equation*}\begin{split}
    2F_\alpha \gamma G_R'' \gamma F_\alpha  = 2 \gamma (F_\alpha G_R'' F_\alpha) \gamma + [\gamma, G_R''[\gamma, F_\alpha]]F_\alpha + F_\alpha [\gamma, G_R''[\gamma, F_\alpha]]
\end{split}\end{equation*}
The double commutator terms are supported in the region $R \sim t^\alpha$, and have size $O(R^{-3})$.  Thus, they can be treated perturbatively in the same manner as $\langle F_\alpha G^{(4)}_R F_\alpha \rangle_t$.  For the remaining term, let $\tilde{F}_R = \tilde{F}\left( \frac{|x|}{R} \geq 1\right)$ be a smooth cut-off function such that $\tilde{F}(r\geq 1)\tilde{F}'(r \geq 1) = 1$ on the support of $G''$. Reproducing, we find that
\begin{equation}\label{eqn:GR-reprod}\begin{split}
    \langle \gamma (F_\alpha G_R'' F_\alpha) \gamma \rangle_t =& \langle \sqrt{\tilde{F}_R \tilde{F}_R'} \gamma (F_\alpha G_R'' F_\alpha) \gamma \sqrt{\tilde{F}_R \tilde{F}_R'} \rangle_t\\
    \lesssim& \frac{1}{R}\left\lVert \gamma \sqrt{\tilde{F}_R \tilde{F}_R'} \uwl(x,t) \right\rVert_{L^2_x}^2\\
    \lesssim& \frac{1}{R} \left\langle \sqrt{\tilde{F}_R \tilde{F}_R'} \gamma^2 \sqrt{\tilde{F}_R \tilde{F}_R'} \right\rangle_t
\end{split}\end{equation}
This may be controlled using a propagation estimate.  Indeed, considering the observable $\tilde{F}_R \gamma \tilde{F}_R$ and arguing as in~\eqref{eqn:F-gamma-F-Heis}, we see that
\begin{equation}\label{eqn:FR-gamma-FR-Heis}\begin{split}
    \frac{d}{dt} \langle \tilde{F}_R \gamma \tilde{F}_R \rangle_t =& \langle \tilde{F}_R [-i\Delta, \gamma] \tilde{F}_R \rangle_t + \langle \tilde{F}_R [iV, \gamma] \tilde{F}_R \rangle_t\\
    &+ \langle [-i\Delta, \tilde{F}_R] \gamma \tilde{F}_R + \tilde{F}_R \gamma [-i\Delta, \tilde{F}_R]\rangle_t
\end{split}
\end{equation}
By the same reasoning used to obtain~\eqref{eqn:wls-symb-comm}, we see $\tilde{F}_R [-i\Delta, \gamma] \tilde{F}_R$ is positive semi-definite, so the first term in~\eqref{eqn:FR-gamma-FR-Heis} has a favorable sign.  Using the decay hypothesis for $V$, we see that
\begin{equation*}
    |\langle \tilde{F}_R [iV, \gamma] \tilde{F}_R \rangle_t| \lesssim R^{-1-\mu}
\end{equation*}
For the final term, we argue as in~\eqref{eqn:wls-bdy-comm} to find that
\begin{equation*}
    \langle [-i\Delta, \tilde{F}_R] \gamma \tilde{F}_R + \tilde{F}_R \gamma [-i\Delta, \tilde{F}_R]\rangle_t = \frac{4}{R} \langle \sqrt{\tilde{F}_R\tilde{F}'_R} \gamma^2 \sqrt{\tilde{F}_R\tilde{F}'_R} \rangle_t + O(R^{-3})
\end{equation*}
In particular, integrating in~\eqref{eqn:FR-gamma-FR-Heis} and recalling~\eqref{eqn:GR-reprod} yields
\begin{equation*}\begin{split}
    \int_t^\infty |\langle \gamma (F_\alpha G_R'' F_\alpha) \gamma \rangle_s|\;ds \leq& \int_0^{(2R)^{1/\alpha}} |\langle \gamma (F_\alpha G_R'' F_\alpha) \gamma \rangle_s| \;ds\\
    \lesssim& \int_0^{(2R)^{1/\alpha}} \frac{4}{R} \langle \sqrt{\tilde{F}_R\tilde{F}'_R} \gamma^2 \sqrt{\tilde{F}_R\tilde{F}'_R} \rangle_s \;ds\\
    \lesssim& \langle \tilde{F}_R \gamma \tilde{F}_R \rangle_{(2R)^{1/\alpha}} - \langle \tilde{F}_R \gamma \tilde{F}_R \rangle_{0} + O(R^{\frac{1}{\alpha} - 1 - \mu})
\end{split}\end{equation*}
Now, by dominated convergence $\langle \tilde{F}_R \gamma \tilde{F}_R \rangle_0 = o_{R \to \infty}(1)$.  On the other hand, by modifying the reasoning used to prove that $\langle F_\alpha \gamma F_\alpha \rangle_t \to 0$, we see that
\begin{equation*}
    \langle \tilde{F}_R \gamma \tilde{F}_R \rangle_{(2R)^{1/\alpha}} = \left.\left\langle \tilde{F}\left(\frac{|x|}{(2t)^\alpha} \geq 1\right) \gamma \tilde{F}\left(\frac{|x|}{(2t)^\alpha} \geq 1\right)\right\rangle_t \right|_{t = (2R)^{1/\alpha}} = o_{R \to \infty}(1)
\end{equation*}
which completes the proof of~\Cref{prop:uwl-GR-bound}.

\noindent{\textbf{Acknowledgements}: }A. S. was partially supported by NSF grant DMS-2205931. X.W. was supported by the ARC Australian Laureate Fellowship grant FL220100072.
\providecommand{\MR}{\relax\ifhmode\unskip\space\fi MR }
% \MRhref is called by the amsart/book/proc definition of \MR.
\providecommand{\MRhref}[2]{%
  \href{http://www.ams.org/mathscinet-getitem?mr=#1}{#2}
}
\providecommand{\href}[2]{#2}

\end{document}